\theoremstyle{definition}
\newtheorem{theorem}{Theorem}
\newtheorem{lemma}{Lemma}[section]
\newtheorem{remark}[lemma]{Remark}
\newtheorem{proposition}[lemma]{Proposition}
\newtheorem{definition}[lemma]{Definition}
\newtheorem{corollary}[lemma]{Corollary}
\newcommand{\periodafter}[1]{#1.}
\titleformat{\subsection}[runin]
{\normalfont\bfseries}{\thesubsection}{0.5em}{\periodafter}
\numberwithin{equation}{section}
\renewcommand{\thelemma}{\arabic{section}.\arabic{lemma}}
\renewcommand{\thesection}{\arabic{section}.}
\renewcommand{\thesubsection}{\arabic{section}.\arabic{subsection}.}
\newcounter{RomanNumber}
\newcommand{\MyRoman}[1]{\setcounter{RomanNumber}{#1}\Roman{RomanNumber}}
\title{\textsf{Maximal Subalgebras for Modular  Graded Lie Superalgebras of Odd Cartan Type }}
\author{\textsc{Wende Liu$^{1,2}$\footnote{Correspondence: wendeliu@ustc.edu.cn}\setcounter{footnote}{-1}\footnote{Supported by the NSF of China (11171055) and the NSF of the Education Department of HLJ Province (12521158).} and Qi Wang$^1$}\\
{\small \textit{$^1$Department of Mathematics, Harbin Institute of Technology,}}\\
\small \textit{Harbin 150006, China} \\
 \small\textit{$^2$School of Mathematical Sciences, Harbin Normal University}\\
 \small\textit{Harbin 150025, China}}
\date{ }
\begin{document}
\maketitle
\begin{quotation}
\small\noindent \textbf{Abstract}: The purpose of this paper is to determine
all the maximal graded subalgebras of the
four infinite families of finite-dimensional  graded Lie superalgebras
of odd Cartan type over an algebraically closed field of prime characteristic $p>3$.
All the maximal graded subalgebras consist of
three types (\MyRoman{1}),  (\MyRoman{2})  and  (\MyRoman{3}).
In addition,  the maximal graded subalgebras of type (\MyRoman{3})
consist of the maximal graded R-subalgebras and the  maximal graded S-subalgebras.
In this paper we classify  all the maximal graded subalgebras for modular  Lie superalgebras of odd Cartan type.
We also determine the number of conjugacy classes, representatives of conjugacy classes
and the dimensions for the maximal graded R-subalgebras and the maximal graded subalgebras of type  (\MyRoman{2}).
Here, determining the maximal graded S-subalgebras is reduced
to determining the maximal subalgebras of classical Lie superalgebras $\mathfrak{p}(n)$.

\vspace{0.2cm} \noindent{\textbf{Keywords}}: Lie superalgebra of
odd Cartan type; Maximal subalgebra

\vspace{0.2cm} \noindent{\textbf{Mathematics Subject Classification 2010}}: 17B50, 17B05, 17B66, 17B70
\end{quotation}

\setcounter{section}{-1}
\section{Introduction}
The maximal subsystems of an algebraic or geometric system can be used perfectly
to characterize the system.
For example,
the classification problem of primitive transformation groups,
posed by S. Lie in the 19th century,
can be reduced to the determination of certain maximal subgroups in Lie groups.
The classification problem of the  maximal subalgebras (resp. subgroups)
for a given algebra  (resp. group) is therefore a typical one.

This problem has been attracted a great attention and produced some beautiful results.
The early work was the classification of maximal subalgebras (resp. subgroups)
for complex semisimple Lie algebra (resp. classical groups),
which has been developed in a different way by many authors
(cf., for example, E.B. Dynkin \cite{d11,d12}, F.I. Karpelevich \cite{fi}, A.I. Maltsev \cite{ai} and  V.V. Morozov \cite{vv}).
In particular, the papers of Dynkin \cite{d11,d12} are representative,
where the maximal subgroups of some classical groups
and the maximal subalgebras of complex semisimple Lie algebras are classified.
The maximal subalgebras
of  finite-dimensional  central simple algebras were studied by A. Elduque \cite{ae} and M. Racine \cite{mra}.
The classification for maximal subalgebras of associative superalgebras,  Jordan superalgebras and
Lie superalgebras has been developed by many authors
(cf., for example, A. Elduque, J. Laliena and S. Sacristan \cite{alb, alb1}, I. Shchepochkina \cite{is1,is2}).
In particular,
A. Elduque, J. Laliena and S. Sacristan \cite{alb,alb1} determined the maximal subalgebras
of finite-dimensional  central simple superalgebras.
I. Shchepochkina \cite{is1} determined the maximal subalgebras of  linear Lie superalgebras.

However, the classification of
 the maximal subalgebras (resp. subgroups) of  simple Lie (super)algebras over a positive characteristic field
 (resp. algebraic groups) is difficult to solve.
Because of  the great importance of the maximal subalgebras in  the classification theory of modular Lie (super)algebras,
there are several attempts to classify
 the maximal subalgebras of simple modular Lie (super)algebras.
The classification problem of the  maximal subalgebras
for Lie $p$-algebras
and the maximal subgroups for simple algebraic groups over positive characteristic field has been studied by many authors
(cf., for example, M. I. Kuzenesov \cite{mi}, H. Melikyan \cite{HM},
G. M. Seitz \cite{gm1,gm2},  O. Ten \cite{ot} and B. Weisfeiler \cite{bw}).
In particular, Melikyan \cite{HM} classified
the maximal graded subalgebras for modular Lie algebras of Cartan type
$W$, $S$, $H$, $K$ over an algebraically closed field.
Furthermore,   Melikyan's work was generalized to the situation of modular Lie superalgebras of  Cartan type
$W$, $S$, $H$ and $K$  \cite{bai}.

This paper is devoted to classifying
all the maximal graded subalgebras of Lie superalgebras of odd Cartan type $HO$, $SHO$, $KO$ and $SKO$ (for a definition, see Section 1).
Unlike in the case of modular Lie superalgebras of Cartan type $W$, $S$, $H$ and $K$,
the modular Lie superalgebras of odd Cartan type are defined by odd differential forms,
which are analogous  to the infinite-dimensional simple linearly compact Lie superalgebras
$HO$, $SHO$, $KO$ and $SKO$ (see \cite{k2}) over a field of characteristic 0.
Generally speaking, the structures of modular Lie superalgebras of odd Cartan type are  more complicated.
In contrast to other modular Lie superalgebras of Cartan type,
for such a Lie superalgebra, the Lie bracket induces an odd skew supersymmetric, non-degenerate bilinear form on
the $-1$th component of the standard gradation, which is crucial to determine the isomorphic classes of maximal graded subalgebras.
One may easily find a rough classification of the maximal graded subalgebras for Lie superalgebras of odd Cartan type.
Let $L$ be such a Lie superalgebra of the standard gradation
$$
L=L_{-2}+L_{-1}+\cdots
$$
with convention that $L_{-2}=0$ for $L=HO$ or $SHO$.
A nontrivial subalgebra $M$ is called a
\textit{maximal graded subalgebra} ({MGS}) provided that $M$ is
$\mathbb{Z}$-graded and that no nontrivial $\mathbb{Z}$-graded subalgebras  of $L$ contains strictly $M$.
Note that
$L_{-1} $ is an irreducible $L_{0}$-module.
It follows that
$$
\oplus_{i\geqslant0}L_{i} ~\mbox{is an MGS of $L$}.
$$
Clearly, any other MGS, $M$, must fulfill exactly one of the
following conditions:
\begin{eqnarray*}
&\mathrm{(\MyRoman{1})}& M_{-1}=L_{-1} ~\mbox{and}~ M_{0}=L_{0};\\
&\mathrm{(\MyRoman{2})}& M_{-1}~\mbox{is a nontrivial subspace of $L_{-1}$};\\
&\mathrm{(\MyRoman{3})}& M_{-1}=L_{-1}  ~\mbox{but}~ M_{0}\neq L_{0}.
\end{eqnarray*}

The paper is arranged as follows: The first section is to set some basic notions.
In Section 2, we establish a $\mathbb{Z}$-homogeneous automorphism
by the odd skew supersymmetric, non-degenerate bilinear form on
the $-1$th component of the standard gradation and study the weight space decompositions.
In Section 3, we  construct  and determine  all the MGS of type (I).
In Section 4, we first establish  a sufficient and necessary condition for
a graded subalgebra of type (\MyRoman{2}) to be maximal
and then give the number of conjugacy classes, representatives of conjugacy classes
and the dimensions for the maximal subalgebras of type (\MyRoman{2}).
In Section 5, we still first give a sufficient and necessary condition
for a graded subalgebra to be maximal R-subalgebras of type (\MyRoman{3})
and then give  the number of conjugacy classes, representatives of conjugacy classes
and the dimensions of these maximal subalgebras.
Finally,  the classification of the  maximal graded S-subalgebras of type (\MyRoman{3}) for $L$ is reduced to the classification of the maximal subalgebras of classical Lie superalgebras $\mathfrak{p}(n)$.

We close this introduction by introducing the general conventions.
The underlying field $\mathbb{F}$ is an algebraically closed field of prime characteristic $p>3$.
In addition to the standard notation $\mathbb{Z}$,
we write $\mathbb{N}$ and $\mathbb{N}_0$ for
the sets of nonnegative integers, positive integers, respectively.
The field of two elements is denoted by $\mathbb{Z}_{2}=\{\overline{0}, \overline{1}\}.$
For any $m,n\in \mathbb{Z}$, write $\overline{m,n}=\{m,m+1,\ldots, n\}$, if $m\leq n$ and $\overline{m,n}=\emptyset$, otherwise.
Write $\langle X \rangle$ for the ($\mathbb{Z}_{2}$-graded) subalgebra generated by a subset $X$ of a superalgebra.
All vector spaces are finite-dimensional. All the subspaces, subalgebras and submodules are assumed to be $\mathbb{Z}_{2}$-graded
and all the homomorphisms of superalgebras
are both $\mathbb{Z}_{2}$-homogeneous and $\mathbb{Z}$-homogeneous.
For a $\mathbb{Z}_2$-graded vector space $V$ and $x\in V$, the symbol $|x|$ implies that
$x$ is a $\mathbb{Z}_2$-homogeneous element and $|x|$ is the parity.

\section{Lie superalgebras of odd Cartan type}\label{sec1}
In this section we introduce simple   Lie superalgebras of odd Cartan type.
Fix two positive integers $m$ and $n$.
Put
$$
\mathbf{I}_0=\{ 1,2,\ldots,m \},\quad \mathbf{I}_1
 =\{m+1,\ldots,m+n\}\quad \mbox{and}\quad \mathbf{I}=\mathbf{I}_0\cup \mathbf{I}_1.
$$
Write
$$
\mathbf{A}(m)=\{\alpha=(\alpha_{1},\ldots,\alpha_{m})\in\mathbb{N}^{m}\mid
0\leqslant\alpha_{i}\leqslant p-1,i\in\mathbf{I}_{0} \}.
$$
Let $\mathcal {O}(m)$ be the \textit{divided power algebra} with $\mathbb{F}$-basis
$
\{x^{(\alpha)}\mid\alpha\in\mathbf{A}(m)\}
$.
Let $\Lambda(n)$ be the \textit{exterior superalgebra} over $\mathbb{F}$ of $n$ variables
$x_{m+1},x_{m+2},\ldots,x_{m+n}.$
Then the tensor product
$$
\mathcal {O}(m,n)=\mathcal {O}(m)\otimes\Lambda(n)
$$
is an associative superalgebra with respect to the usual $\mathbb{Z}_{2}$-gradation.

Let
$$
\mathbf{B}(n)=\{\langle i_{1},i_{2},\ldots,i_{k}\rangle\mid 0\leqslant  k\leqslant  n;
m+1\leqslant i_{1}<i_{2}<\cdots<i_{k}\leqslant m+n\}
$$
be the set of $k$-shuffles of strictly increasing integers in $\mathbf{I}_1$, $0\leqslant k\leqslant n.$

For $u=\langle i_1,i_2,\ldots,i_k\rangle\in \mathbf{B}(n),$
write $|u|=k$ and $x^{u}=x_{i_{1}}x_{i_{2}}\cdots x_{i_{k}}$ ($x^{\emptyset}=1$).
If $g\in\mathcal {O}(m)$ and $f\in \Lambda(n)$
we simply write $gf$ for $g\otimes f$.
Then $\mathcal {O}(m,n)$ has a $\mathbb{Z}_{2}$-homogeneous $\mathbb{F}$-basis
$$
\{x^{(\alpha)}x^{u}\mid \alpha\in \mathbf{A}(m),u\in
\mathbf{B}(n)\},
$$
which is called the \textit{standard basis} of $\mathcal {O}(m,n)$.

For $i\in \textbf{I}_{0}$ and
$\varepsilon_{i}=(\delta_{i1},\delta_{i2},\ldots,\delta_{im})$,
write $x_{i}$ for
$x^{(\varepsilon_{i})}.$
Let $\partial_{1},\ldots,\partial_{m+n}$
be the \textit{superderivations} of the superalgebra
$\mathcal {O}(m,n)$ such that $\partial_{i}(x_j)=\delta_{ij}.$
The \textit{parity} of $\partial_{i}$ is $|\partial_{i}|=\tau(i)$,
where
$\tau \left( i\right)$ is $\bar{0}$ if $i\in \mathbf{I}_{0}$ and $\bar{1}$ if $i\in \mathbf{I}_{1}$.
Put
$$
W(m,n)=\Big\{ \sum_{i\in \mathbf{I}}a_i \partial_i \mid a_i\in \mathcal{O} ( m,n), i\in \mathbf{I}\Big\}.
$$
Then $W(m,n)=\mathrm{Der}_{\mathbb{F}}\mathcal{O}(m,n)$
is a finite-dimensional  simple Lie superalgebra, called the \textit{generalized Witt superalgebra}.
We simply write $\mathcal{O}$ and $W$ for $\mathcal{O}(m,n)$ and $W(m,n)$, respectively.
The so-called Lie superalgebras of odd Cartan type  are reviewed as follows.

\subsection{Odd Hamiltonian Lie superalgebras}
We introduce the odd Hamiltonian Lie superalgebras.
Suppose $n\geq 3$ and
define a linear operator
$D_{HO}:\mathcal{O}(n,n)\longrightarrow W(n,n)$ such that
$$
D_{HO}(f)=\sum_{i=1}^{2n}(-1)^{\tau(i)|f|}\partial_i(f)\partial_{i'},
$$
where
$'$ is an involution of the index set $\{1,\ldots,2n\}$
sending $i$ to $i+n$ for $i\leq n$.
Obviously, $D_{HO}$ is odd with kernel $\mathbb{F}\cdot 1$, and
$$
[D_{HO}(f), D_{HO}(g)]=D_{HO}(D_{HO}(f)(g)), ~\mbox{for all}~   f, \ g\in \mathcal{O}(n,n).
$$
Write $\bar {\mathcal{O}}(n,n)$ for the quotient superspace $\mathcal{O}(n,n)/\mathbb{F}\cdot 1$. Then
$D_{HO}$ induce a linear operator of $\bar {\mathcal{O}}(n,n)$ into $W(n,n)$, which is denoted still by $D_{HO}$. Note that $HO(n)=\bar {\mathcal O}(n,n)$ is a
finite-dimensional  simple Lie superalgebra with respect to bracket:
$$
[f,g]=D_{HO}(f)(g), ~\mbox{for all}~ f,\  g \in HO(n).
$$
$HO(n)$, or simply $HO$, is called the \textit{odd Hamiltonian Lie superalgebra} \cite{Liu1}.
It is easy to see  that $HO(n)$ has a $\mathbb{Z}$-gradation structure
induced by $\mathrm{zd}(x_i)=-1$, since $\mathrm{zd}(D_{HO})=-2$.

We assemble some conclusions in   \cite[Proposition 1 and Theorem 5]{Liu1} to be the following lemma.
\begin{lemma}\label{1.1}
 The following statements hold:
\begin{enumerate}
\item[(1)]$\dim HO(n)=2^np^n-1$.
\item[(2)]$HO(n)$ is transitive.
\item[(3)]$HO(n)=\langle L_{-1}+L_0+L_1\rangle$.
\item[(4)]$T_{HO}=\mathrm{span}_{\mathbb{F}}\{x_ix_{i'}\mid i\in \overline{1,n}\}$ is a torus of $HO(n)$. \qed
\end{enumerate}
\end{lemma}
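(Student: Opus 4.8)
The plan is to read all four assertions directly off the explicit model $HO(n)=\bar{\mathcal{O}}(n,n)$ with bracket $[f,g]=D_{HO}(f)(g)$, disposing of the cheap items (1), (2), (4) first and isolating (3) as the one place where real work is needed. For (1) I would merely count the standard basis: each exponent $\alpha\in\mathbf{A}(n)$ has $n$ entries ranging over $\{0,\dots,p-1\}$, so $\dim\mathcal{O}(n)=p^{n}$, while $\dim\Lambda(n)=2^{n}$ since each of the $n$ odd variables is either present or absent; hence $\dim\mathcal{O}(n,n)=2^{n}p^{n}$, and passing to the quotient by the one-dimensional $\mathbb{F}\cdot 1$ removes a single basis vector, giving $\dim HO(n)=2^{n}p^{n}-1$.

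The common tool for (2) is the evaluation of the bracket against the degree $-1$ generators $x_j$: using $\partial_{i'}(x_j)=\delta_{i,j'}$ the defining sum for $D_{HO}$ collapses to a single term,
\[
[f,x_j]=(-1)^{\tau(j')|f|}\,\partial_{j'}(f)\qquad(j\in\mathbf{I}).
\]
Transitivity asks that an element of nonnegative degree annihilating $L_{-1}$ vanish, and the displayed identity shows $[f,L_{-1}]=0$ is equivalent to $\partial_k f=0$ for every $k\in\mathbf{I}$. A short argument on the standard basis (distinct monomials have distinct nonzero images under a suitable $\partial_k$, so no nonconstant $f$ is killed by all of them) identifies the common kernel of the $\partial_k$ with $\mathbb{F}\cdot 1$, which is $0$ in $\bar{\mathcal{O}}(n,n)$. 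Since every nonzero $f\in L_i$ with $i\geq 0$ has $\mathcal{O}$-degree $i+2\geq 2$ and is thus nonconstant, transitivity follows.

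For (4) I would compute, straight from the definition, $D_{HO}(x_ix_{i'})=x_{i'}\partial_{i'}-x_i\partial_i$; since $D_{HO}$ is odd and $x_ix_{i'}$ an odd function, this simultaneously records that $x_ix_{i'}$ is an \emph{even} element of $HO(n)$, as a toral element should be. On the standard basis $x^{(\alpha)}x^u$ the operator $x_i\partial_i$ acts by the scalar $\alpha_i$ and $x_{i'}\partial_{i'}$ by $1$ or $0$ according as $i'\in u$ or not, so each $\operatorname{ad}(x_ix_{i'})$ is diagonalizable with eigenvalues in $\mathbb{F}_p$; for distinct $i$ these operators involve disjoint variables, hence commute and annihilate one another. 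Thus $T_{HO}$ is an abelian subalgebra acting semisimply, i.e.\ a torus.

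The substance is (3). Setting $L'=\langle L_{-1}+L_0+L_1\rangle$, which is graded with $L'_i=L_i$ for $i\leq 1$, it suffices to prove $[L_1,L_k]=L_{k+1}$ for all $k\geq 1$ and then induct on $k$. The inclusion $\subseteq$ is immediate from $[L_i,L_j]\subseteq L_{i+j}$; the reverse inclusion is the crux, and I would prove it on the monomial basis. Given $x^{(\alpha)}x^u\in L_{k+1}$, I would split its variables into two complementary groups and exhibit the monomial as the \emph{single surviving term} of a bracket $[\ell,h]$ with $\ell\in L_1$ and $h\in L_k$, the two factors being attached to dual indices $i\leftrightarrow i'$ so that the pairing built into $D_{HO}$ produces exactly this term; choosing the groups to involve distinct variables forces the divided power coefficient to be $1$. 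The hard part will be the bookkeeping in the degenerate cases --- pure divided powers $x_i^{(a)}$ and monomials supported on too few variables --- where one must instead peel off a single variable through its dual index and verify that the surviving coefficient (a binomial such as $\alpha_i$, invertible precisely because $1\leq\alpha_i\leq p-1$) is nonzero modulo $p$. Once $[L_1,L_k]=L_{k+1}$ is secured, the induction yields $L'=HO(n)$.
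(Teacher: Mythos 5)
Your four arguments are correct, but it is worth noting that the paper does not prove this lemma at all: it is assembled verbatim from cited results of Liu--Zhang--Wang (\cite{Liu1}, Proposition 1 and Theorem 5), so your proposal is a self-contained substitute rather than a parallel to an argument in the text. Items (1), (2) and (4) are complete as you state them: the collapse $[f,x_j]=(-1)^{\tau(j')|f|}\partial_{j'}(f)$ is the right computation for transitivity (the common kernel of all $\partial_k$ is $\mathbb{F}\cdot 1$, which dies in $\bar{\mathcal{O}}(n,n)$, and elements of $L_i$ with $i\geq 0$ have $\mathcal{O}$-degree $i+2\geq 2$), and $D_{HO}(x_ix_{i'})=x_{i'}\partial_{i'}-x_i\partial_i$ does act diagonally on the standard basis with eigenvalues in the prime field, giving an abelian, ad-semisimple (and even, since $D_{HO}$ is odd) subalgebra. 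The only place where your write-up falls short of a proof is (3): you correctly identify that everything reduces to the surjectivity $[L_1,L_k]=L_{k+1}$ for $k\geq 1$, but the monomial-by-monomial construction --- in particular the degenerate cases of pure divided powers $x_i^{(a)}$ and of monomials whose support makes the ``single surviving term'' device collide with a vanishing binomial coefficient mod $p$ --- is exactly where the content of \cite{Liu1} lies, and you defer it rather than execute it. Strictly speaking one only needs $L_{k+1}=\sum_{i+j=k+1}[L'_i,L'_j]$, which is weaker than $[L_1,L_k]=L_{k+1}$ and gives you more room in the awkward top-degree components; either way, carrying out that case analysis (or simply citing \cite{Liu1} as the paper does) is what would complete the argument.
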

\subsection{Special odd Hamiltonian Lie superalgebras}
We introduce the special odd Hamiltonian Lie superalgebras.
Suppose $n\geq 3$ and
let $\mathrm{div}: W(n,n)\longrightarrow\mathcal{O}(n,n)$ be the linear operator, called the \textit{divergence}, such that
$$\mathrm{div}(f_r\partial_r)=(-1)^{|\partial_r||f_r|}\partial_r(f_r).$$
Note that $\mathrm{div}$ is an even superderivation of $W(n,n)$ into module $\mathcal{O}(n,n)$.
The kernel of $\mathrm{div}$ is a $\mathbb{Z}$-graded subalgebra of $W(n,n)$, whose
derived algebra is a simple Lie superalgebra,
which is called the \textit{special superalgebra}.
Recall that the odd Laplacian operator $\Delta=\sum_{i=1}^{n}\partial_{i}\partial_{i'}.$
Then
\begin{eqnarray*}
 SHO'(n)=\{a\in \bar{\mathcal{O}}(n,n)\mid\mathrm{div}\; \mathrm{D_{HO}}(a)=0\}=\{a\in \bar{\mathcal{O}}(n,n)\mid\Delta(a)=0\}
\end{eqnarray*}
is a sub Lie superalgebra of $HO(n)$, whose derived algebra is denoted by $\overline{SHO}(n)$.
The two-step derived algebra of $SHO'(n)$, denoted by $SHO(n)$, or simply $SHO$, is called the
\textit{special odd Hamiltonian  Lie superalgebra} \cite{lh}.
$SHO$ has a $\mathbb{Z}$-gradation structure induced by
$\mathrm{zd}(x_i)=-1$.
We assemble some conclusions in \cite[Theorems 4.1 and 4.7]{lh} to be the following lemma.
\begin{lemma}\label{1.2}
Let $L=SHO(n)$, then the following statements hold:
\begin{enumerate}
\item[(1)] The following formulas hold:
\begin{align}
\dim L'&=\sum^{n}_{l=2}\bigg((2^{n-1}-2^{n-l})\sum_{(i_1,\ldots,i_l)\in\mathbf{J}(l)}\prod_{k=1}^l\pi_{i_k}\bigg)+(p+1)^n-1;\nonumber\\
\dim  \bar L&=\sum^{n}_{l=2}\bigg((2^{n-1}-2^{n-l})\sum_{(i_1,\ldots,i_l)\in\mathbf{J}(l)}\prod_{k=1}^l\pi_{i_k}\bigg)+(p+1)^n-2^n-1;\nonumber\\
\dim   L&=\sum^{n}_{l=2}\bigg((2^{n-1}-2^{n-l})\sum_{(i_1,\ldots,i_l)\in\mathbf{J}(l)}\prod_{k=1}^l\pi_{i_k}\bigg)+(p+1)^n-2^n-2,\nonumber
\end{align}
where $\mathbf{J}(l)=\{(i_1,\ldots,i_l)\mid 1\leq i_1<\cdots< i_l\leq n\}$ and $\pi_i=p-1$, $i\in\overline{1,n}$.
\item[(2)]$L$ is transitive.
\item[(3)]$L=\langle L_{-1}+L_0+L_1\rangle$.
\item[(4)]$T_{SHO}=\mathrm{span}_{\mathbb{F}}\{x_ix_{i'}-x_{i+1}x_{(i+1)'}\mid i\in \overline{1,n-1}\}$ is a torus of $L$. \qed
\end{enumerate}
\end{lemma}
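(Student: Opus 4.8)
The plan is to treat the four assertions in turn, the real content lying in the dimension formulas~(1). The organizing observation is that the odd Laplacian $\Delta=\sum_{i=1}^{n}\partial_i\partial_{i'}$ is a square-zero differential on $\mathcal{O}(n,n)$. Indeed, since each $\partial_i$ is even and each $\partial_{i'}$ is odd, a short sign check gives $\Delta^2=0$, and writing $\Delta_i=\partial_i\partial_{i'}$ one sees that the $\Delta_i$ are odd, pairwise super-anticommuting and individually square-zero. Hence $(\mathcal{O}(n,n),\Delta)$ is the total complex of the tensor product over $i\in\overline{1,n}$ of the one-pair complexes $(\mathcal{O}(1)\otimes\Lambda(1),\Delta_i)$, and by definition $SHO'(n)=\ker\Delta/\mathbb{F}\cdot1$.

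First I would compute the one-pair homology by hand. On the basis $\{x_i^{(a)},\,x_i^{(a)}x_{i'}\mid 0\le a\le p-1\}$ one has $\Delta_i(x_i^{(a)})=0$ and $\Delta_i(x_i^{(a)}x_{i'})=\pm x_i^{(a-1)}$, so the image has dimension $p-1$, the kernel dimension $p+1$, and the homology is two-dimensional, represented by $x_i^{(p-1)}$ and $x_{i'}$. The Künneth formula then gives total homology of dimension $2^n$, and combining this with $\Delta^2=0$ and rank–nullity forces
\[
\dim\ker\Delta=\tfrac12\big(2^np^n+2^n\big)=2^{n-1}(p^n+1),
\]
so that $\dim L'=2^{n-1}(p^n+1)-1$. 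A binomial rearrangement, using $\sum_{(i_1,\ldots,i_l)\in\mathbf{J}(l)}\prod_{k=1}^{l}\pi_{i_k}=\binom{n}{l}(p-1)^l$, checks that this agrees with the first displayed formula.

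To obtain $\dim\bar L$ and $\dim L$ I would analyse the derived series of $SHO'(n)$ directly, using that its bracket is inherited from $HO(n)$. The task is to pin down exactly which homogeneous elements fail to lie in $[SHO'(n),SHO'(n)]$, and then in its derived algebra; the expected outcome is that passing to $\overline{SHO}(n)$ deletes a $2^n$-dimensional space and passing to $SHO(n)$ deletes one further element, reproducing the decrements $2^n$ and $1$ among the three formulas. Isolating these top-degree and trace-like components, and thereby producing the summation indexed by $\mathbf{J}(l)$, is where I expect the main obstacle to lie, since the clean homological computation only delivers $SHO'(n)$.

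For (2) and (3) I would reduce everything to $HO(n)$. The degree $-1$ component is spanned by the linear elements $x_1,\ldots,x_{2n}$, all annihilated by $\Delta$ and surviving into the derived algebras, so the degree $-1$ parts of $SHO(n)$ and $HO(n)$ coincide; hence if $x\in L_i$ with $i\ge0$ satisfies $[x,L_{-1}]=0$, then $x$ kills the degree $-1$ part of $HO(n)$ and transitivity of $HO(n)$ (Lemma~\ref{1.1}(2)) yields $x=0$, proving (2). For (3) I would show $[L_1,L_k]=L_{k+1}$ for $k\ge1$ by induction on $k$, the surjectivity being a divided-power computation parallel to the one behind Lemma~\ref{1.1}(3). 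Finally, for (4) put $h_i=x_ix_{i'}-x_{i+1}x_{(i+1)'}$. Since $\Delta(x_jx_{j'})$ is a nonzero scalar, each $h_i$ lies in $\ker\Delta$, and one checks that the $h_i$ are produced by brackets and so lie in $L$; they commute, and as $D_{HO}(x_jx_{j'})=x_{j'}\partial_{j'}-x_j\partial_j$ is the (signed) Euler operator of the $j$-th variable pair, every standard monomial $x^{(\alpha)}x^u$ is a simultaneous eigenvector for $\mathrm{ad}\,T_{SHO}$. Thus each $\mathrm{ad}\,h_i$ is semisimple and $T_{SHO}$ is a torus; its maximality follows by matching $\dim T_{SHO}=n-1$ with the rank of the degree-zero subalgebra $L_0$, one less than the rank $n$ of the corresponding torus of $HO(n)$ in Lemma~\ref{1.1}(4).
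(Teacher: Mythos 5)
The paper offers no proof of this lemma at all: it is assembled verbatim from \cite[Theorems 4.1 and 4.7]{lh}, so any argument you give is necessarily a different route. Your homological computation of $\dim SHO'(n)$ is correct and rather more illuminating than a brute-force monomial count: the $\Delta_i=\partial_i\partial_{i'}$ are indeed odd, square-zero and pairwise super-anticommuting, the one-pair homology is $2$-dimensional (represented by $x_i^{(p-1)}$ and $x_{i'}$), K\"unneth gives total homology $2^n$, and the Euler-characteristic/rank--nullity step yields $\dim\ker\Delta=2^{n-1}(p^n+1)$. I checked that this matches the first displayed formula via
\[
\sum_{l=2}^{n}\bigl(2^{n-1}-2^{n-l}\bigr)\binom{n}{l}(p-1)^l
=2^{n-1}p^n+2^{n-1}-(p+1)^n,
\]
so $\dim L'=2^{n-1}(p^n+1)-1$ as required.

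The genuine gap is exactly where you flag it: the second and third formulas. You assert only the ``expected outcome'' that $\overline{SHO}$ has codimension $2^n$ in $SHO'$ and that $SHO$ drops one further dimension, but you do not identify the elements that die, and the clean homological argument gives no handle on the derived series. The missing content is supplied (again by citation) in Lemma \ref{3.5} of this paper: the complement of $\overline{SHO}$ in $SHO'$ is spanned by the monomials $x^{(\alpha)}x^u$ with $\mathbf{I}(\alpha,u)=\widetilde{\mathbf{I}}(\alpha,u)=\emptyset$, i.e.\ those for which each index $i$ satisfies either ($\alpha_i=0$ and $i'\in u$) or ($\alpha_i=p-1$ and $i'\notin u$) --- exactly $2^n$ monomials --- and $SHO$ differs from $\overline{SHO}$ only by the one-dimensional top component $\overline{SHO}_{\xi-4}$. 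Without proving these two facts (which is real work, essentially Propositions 3.1 and 3.4 of \cite{lh}), two of the three dimension formulas remain unestablished. Parts (2)--(4) are fine in outline: the reduction of transitivity to $HO$ via $L_{-1}=HO_{-1}$ is sound once one knows the $x_j$ survive into the two-step derived algebra, and for (4) note that the lemma only claims $T_{SHO}$ is a torus, so your closing remarks about maximality are not needed, though you should actually exhibit the $h_i$ as brackets rather than asserting it.
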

\subsection{Odd Contact Lie superalgebras}
We introduce the odd Contact Lie superalgebras.
Suppose $n\geq3$, and define a linear operator
$D_{KO}:\mathcal{O}(n,n+1)\longrightarrow W(n,n+1)$ such that
$$
D_{KO}(f)=D_{HO}(f) + (-1)^{|f|}\partial_{2n+1}(f)\frak{D} +(\frak{D}(f)-2f) \partial_{2n+1},
$$
where $\frak{D}=\sum_{i=1}^{2n}x_i\partial_i$ and
  $D_{HO}(f)=\sum_{i=1}^{2n}(-1)^{\tau(i)|f|}\partial_i(f)\partial_{i'}$, $f\in \mathcal{O}(n,n+1)$.
Note that for $f\in \mathcal{O}(n,n+1)$ we write $|f|$ for the $\mathbb{Z}_2$-parity of $f$ in $\mathcal{O}(n,n+1)$.
Obviously, $D_{KO}$ is odd.
One sees that
$KO(n)=\mathcal{O}(n,n+1)$, or simply $KO$, is a
finite-dimensional  simple Lie superalgebra, with the bracket:
$$
[f,g]=D_{KO}(f)(g) - (-1)^{|f|}2 \partial_{2n+1}(f)(g),  ~\mbox{for all}~ f,\  g \in KO(n),
$$
called the \textit{odd Contact   Lie superalgebra} \cite{FJZ}.
$KO(n)$ has a $\mathbb Z$-gradation induced by $\mathrm{zd}(1)=-2$,
$\mathrm{zd}(x_i)=-1+\delta_{i,2n+1}$.
As for $HO(n)$, we write down the following lemma assembled from \cite[Theorem 3.1 and Corrollary 3.1]{FJZ}.
\begin{lemma}\label{th:1.2}
The following statements hold:
\begin{enumerate}
\item[(1)]$\dim KO(n)=2^{n+1}p^n$.
\item[(2)]$KO(n)$ is transitive.
\item[(3)]$KO(n)=\langle L_{-1}+L_0+L_1\rangle$.
\item[(4)]$T_{KO}=\mathrm{span}_{\mathbb{F}}\{x_ix_{i'}, x_{2n+1}\mid i\in \overline{1,n}\}$ is a  torus of $KO(n)$.  \qed
\end{enumerate}
\end{lemma}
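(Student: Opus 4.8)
The plan is to treat $KO(n)=\mathcal{O}(n,n+1)$ with its standard basis $\{x^{(\alpha)}x^u\}$ and the $\mathbb{Z}$-grading in which $\mathrm{zd}(x^{(\alpha)}x^u)$ is the total weight (each $x_i$ with $i\in\overline{1,2n}$ weighted $1$, and $x_{2n+1}$ weighted $2$) minus $2$, so that $L_{-2}=\mathbb{F}\cdot 1$, $L_{-1}=\mathrm{span}_{\mathbb{F}}\{x_i\mid i\in\overline{1,2n}\}$ and $L_0=\mathrm{span}_{\mathbb{F}}\{x_ax_b,\,x_{2n+1}\mid a,b\in\overline{1,2n}\}$. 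Assertion (1) is then a dimension count: as a vector space $\mathcal{O}(n,n+1)=\mathcal{O}(n)\otimes\Lambda(n+1)$ with $\dim\mathcal{O}(n)=p^n$ (each exponent $\alpha_i$ ranges over $\overline{0,p-1}$) and $\dim\Lambda(n+1)=2^{n+1}$; since, unlike $HO(n)$, no quotient by $\mathbb{F}\cdot 1$ is taken, $\dim KO(n)=2^{n+1}p^n$.

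For transitivity (2) I would take a $\mathbb{Z}$-homogeneous $f$ with $\mathrm{zd}(f)\ge 0$ and $[f,L_{-1}]=0$ and deduce $f=0$. For $i\in\overline{1,2n}$ the identities $\mathfrak{D}(x_i)=x_i$ and $\partial_{2n+1}(x_i)=0$ collapse the contact corrections, giving
$$[f,x_i]=(-1)^{\tau(i')|f|}\partial_{i'}(f)-(-1)^{|f|}\partial_{2n+1}(f)\,x_i.$$
Writing $f=f_0+x_{2n+1}f_1$ with $f_0,f_1$ free of $x_{2n+1}$ and setting all these brackets to zero, the $x_{2n+1}$-divisible part forces $\partial_k(f_1)=0$ for every $k\le 2n$, so $f_1$ is a scalar, while the $x_{2n+1}$-free part yields $\partial_k(f_0)=\pm f_1\,x_{k'}$ for all $k$. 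Comparing the mixed partials $\partial_j\partial_k(f_0)$ and using $p>3$, the sign pattern carried by the odd non-degenerate pairing behind $D_{HO}$ is inconsistent unless $f_1=0$, whence $f_0$ is a scalar; the constraint $\mathrm{zd}(f)\ge 0$ then forces $f=0$.

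For the generation statement (3) I would show by induction on $d$ that $L_d=[L_1,L_{d-1}]$ for all $d\ge 2$; together with transitivity and the fact that $L_{-1}$ generates $L_{-2}+L_{-1}$, this gives $KO(n)=\langle L_{-1}+L_0+L_1\rangle$. The inductive step amounts to exhibiting each standard basis monomial $x^{(\alpha)}x^u$ of degree $d$ as a bracket $[p,q]$ with $p\in L_1$ and $q\in L_{d-1}$: one strips off a single degree-$1$ factor and checks, via the explicit $D_{KO}$-action, that the bracket reattaches it, the only subtlety being the divided-power identity $x_i^{(a)}x_i^{(b)}=\binom{a+b}{a}x_i^{(a+b)}$ together with the contact terms.

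For (4) I would verify directly that the $x_ix_{i'}$ and $x_{2n+1}$ all lie in $L_0$, are linearly independent, and commute pairwise, by computing $[x_ix_{i'},x_jx_{j'}]=0$ and $[x_ix_{i'},x_{2n+1}]=0$; thus $T_{KO}$ is an abelian subalgebra. I would then show each standard basis monomial $x^{(\alpha)}x^u$ is a simultaneous eigenvector for $\mathrm{ad}(x_ix_{i'})$ and $\mathrm{ad}(x_{2n+1})$, the eigenvalues recording the multiplicities of $x_i,x_{i'}$ and $x_{2n+1}$; since every element of $T_{KO}$ therefore acts semisimply on $KO(n)$, $T_{KO}$ is a torus. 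The main obstacle is (3): the induction $L_d=[L_1,L_{d-1}]$ requires a careful, monomial-by-monomial computation in which the corrections $(-1)^{|f|}\partial_{2n+1}(f)\mathfrak{D}$ and $(\mathfrak{D}(f)-2f)\partial_{2n+1}$ and the binomial coefficients from the divided powers must be controlled so that none of the coefficients used vanishes modulo $p$ at the step in question.
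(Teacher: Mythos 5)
The paper does not actually prove this lemma: it is assembled verbatim from the cited reference [FJZ, Theorem~3.1 and Corollary~3.1], so there is no internal argument to compare yours against. Judged on its own terms, your direct verification of (1), (2) and (4) is sound. The dimension count in (1) is immediate since $KO(n)=\mathcal{O}(n,n+1)$ is not quotiented by $\mathbb{F}\cdot 1$. For (2), your formula $[f,x_i]=(-1)^{\tau(i')|f|}\partial_{i'}(f)-(-1)^{|f|}\partial_{2n+1}(f)x_i$ is correct, and the ``sign inconsistency'' you invoke is exactly the following: if $f_1=c\neq 0$, then matching $\partial_{i'}(f_0)$ against $-(-1)^{\tau(i')|f|}(-1)^{|f|}c\,x_i$ forces $f_0$ to contain $c\,x_ix_{i'}$ with coefficient $+1$ when read from the even index $i\leq n$ and $-1$ when read from the odd index $i'$, which is impossible; so $f_1=0$, $f_0$ is a scalar, and $\mathrm{zd}(f)\geq 0$ kills $f$. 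For (4), the monomials $x^{(\alpha)}x^u$ are indeed simultaneous eigenvectors, with $\mathrm{ad}(x_ix_{i'})$ acting by $\delta_{i'\in u}-\alpha_i$ and $\mathrm{ad}(x_{2n+1})$ acting by $-\mathrm{zd}(x^{(\alpha)}x^u)$.

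The genuine gap is in (3). You reduce everything to the claim $L_d=[L_1,L_{d-1}]$ for $d\geq 2$ and then describe the inductive step only as ``strip off a degree-one factor and check that the bracket reattaches it,'' flagging the divided-power binomials and the contact corrections as the obstacle. But that obstacle is the entire content of the statement: a single bracket $[u,v]$ with $u\in L_1$ produces a linear combination of monomials, not the target monomial alone, and the coefficient of the target can vanish mod $p$ (e.g.\ brackets that raise $\alpha_i$ from $p-2$ to $p$ are not even available, and ones passing through $\binom{a+b}{a}\equiv 0$ contribute nothing), so one must order the monomials, prove a triangularity statement, and route around the degenerate exponents --- separately for the $x_{2n+1}$-free monomials and for those divisible by $x_{2n+1}$, where the term $(\mathfrak{D}(f)-2f)\partial_{2n+1}$ changes the bookkeeping. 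As written, (3) is a plan rather than a proof; to make it self-contained you would need to carry out that computation, or else do what the paper does and cite [FJZ] for it.
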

\subsection{Special odd Contact Lie superalgebras}
We introduce the special odd Contact Lie superalgebras.
Suppose $n\geq3$, and for any $\lambda\in \mathbb{F}$, let
$$
\mathrm{div}_{\lambda}(u)=(-1)^{|u|}2\left(\Delta
\left(u\right)+\left(\frak{D}-n\lambda\mathrm{id}_{\mathcal{O}}\right)
\partial_{2n+1}\left(u\right)\right), \quad u\in \mathcal{O}(n,n+1).
$$
The kernel of $\mathrm{div}_{\lambda}$ is a sub Lie superalgebra of $KO(n)$,
denoted by $SKO''(n,\lambda)$,  whose two-step derived algebra is
simple, called  the \textit{special odd Contact   Lie superalgebra} \cite{ly1},
denoted by $SKO(n, \lambda)$ or simply, $SKO$. Note that $SKO(n,\lambda)$ has a
$\mathbb{Z}$-grading structure induced by $\mathrm{zd}(1)=-2$ and
$\mathrm{zd}(x_i)=-1+\delta_{i,2n+1}$.
Put $\pi_i=p-1$, for $i\in\overline{1,n}$.
We assemble some conclusions in \cite[Theorems 3.1 and 3.2]{ly1} to be the following lemma:
\begin{lemma}
Let $L=SKO(n,\lambda)$. Then the following statements hold:
\begin{enumerate}
\item[(1)]
\begin{align}
\dim L&=2\left(\sum_{l=2}^n\bigg(\big(2^{n-1}-2^{l-1}\big)\sum_{(i_1,i_2\ldots, i_l)\in \mathbf{J}(l)}\prod_{c=1}^l\pi_{i_c}\bigg)+\prod_{j=1}^n(\pi_j+2)\right)\nonumber\\
    &-\sum_{k_i\in\frak{G}_2(\lambda,n)}\binom{n}{k_i}-2^n-\delta'_{n\lambda,-1}\nonumber
\end{align}
where
\begin{align}
&\mathbf{J}(l)=\{(i_1,\ldots,i_l)\mid 1\leq i_1<\cdots<i_l\leq n)\}, \mathbf{J}(0)=\emptyset,\nonumber\\
&\frak{G}_2(\lambda,n)=\{k\in \overline{0,n}\mid n\lambda-n+2k+2=0\in \mathbb{F}\}\nonumber
\end{align}
and
$$\delta'_{n\lambda, -1}=
\begin{cases}
1, &  n\lambda+1=0 ~\mbox{in}~ \mathbb{F},\\
0, &  n\lambda+1\neq0 ~\mbox{in}~ \mathbb{F}.
\end{cases}$$

\item[(2)]$L$ is transitive.
\item[(3)]$L=\langle L_{-1}+L_0+L_1\rangle$.
\item[(4)]$T_{SKO}=\mathrm{span}_{\mathbb{F}}\{x_ix_{i'}-x_{i+1}x_{(i+1)'},  x_{2n+1}+n\lambda x_1x_{1'}\mid i\in \overline{1,n-1}\}$ is a  torus of $L$.  \qed
\end{enumerate}
\end{lemma}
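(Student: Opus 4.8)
My plan is to treat the dimension formula (1) as the substantial part and to obtain (2)--(4) from the explicit bracket on $\mathcal{O}(n,n+1)$ together with the corresponding properties of the ambient $KO(n)$ recorded in Lemma~\ref{th:1.2}. Throughout I fix the standard monomial basis, writing each basis element as $x^{(\alpha)}x^{u}x_{2n+1}^{\,b}$ with $u$ a shuffle of indices in $\overline{n+1,2n}$ and $b\in\{0,1\}$; its $\mathbb{Z}$-degree is $|\alpha|+|u|+2b-2$, so that each homogeneous component of $KO(n)$ is spanned by an explicit finite set of monomials on which $\Delta$, $\frak{D}$ and $\partial_{2n+1}$ act combinatorially.

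For (1) I would first compute $\dim SKO''(n,\lambda)=\dim\ker\mathrm{div}_{\lambda}$. Up to the invertible factor $(-1)^{|u|}2$, the operator $\mathrm{div}_{\lambda}$ equals $\Delta+(\frak{D}-n\lambda\,\mathrm{id})\partial_{2n+1}$, so writing an element as $u_{0}+x_{2n+1}u_{1}$ with $u_{0},u_{1}$ free of $x_{2n+1}$ splits the kernel condition (up to signs) into $\Delta u_{1}=0$ together with $\Delta u_{0}+(\frak{D}-n\lambda)u_{1}=0$; the latter is solvable for $u_{0}$ exactly when $(\frak{D}-n\lambda)u_{1}\in\mathrm{im}\,\Delta$, which can be analyzed degree by degree by rank--nullity for $\Delta$ on each homogeneous piece. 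This yields $\dim SKO''$, and then the formula follows by passing to the two-step derived algebra $SKO''(n,\lambda)\supseteq(SKO''(n,\lambda))'\supseteq(SKO''(n,\lambda))''=SKO(n,\lambda)$ and identifying, component by component, exactly which monomials are annihilated at each step. The correction terms $-\sum_{k_i\in\frak{G}_2(\lambda,n)}\binom{n}{k_i}$, $-2^{n}$ and $-\delta'_{n\lambda,-1}$ should emerge as the spans of those monomials whose bracket behaviour degenerates precisely when the $\lambda$-dependent scalars $n\lambda-n+2k+2$ or $n\lambda+1$ vanish in $\mathbb{F}$.

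For (2) and (3) I would use that $SKO(n,\lambda)$ is a $\mathbb{Z}$-graded subalgebra of $KO(n)$ with the same $(-1)$-component $L_{-1}=\mathrm{span}_{\mathbb{F}}\{x_{1},\ldots,x_{2n}\}$ (immediate from the degree computation, since these monomials are killed by $\mathrm{div}_{\lambda}$ and persist in the derived series). Transitivity then descends: for $x\in L_{i}$ with $i\ge 0$ and $[x,L_{-1}]=0$, the same relation holds in $KO(n)$, so $x=0$ by Lemma~\ref{th:1.2}(2). For the generation statement (3) I would prove $L_{k+1}=[L_{1},L_{k}]$ for all $k\ge 1$ by induction on $k$, each step reducing to a direct verification with $D_{KO}$ that bracketing $L_{1}$ against the monomials spanning $L_{k}$ surjects onto $L_{k+1}$; combined with $L=L_{-2}+L_{-1}+L_{0}+\sum_{k\ge 1}L_{k}$ this gives $L=\langle L_{-1}+L_{0}+L_{1}\rangle$.

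Finally, for (4) I would check the three defining properties of a torus for $T_{SKO}$: each listed generator lies in $SKO(n,\lambda)$---it is annihilated by $\mathrm{div}_{\lambda}$ and survives into the two-step derived algebra, which is exactly what forces the twist $x_{2n+1}+n\lambda x_{1}x_{1'}$---the generators commute pairwise (a short bracket computation, as they all sit in $L_{0}$), and each is ad-semisimple because every $x_{i}x_{i'}$ and $x_{2n+1}$ acts diagonally on the standard monomial basis. The main obstacle is (1): the graded computation of $\ker\mathrm{div}_{\lambda}$ and, above all, the bookkeeping required to follow exactly which monomials drop out along the derived series---together with its sensitivity to the arithmetic of $n\lambda$ modulo $p$---is where the genuine care is needed.
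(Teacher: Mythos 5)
The paper offers no proof of this lemma at all: it is explicitly ``assembled'' from \cite[Theorems 3.1 and 3.2]{ly1} and closed immediately, so the only meaningful benchmark is the strategy of that reference, part of which the paper later records as Lemma \ref{3.11}. Your plan reconstructs essentially that same strategy rather than finding a new route. Parts (2)--(4) of your argument are correct and essentially complete: transitivity does descend from $KO(n)$ via Lemma \ref{th:1.2}(2) because $SKO(n,\lambda)_{-1}=KO(n)_{-1}$ (each $x_i$ with $i\in\overline{1,2n}$ is killed by $\mathrm{div}_{\lambda}$ and survives both derived steps since $L_{-1}=[L_0,L_{-1}]$); the generation statement reduces, as you say, to $L_{k+1}=[L_1,L_k]$; and your explanation of the twisted toral element is exactly the right one, since $\Delta(n\lambda x_1x_{1'})=n\lambda$ cancels $(\frak{D}-n\lambda\,\mathrm{id}_{\mathcal O})\partial_{2n+1}(x_{2n+1})=-n\lambda$. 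Likewise your splitting $u=u_0+x_{2n+1}u_1$, turning $\mathrm{div}_{\lambda}(u)=0$ into $\Delta(u_1)=0$ together with $\Delta(u_0)\pm(\frak{D}-n\lambda\,\mathrm{id}_{\mathcal O})(u_1)=0$, is correct, and since $\frak{D}$ acts on each homogeneous component by a scalar, the solvability condition $(\frak{D}-n\lambda\,\mathrm{id}_{\mathcal O})(u_1)\in\mathrm{im}\,\Delta$ degenerates exactly at the arithmetic loci governed by $\frak{G}_2(\lambda,n)$ and $n\lambda+1=0$.

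The gap is in part (1), which is the only substantial claim: you stop at ``should emerge.'' To obtain the stated closed formula one must actually compute the rank and kernel of $\Delta$ on each homogeneous component of $\mathcal O(n,n)$ (this is where the sums over $\mathbf{J}(l)$ and the factors $2^{n-1}-2^{l-1}$ come from), exhibit explicit complements of $SKO'$ in $SKO''$ and of $SKO$ in $SKO'$ --- in the notation of Lemma \ref{3.11}, the spans of $S_5$, of the elements $X(i_1,\ldots,i_r)$ with $r\in\frak{G}_2(\lambda,n)$ (contributing $\binom{n}{r}$ each), and of $\mathbb{F}G$ (contributing $\delta'_{n\lambda,-1}$) --- and then sum the contributions. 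None of this bookkeeping is carried out or even set up concretely in your proposal, and it is precisely the part that is sensitive to the arithmetic of $n\lambda$ modulo $p$. As written, (1) is a correct reduction of the problem to a computation, not a derivation of the formula.
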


 The finite-dimensional $\mathbb{Z}$-graded Lie superalgebras
$HO(n)$, $SHO(n)$, $KO(n)$ and $SKO(n,\lambda)$  are called the \textit{Lie superalgebras of odd Cartan type.}
In this paper, we suppose $n>3$ for simplicity, although sometimes a weaker hypothesis is sufficient.
We call $T_{L}$  the \textit{standard torus of} $L$, where $L$ is a Lie superalgebra of odd Cartan type.
\section {Automorphisms and weight spaces}
\subsection{Automorphisms}
Let $L$ be a   Lie superalgebra of odd Cartan type.
We define a bilinear form
$(-,-): L_{-1}\times L_{-1}\longrightarrow \mathbb{F}$
by
$$(x_i+\mathbb{F}, x_j+\mathbb{F})\longmapsto D_{HO}(x_i)(x_j)$$
 for $L=HO$ or $SHO$, and by
$$(x_i, x_j)\longmapsto [x_i,x_j]$$ for $L=KO$ or $SKO$.
It is easy to see that $(-,-)$ is
an odd skew supersymmetric, nondegenerate bilinear form.
That is, $(L_{-1}, (-,-))$ is a symplectic superspace
(see Definition \ref{A.1} in Appendix).
Suppose $V$ is a symplectic superspace.
We write $\mathrm{R}(V)$ for the rank of the symplectic superspace $V$,
since the dimensions of maximal isotropic subspaces of a symplectic superspace are unique, from now on, we write $\mathrm{misdim}V$ for the dimension of the maximal isotropic subspace of $V$.
\begin{lemma}\label{th:1.3}
Let $L$ be a Lie superalgebra of odd Cartan type.
Suppose $V$ is a nontrivial subspace of $L_{-1}$.
Then there exists a $\mathbb Z$-homogenous automorphism
$\Phi \in \operatorname{Aut}(L)$
such that
$$
\left\{ \Phi(x_i)\mid i\in \overline{1,k} \cup \overline{1',k'} \cup
 \overline{k+1,k+t}\cup \overline{(k+t+1)',s'}\right\}
$$
is a homogenous basis of $V$,
where $k=\mathrm{R}(V)/2$,
$t=\dim V_{\bar 0}-k$
and $s=\mathrm{misdim}V$.
\end{lemma}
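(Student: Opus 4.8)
The plan is to split the statement into a purely linear-algebraic normal-form result for the symplectic superspace $(L_{-1},(-,-))$ and a realization step that promotes a linear isometry of $L_{-1}$ to a $\mathbb Z$-homogeneous automorphism of the whole superalgebra $L$. First I would record the combinatorial content. Writing $e_i$ for $x_i+\mathbb F$ (or $x_i$) and $f_i$ for $x_{i'}+\mathbb F$, the computation $D_{HO}(x_i)(x_j)=\pm\delta_{i'j}$ shows that $(-,-)$ is the odd, nondegenerate form pairing the even generators $e_1,\dots,e_n$ with the odd generators $f_1,\dots,f_n$ via $(e_i,f_j)=\pm\delta_{ij}$, so that $(L_{-1},(-,-))$ is the standard odd symplectic superspace of Definition~\ref{A.1}. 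For the graded subspace $V=V_{\bar 0}\oplus V_{\bar 1}$ the form restricts to a pairing $V_{\bar 0}\times V_{\bar 1}\to\mathbb F$ of some rank $r$, and I would verify the bookkeeping $r=\mathrm R(V)/2=k$, that the even radical $\{v\in V_{\bar 0}\mid (v,V_{\bar 1})=0\}$ has dimension $t=\dim V_{\bar 0}-k$, and that $\mathrm{misdim}\,V=\dim V-k=s$.

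A Witt-type decomposition of $V$ — selecting $k$ hyperbolic pairs spanning the nondegenerate part and complementary bases of the even and odd radicals — then produces an adapted symplectic basis of $L_{-1}$ in which $V$ is exactly the span of the coordinate vectors indexed by $\overline{1,k}\cup\overline{1',k'}\cup\overline{k+1,k+t}\cup\overline{(k+t+1)',s'}$. Equivalently, there is an \emph{even} isometry $\phi$ of $L_{-1}$ carrying the standard coordinate subspace $V_0$ (spanned by the $x_i$ for $i$ in that index set) onto $V$. This is Witt's theorem for odd symplectic superspaces, which I would invoke in the form set up around Definition~\ref{A.1} in the Appendix; parity is preserved since the graded dimensions $\dim(V_0)_{\bar 0}=k+t=\dim V_{\bar 0}$ and $\dim(V_0)_{\bar 1}=s-t=\dim V_{\bar 1}$ match by the very definitions of $k,t,s$.

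The substantive step is to lift $\phi$ to $\Phi\in\operatorname{Aut}(L)$. I would realize $\phi$ as an admissible linear substitution of the degree-one generators $x_1,\dots,x_{2n}$, fixing the contact variable $x_{2n+1}$ in the $KO$ and $SKO$ cases. Being an even, degree-preserving linear map, $\phi$ extends multiplicatively to an automorphism of the divided power/exterior algebra $\mathcal O$, hence of $W$, and is $\mathbb Z$-homogeneous of degree $0$ and $\mathbb Z_2$-homogeneous of parity $\bar 0$. Because $\phi$ preserves the form $(-,-)$, the induced automorphism commutes with $D_{HO}$ (and, once $x_{2n+1}$ is fixed, with $D_{KO}$), so it stabilizes $HO$, respectively $KO$; restricting and using the generation property $L=\langle L_{-1}+L_0+L_1\rangle$ from part~(3) of the lemmas of Section~\ref{sec1} yields the desired $\Phi$ with $\{\Phi(x_i)\}$ a homogeneous basis of $V$ as $i$ ranges over the stated set.

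The main obstacle I anticipate is not the $HO$ and $KO$ cases but the special algebras $SHO$ and $SKO$: there one must check that the isometric substitution also preserves the extra defining data — the odd Laplacian $\Delta=\sum_i\partial_i\partial_{i'}$ for $SHO$ and the twisted divergence $\mathrm{div}_\lambda$ for $SKO$ — so that $\phi$ stabilizes the relevant kernels and their derived algebras, not merely $HO$ or $KO$. Since $\Delta$ is itself assembled from the symplectic pairing, any isometry preserves it; the $SKO$ case additionally requires tracking the contact variable and the parameter $\lambda$, in particular the torus element $x_{2n+1}+n\lambda x_1x_{1'}$, and confirming that the substitution leaves $\mathrm{div}_\lambda$-closedness invariant. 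Verifying these invariances uniformly across the four families, together with the $\mathbb Z$- and $\mathbb Z_2$-homogeneity bookkeeping, is the portion demanding the most care; everything else reduces to the symplectic linear algebra of the second paragraph.
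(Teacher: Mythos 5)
Your proposal is correct and follows essentially the same route as the paper: a symplectic normal form for $V$ inside $(L_{-1},(-,-))$ obtained from the appendix lemmas (the Witt-type basis of Lemma \ref{th:0.19} extended via Lemma \ref{th:0.4}), followed by lifting the resulting even isometry to an automorphism of $\mathcal{O}$ by linear substitution of the generators (fixing $x_{2n+1}$ for $KO$ and $SKO$) and verifying, exactly as you indicate, that the isometry condition on the substitution matrix forces $\Phi D_{HO}(f)\Phi^{-1}=D_{HO}(\Phi(f))$ and that $\Delta$, $\mathfrak{D}$ and $\mathrm{div}_\lambda$ are preserved in the special cases. The only cosmetic difference is that the paper concludes $\Phi([f,g])=[\Phi(f),\Phi(g)]$ directly from the conjugation identity rather than invoking the generation property $L=\langle L_{-1}+L_0+L_1\rangle$, but this changes nothing of substance.
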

\begin{proof}
Firstly, suppose $L=HO(n)$.
According to  results in linear algebra (see Lemma  \ref{th:0.19}),
there exists a homogenous basis of $V$
$$
\{y_1, \ldots, y_k,y_{k+1}, \ldots, y_{k+t}\mid y_{1'}, \ldots, y_{k'}, y_{(k+t+1)'}, \ldots, y_{s'}\}
$$
such that, for $i\in\overline{1,k+t}$, $j\in \overline{1',k'}\cup\overline{(k+t+1)',s'}$,
$$
(y_i, y_{j})=
\begin{cases}
1, &  i \in \overline{1,k},\quad j=i',\\
0, & \text{otherwise}.
\end{cases}
$$
Set
$$
V_1=\mathrm{span}_{\mathbb{F}}\{y_1, \ldots, y_k \mid y_{1'}, \ldots, y_{k'}\}.
$$
Then $L_{-1} =V_1\oplus V_1^{\bot}$.
Set
$$
V_2=\mathrm{span}_{\mathbb{F}}\{y_{k+1}, \ldots, y_{k+t}\mid y_{(k+t+1)'}, \ldots, y_{s'}\}.
$$
We have $V_2\subset V_1^{\bot}$ and $V=V_1\oplus V_2$.
Any homogeneous basis of $V_2$ can be extended to
a homogeneous symplectic basis of $V_1^{\bot}$ (see Lemma  \ref{th:0.4}).
Thus, the homogeneous basis of $V$ can be extended to a homogeneous symplectic basis of $HO_{-1}$: $\{y_1, \ldots, y_n\,|\,y_{1'}, \ldots, y_{n'}\}$,
where $|y_{i}|=|x_i|$.
So there must be an $2n\times 2n$ even matrix $A=(a_{ij})_{2n \times 2n}$ such that
$$
(x_1, \ldots, x_n, x_{1'}, \ldots, x_{n'})=(y_1, \ldots, y_n, y_{1'}, \ldots, y_{n'})A.
$$
Since $A$ is symplectic, for $ k\in \overline{1,n}$,
$l\in \overline{1',n'}$,
we have (see Lemma \ref{0.8})
$$
\sum_{i=1}^{n}a_{ik}a_{i'l} = \begin{cases}
1, &  l=k', \\
0, & \text{otherwise}.
\end{cases}
$$
Consider the mapping $\Phi: \mathcal{O}\longrightarrow \mathcal{O}$
such that $x_i\longmapsto y_i$,
for $i\in \overline{1,2n}$.
Since $|x_i|=|y_i|$,
$\Phi$ can be extended to a $\mathbb{Z}$-homogeneous automorphism of $\mathcal{O}$ (see \cite[Lemma 2.5]{LZA}).
Furthermore, this automorphism can induce a $\mathbb{Z}$-homogeneous  automorphism (as vector space) of $HO$,
which is still denoted as $\Phi$.
Assert that $\Phi D_{HO}(f)\Phi^{-1}=D_{HO}(\Phi(f))$, for all $f \in \mathcal O$.
In fact,
put
$$
(E_1,\ldots, E_n, E_{1'}, \ldots, E_{n'}) = (\partial_1, \ldots, \partial_n, \partial_{1'}, \ldots, \partial_{n'})A^T.
$$
For all $f \in L$, we have
\begin{eqnarray}
\Phi\big(D_{HO}(f)\big)\Phi^{-1} & = & \Phi \left (\sum_{i=1}^{2n}(-1)^{\tau(i')|f|}\partial_{i'}(f)\partial_i\right)\Phi^{-1}\nonumber\\ &=& \sum_{i,j=1}^{2n}(-1)^{\tau(i')|f|}a_{ij}\Phi\big(\partial_{i'}(f)\big)\partial_j \nonumber \\
& = & \sum_{i=1}^{2n}(-1)^{\tau(i')|f|}\Phi\big(\partial_{i'}(f)\big) \Bigg(\sum_{j=1}^{2n}a_{ij}\partial_j\Bigg) \nonumber \\
& =& \sum_{i=1}^{2n} (-1)^{\tau(i')|f|}E_{i'}\big(\Phi(f)\big)E_i \nonumber \\
& = & \sum_{i=1}^{2n}(-1)^{\tau(i')|f|} \Bigg( \bigg(\sum_{k=1}^{2n} a_{i'k} \partial_k \big(\Phi(f)\big)\bigg)\bigg(\sum_{l=1}^{2n} a_{il} \partial_l\bigg) \Bigg) \nonumber \\
& = & \sum_{i=1}^{2n}(-1)^{\tau(i')|f|}\Bigg(\sum_{k=1}^{2n} \sum_{l=1}^{2n} a_{i'k}a_{il} \Big(\partial_k\big(\Phi(f)\big)\partial_l\Big)\Bigg) \nonumber \\
& = & \sum_{k=1}^{2n} \sum_{l=1}^{2n}\left(\bigg(\sum_{i=1}^{2n}(-1)^{\tau(i')|f|} a_{i'k}a_{il}\bigg) \partial_k\big(\Phi(f)\big)\partial_l \right)\nonumber\\
&=&\sum_{k=1}^{2n}(-1)^{\tau(k)|f|}\partial_k\big(\Phi(f)\big)\partial_{k'} \nonumber\\
&=&D_{HO}\big(\Phi(f)\big). \nonumber
\end{eqnarray}
So $\Phi([f,g]) =[\Phi(f),\Phi(g)]$, for all $f,g\in L$.
Thus $\Phi$ is a desired automorphism.

Then we consider $L=SHO$. A direct calculation shows that
$$\mathrm{div}D_{HO}(\Phi(f))=\Phi\mathrm{div}D_{HO}(f)\Phi^{-1}, ~\mbox{for all}~ f\in HO.$$
So, $\Phi(SHO')\subset SHO'$.
Thus $\Phi|_{SHO}$ is a desired automorphism of $SHO$.

Next we suppose $L=KO$.
Let
$$
\{y_1, \ldots, y_k,y_{k+1}, \ldots, y_{k+t}\mid y_{1'}, \ldots, y_{k'}, y_{(k+t+1)'}, \ldots, y_{s'}\}
$$
be a homogeneous symplectic basis of $V$,
which can be extended to a homogeneous symplectic basis of $L_{-1}$,
denoted by
$\{y_1, \ldots, y_n \mid y_{1'}, \ldots, y_{n'}\}$.
So there exists a  $2n\times 2n$ even matrix $A=(a_{ij})_{2n \times 2n}$ such that
$$
 (y_1, \ldots, y_n, y_{1'}, \ldots, y_{n'}) = (x_1, \ldots, x_n, x_{1'}, \ldots, x_{n'})A^{-1}.
$$
For all $i\in \overline{1,2n}$,
consider the mapping $\Phi:x_i\longmapsto y_i$,
and $x_{2n+1}\longmapsto x_{2n+1}$.
Since $|x_{i}|=|y_{i}|$,
$\Phi$ can be extended to a $\mathbb{Z}$-homogeneous automorphism of $\mathcal{O}$,
which is still denoted as $\Phi$.
It is known from calculation that for all $ f\in L$,
the following equations hold:
\begin{align}
\Phi(D_{HO}(f))\Phi^{-1}&=D_{HO}(\Phi(f)), \nonumber \\
\Phi(\partial_{2n+1}(f)\frak{D})\Phi^{-1}&=\partial_{2n+1}(\Phi(f))\frak{D},\nonumber \\
\Phi((\frak{D}(f)-2f)\partial_{2n+1})\Phi^{-1}& =(\frak{D}(\Phi(f))-2\Phi(f))\partial_{2n+1}.\nonumber
\end{align}
Consequently, $\Phi$ is a desired automorphism of $L$.
It is easy to show that $\Phi|_{SKO}$ is a desired automorphism of $SKO$.
\end{proof}
\subsection{Weight space decompositions}
We discuss weight space decompositions of $L=HO(n)$.
Suppose $T$ is the standard torus of $L$. We choose a basis
$
\{\epsilon_i\mid i\in \overline{1,n}\}
$
of $T^*$,
where
$\epsilon_i(x_jx_{j'})=\delta_{j'i}-\delta_{ji}$ for $i,j\in \overline{1,n}$.
Put
$$\Psi=\mathbb{Z}\epsilon_1\oplus\mathbb{Z}\epsilon_2\oplus\cdots\oplus\mathbb{Z}\epsilon_n.$$
For simplicity, let $\epsilon_{i'}=-\epsilon_{i}$, for $i\in \overline{1,n}$.
The following lemma can be verified by a direct calculation:
\begin{lemma}\label{2.2}
Let
$L=L^\theta\oplus\bigoplus_{\gamma\in\Psi}L^\gamma$ and $L_i=L_i^\theta\oplus\bigoplus_{\gamma\in\Psi_i}L_{i}^{\gamma}$
be the weight space decompositions with respect to the torus $T$,
where $\Psi_i\subset\Psi\subset T^*$ and $\theta$ is the weight zero.
Then we have
\begin{enumerate}
\item[(1)] $\dim L_{-1}^{\epsilon_i}=1$, for $i\in \overline{1,2n}$.
\item[(2)] $\dim L_0^{\epsilon_i+\epsilon_j}=1$, for $i, j\in \overline{1,2n}$ and $i\neq j'$.
\item[(3)]
$
\dim L_1^{\epsilon_i+\epsilon_j+\epsilon_k}=\begin{cases}
1,  &~\mbox{for}~ i,j,k\in \overline{1,2n}, i\notin\{j',k'\} ~\mbox{and}~ j\neq k', \\
n, &\text{otherwise}.
\end{cases}
$
\end{enumerate}

\end{lemma}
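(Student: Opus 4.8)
The plan is to diagonalize the standard torus $T$ on the monomial basis of $L=HO(n)$ and then, for each weight occurring in the statement, count the basis monomials realizing it. First I would compute the adjoint action of a torus generator. A direct calculation from the definition of $D_{HO}$ gives $D_{HO}(x_jx_{j'})=x_{j'}\partial_{j'}-x_j\partial_j$, so that $\operatorname{ad}(x_jx_{j'})$ acts on $L=\bar{\mathcal{O}}(n,n)$ as the degree operator $x_{j'}\partial_{j'}-x_j\partial_j$. On a standard monomial $x^{(\alpha)}x^{u}$ this operator is scalar, with eigenvalue $u_{j'}-\alpha_j$, where $u_{j'}\in\{0,1\}$ records whether $j'\in u$. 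Hence every standard monomial is a simultaneous $T$-eigenvector, the standard basis is a weight basis, and the weight of $x^{(\alpha)}x^{u}$ equals $\sum_{\ell=1}^{2n}c_\ell\epsilon_\ell$, where $c_\ell$ is the exponent of $x_\ell$ and we use $\epsilon_{i'}=-\epsilon_i$. In particular each $x_\ell$ has weight $\epsilon_\ell$, so the lemma reduces to a counting problem over the lattice $\Psi$, once one checks that $\epsilon_1,\ldots,\epsilon_n$ remain $\mathbb{F}$-linearly independent and that $\epsilon_i\neq-\epsilon_i$, both of which hold because $p>3$.

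For part (1), the $2n$ weights $\epsilon_1,\ldots,\epsilon_n,-\epsilon_1,\ldots,-\epsilon_n$ attached to the spanning set $\{x_\ell\}$ of $L_{-1}$ are pairwise distinct, so each $L_{-1}^{\epsilon_i}$ is spanned by the single monomial $x_i$, giving dimension $1$. For parts (2) and (3), I would fix the target weight $\gamma=\sum_{k}\gamma_k\epsilon_k$ and solve, for the exponent data $(p_k,q_k)_{k\in\overline{1,n}}$ (with $p_k$ the exponent of $x_k$ and $q_k$ that of $x_{k'}$), the system $p_k-q_k=\gamma_k$ together with the degree constraint $\sum_k(p_k+q_k)=d$, where $d=2$ for $L_0$ and $d=3$ for $L_1$, subject to $p_k\geq0$ and $q_k\in\{0,1\}$ (the exterior constraint on odd variables). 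Summing the weight equations yields $\sum_k q_k=(d-\sum_k\gamma_k)/2$, which pins down the total odd-occupation and drives the count.

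For a generic target, namely part (2) with $i\neq j'$ or part (3) with no dual pair among $i,j,k$, this system has a unique admissible solution, producing a single realizing monomial and dimension $1$; when a coefficient forces an index to carry exponent $2$ or $3$, it is the divided-power freedom on the even variables that supplies that solution. The dimension-$n$ assertion in part (3) is where the genuine content lies, and it is also the step I expect to be the main obstacle. When one pair among $i,j,k$ is dual, say $j=k'$, the weight collapses to $\gamma=\epsilon_i$, and the degree-$3$ system acquires the family $x_ix_ax_{a'}$ as $a$ ranges over suitable indices in $\overline{1,n}$, together with the divided-power monomial $x^{(2\varepsilon_i)}x_{i'}$ arising from the branch $q_i=1$, for a total of $n$.

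The delicate points in this last step are to verify that the exponent system admits no further solutions, to track carefully the asymmetry between the divided-power (even) and exterior (odd) variables at the boundary of the solution region, and to confirm that distinct exponent data always yield distinct monomials; the parity of the indices $i,j,k$ must be watched closely here, since the exterior constraint $q_k\leq1$ removes exactly the configurations in which an odd variable would have to repeat. Carrying out this bookkeeping branch by branch, and in each branch matching the solution set of the integer system against the surviving standard monomials, yields every dimension asserted in the lemma.
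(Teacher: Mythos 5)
Your proposal is correct and is essentially the paper's own argument: the paper gives no details beyond ``verified by a direct calculation,'' and your computation $\operatorname{ad}(x_jx_{j'})=x_{j'}\partial_{j'}-x_j\partial_j$, which makes the standard monomial basis a simultaneous weight basis with $x^{(\alpha)}x^{u}$ of weight $\sum_\ell c_\ell\epsilon_\ell$, followed by counting the monomials of $\mathbb{Z}$-degree $1$, $2$, $3$ in each weight, is exactly that calculation. One remark for when you carry out the bookkeeping you defer: the parity issue you flag is genuine, and in the degenerate index configurations (a repeated odd index in (2) or (3), or a collapsed weight equal to $\epsilon_{i'}$ with $i\in\overline{1,n}$ in (3), where the candidate monomial $x_{i'}^{(2)}x_{i}$ vanishes) the dimensions come out $0$ resp.\ $n-1$ rather than the stated $1$ resp.\ $n$, so the lemma should be read with those index choices excluded.
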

The following lemma will be used frequently
which comes from the results in linear algebra (see also \cite{a}).
\begin{lemma}\label{th3.2}
The following statements hold:
\begin{enumerate}
\item[(1)]
Let $V$ be a vector space and
$\{f_1, \ldots, f_n\}$ be a finite set of linear functions on $V$.
Then
$$
\{x\in V\mid \prod_{1\leq i \neq j\leq n}(f_i-f_j)x\neq 0\}\neq \emptyset.
$$

\item[(2)] Let $L$ be Lie superalgebra.
For $a\in L$,
suppose $x=\sum_{i=1}^n x_i$ is a sum of eigenvectors of $\mathrm{ad}a$
associated with mutually distinct eigenvalues.
Then all $x_i$'s lie in the sub Lie superalgebra generated by $a$ and $x$.
\end{enumerate}
\end{lemma}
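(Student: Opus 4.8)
The two parts are independent facts from linear algebra, so the plan is to handle them separately.

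For part (1) the idea is to view the product as a single polynomial function on $V$ and to exploit that $\mathbb{F}$ is infinite. Since $\{f_1,\dots,f_n\}$ is a \emph{set}, its members are pairwise distinct, so for each pair $i\neq j$ the difference $f_i-f_j$ is a nonzero linear functional, hence a nonzero degree-one element of the polynomial algebra $\mathbb{F}[V]$. This algebra is an integral domain, so the product $P=\prod_{1\leq i\neq j\leq n}(f_i-f_j)$ is again a nonzero polynomial. Because $\mathbb{F}$ is algebraically closed it is infinite, and since $V$ is finite-dimensional a nonzero polynomial on $V$ cannot induce the identically zero function; therefore $P(x)\neq 0$ for some $x\in V$, which is precisely the asserted nonemptiness.

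For part (2) the key device is Vandermonde invertibility. Write $[a,x_i]=\lambda_i x_i$ with the $\lambda_i\in\mathbb{F}$ mutually distinct, and set $M=\langle a,x\rangle$. Applying $\mathrm{ad}\,a$ repeatedly to $x$ gives, for each $k\geq 0$,
$$
(\mathrm{ad}\,a)^k(x)=\sum_{i=1}^n\lambda_i^{\,k}x_i,
$$
and every such iterated bracket lies in $M$. Collecting the cases $k=0,1,\dots,n-1$ yields a linear system whose coefficient matrix is the Vandermonde matrix $(\lambda_i^{\,k})$; its determinant $\prod_{i<j}(\lambda_j-\lambda_i)$ is nonzero because the $\lambda_i$ are distinct in $\mathbb{F}$. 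Inverting this matrix over $\mathbb{F}$ expresses each $x_i$ as an $\mathbb{F}$-linear combination of $x,\mathrm{ad}\,a(x),\dots,(\mathrm{ad}\,a)^{n-1}(x)$, all of which lie in $M$; since $M$ is a subspace, $x_i\in M$ as required.

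Neither step presents a serious obstacle, and I expect the only point requiring care to be the status of the characteristic. For part (1) one must remember to invoke finite-dimensionality of $V$ (assumed throughout the paper) so that vanishing as a function forces vanishing as a polynomial; for part (2) the possible worry is whether the Vandermonde determinant could degenerate in characteristic $p$, but it cannot, since the relevant hypothesis is distinctness of the $\lambda_i$ as elements of $\mathbb{F}$, which makes $\prod_{i<j}(\lambda_j-\lambda_i)$ a nonzero product irrespective of $p$. Note also that the $\mathbb{Z}_2$-grading plays no role: part (2) is purely a statement about the linear operator $\mathrm{ad}\,a$ acting on the underlying vector space, so no homogeneity considerations enter.
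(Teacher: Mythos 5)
Your proof is correct. The paper itself gives no argument for this lemma, merely asserting that it ``comes from the results in linear algebra'' with a pointer to the reference of Albuquerque--Elduque; your hyperplane-avoidance argument for (1) (valid since $\mathbb{F}$, being algebraically closed, is infinite and $V$ is finite-dimensional) and the Vandermonde argument for (2) are precisely the standard proofs being invoked, with the relevant characteristic-$p$ caveats correctly dispatched.
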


\section{MGS of Type $(\textrm{I})$}
Suppose $L$ is a Lie superalgebra
of odd Cartan type.
This section is concerned with MGS, $M$,
of type $(\textrm{I})$ for $L$,
that is,
$ M_{-1}=L_{-1} ~\mbox{and}~ M_{0}=L_{0}$.
Embed $KO(n)$ into $KO(n+1)$ in a natural fashion.
Without notice, we view $KO(n)$ as a subalgebra of $KO(n+1)$.
Put $\xi=np$. We have the following theorem:
\begin{theorem}\label{03.1}
Suppose $L$ is a Lie superalgebra
of odd Cartan type.  Then
\begin{enumerate}
\item[(1)] For $L=HO(n)$,
$L$ has exactly one MGS of type $(\mathrm{I}):$
$$
L_{-1}+L_{0}+SHO_1'+SHO_2'+\cdots+SHO_{\xi-2}'.
$$
\item[(2)] For $L=SHO(n)$,
$L$ has exactly one MGS  of type (\MyRoman{1}):
$$
L_{-1}+L_{0}.
$$
\item[(3)] For $L=KO(n)$, all the  MGS of type (\MyRoman{1}) are listed below:

$$
L_{-2}+L_{-1}+L_{0}+L_{1,0}+L_{2,0}+\cdots+L_{\xi-2,0}\nonumber \\
$$
and
\begin{equation*}
L_{-2}+L_{-1}+L_{0}+L^{\lambda}_{1}+\cdots+L^{\lambda}_{\xi-2},
\end{equation*}
where  $L_{i,0}=L_i\cap\mathcal{O}(n,n)$ and
\begin{align}
L_{i}^{\lambda}&=\begin{cases}
 SKO''\big(n, (\lambda+1)/n\big)_i, &    ~\mbox{if}~ n\not\equiv 0 (\mathrm{mod} \  p)\\
SKO''\big(n+1,(\lambda+1)/(n+1)\big)_i\cap KO(n),  &~\mbox{if}~ n\equiv 0 (\mathrm{mod} \  p)
\end{cases}\nonumber
\end{align}
for $\lambda\in \mathbb{F}$
and  $i>0$.
\item[(4)] For $L=SKO(n,\lambda)$,
\begin{enumerate}
\item if $1+n\lambda\neq 0$ in $\mathbb{F}$,
$L$ has exactly one MGS  of type (\MyRoman{1}):
$$
L_{-2}+L_{-1}+L_{0}+L_{1,0}+\cdots+L_{\xi-2,0},
$$
where
$$
L_{1,0}=L_1\cap\mathcal{O}(n,n).
$$
\item if $1+n\lambda=0$ in $\mathbb{F}$,
$L$ has exactly two MGS  of type (\MyRoman{1}):
$$
L_{-2}+L_{-1}+L_{0}+L_{1,0}+\cdots+L_{\xi-3,0},
$$
and
$$
L_{-2}+L_{-1}+L_{0}+L_{1,1},
$$
where
$$
L_{1,0}=L_1\cap\mathcal{O}(n,n), \quad
L_{1,1}=\{fx_{2n+1}\in L_1\mid f\in \mathcal{O}(n,n)\}.
$$
\end{enumerate}
\end{enumerate}
\end{theorem}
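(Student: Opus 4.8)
The plan is to reduce the whole classification to a question about the $L_0$-module structure of $L_1$. First I would pin down the negative and zero parts of any type-$(\mathrm{I})$ MGS $M$. By definition $M_{-1}=L_{-1}$ and $M_0=L_0$; when $L=KO$ or $SKO$ the nondegeneracy of the form $(-,-)$ forces $\mathbb{F}\cdot 1=[M_{-1},M_{-1}]\subseteq M_{-2}$, so $M_{-2}=L_{-2}$ as well, and hence $M_{\leq 0}=L_{\leq 0}$ in every case. Next, since $L=\langle L_{-1}+L_0+L_1\rangle$ (Lemmas \ref{1.1}--\ref{th:1.2}), if $M_1=L_1$ then $M\supseteq\langle L_{-1}+L_0+L_1\rangle=L$, contradicting properness; thus $M_1$ is a \emph{proper} $L_0$-submodule of $L_1$ (it is an $L_0$-submodule because $[L_0,M_1]=[M_0,M_1]\subseteq M_1$).

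The second step is to show that $M$ is completely determined by $M_1$. Given a proper $L_0$-submodule $N\subsetneq L_1$, set $M(N)_i=L_i$ for $i\leq 0$, $M(N)_1=N$, and inductively $M(N)_{i+1}=\{x\in L_{i+1}\mid [L_{-1},x]\subseteq M(N)_i\}$ for $i\geq 1$. A Jacobi-identity induction (if $a\in M(N)_i$, $b\in M(N)_j$ then $[L_{-1},[a,b]]\subseteq[M(N)_{i-1},M(N)_j]+[M(N)_i,M(N)_{j-1}]\subseteq M(N)_{i+j-1}$) shows $M(N)$ is a graded subalgebra, and by construction it is the largest graded subalgebra with $M(N)_{\leq 0}=L_{\leq 0}$ and degree-one part $N$; clearly $M(N)\subsetneq L$ since $M(N)_1=N\neq L_1$. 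Because any type-$(\mathrm{I})$ MGS $M$ satisfies $M\subseteq M(M_1)$ and is maximal, we get $M=M(M_1)$. Consequently $M\mapsto M_1$ is an inclusion-preserving bijection between type-$(\mathrm{I})$ MGS of $L$ and \emph{maximal} proper $L_0$-submodules of $L_1$, and the theorem becomes the problem of listing these maximal submodules and computing the associated $M(N)$.

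The heart of the argument, and the main obstacle, is the $L_0$-module analysis of $L_1$. The tool is the relevant divergence operator: the odd Laplacian $\Delta$ for $L=HO,SHO$ and $\mathrm{div}_\lambda$ for $L=KO,SKO$, each of which restricts to an $L_0$-module homomorphism out of $L_1$ whose kernel is a submodule (producing $SHO_1'$, $L_1\cap\mathcal{O}(n,n)$, and the $L_1^{\lambda}$'s). Using the weight-space data of Lemma \ref{2.2} together with Lemma \ref{th3.2}, I would show that each such quotient is irreducible and that any submodule not contained in the relevant kernel already generates all of $L_1$; this identifies these kernels as precisely the maximal proper submodules. Here the delicate points are arithmetic: for $KO$ the operator $\mathrm{div}_\lambda$ degenerates when $n\equiv 0\pmod p$, which is handled by viewing $KO(n)\hookrightarrow KO(n+1)$ and cutting down $SKO''\big(n+1,\cdot\big)$, and for $SKO$ the weight relation $1+n\lambda=0$ produces the extra maximal submodule $L_{1,1}$, accounting for the two MGS in case (4)(ii).

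Finally, for each maximal submodule $N$ I would identify $M(N)$ explicitly. For $SHO$, $L_1$ is $L_0$-irreducible, so $N=0$ is the only choice and $M=L_{-1}+L_0$. For $HO$, $N=SHO_1'$ is the unique maximal submodule and the continuation $M(N)_i$ coincides degree-by-degree with $SHO_i'$ up to the top degree $\xi-2$, giving the stated subalgebra. For $KO$ and $SKO$ the same bookkeeping, combined with the divergence description of $SKO''$ and the defining condition $1+n\lambda\neq 0$ or $=0$, yields the listed families; the only routine work left is the explicit dimension and weight computation confirming that no further maximal submodules occur.
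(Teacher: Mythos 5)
Your proposal is correct and follows essentially the same route as the paper: both reduce the classification to identifying the maximal $L_0$-submodules of $L_1$ (using transitivity, the generation property $L=\langle L_{-1}+L_0+L_1\rangle$, and the divergence operators $\Delta$ and $\overline{\mathrm{div}}_\lambda$) and then prolonging degree by degree. Your explicit bijection $M\mapsto M_1$ is only implicit in the paper, which instead constructs the candidates $\overline{M}$, $M^\lambda$, etc.\ directly and shows any type-$(\mathrm{I})$ MGS is contained in one of them, but the substantive input --- Lemmas \ref{th:2.1}, \ref{3.6}, \ref{th:2.4} and \ref{3.10} --- is the same.
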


We will prove  this theorem in the following subsections.
\subsection{$HO(n)$}
 We will determine the MGS of type  (\MyRoman{1}) for $HO(n)$.
Obviously $SHO'_i$ is an $HO_0$-submodule, for $i\geq-1$.
Write
$U(HO_0)$ for the universal enveloping algebra of $HO_0$.
By a direct computation, we have the following lemma:：
\begin{lemma}\label{th:2.1}
The following statements hold:
\begin{enumerate}
\item[(1)]$SHO_1'$ is an irreducible $HO_0$-module.
\item[(2)]$HO_{1}=U(HO_{0})x^{(2\varepsilon_{i_{0}})}x_{i_{0}'}$, for $i_{0}\in\overline{1,n}.$
\item[(3)]$HO_{1}=U(HO_{0})D$, for $D\in HO_1\setminus SHO'_1$.
\end{enumerate}
\end{lemma}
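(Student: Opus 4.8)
The engine of all three parts is the explicit action of $HO_0$ on $HO_1$: for a degree-two monomial $a$ and a degree-three monomial $w$ one has $[a,w]=D_{HO}(a)(w)$, where $D_{HO}(a)$ is a linear superderivation preserving the $\mathbb Z$-grading, so every bracket is read off from the operators $D_{HO}(x_rx_s)$. The organizing principle is the odd Laplacian $\Delta$. Since $\Delta$ satisfies a commutation identity $[\Delta,D_{HO}(f)]=\pm D_{HO}(\Delta f)$ (a direct computation, and the reason $SHO'=\ker\Delta$ is a subalgebra) and $\Delta f$ is a scalar for $f\in HO_0$, the operator $\Delta$ commutes with the $HO_0$-action up to sign. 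Hence $\Delta\colon HO_1\to L_{-1}$ is a $\mathbb Z_2$-parity-reversing, degree $-2$ homomorphism of $HO_0$-modules with kernel exactly $SHO_1'$. Computing $\Delta(x^{(2\varepsilon_{i_0})}x_{i_0'})=x_{i_0}$ and $\Delta(x_{i_0'}x_kx_{k'})=\pm x_{i_0'}$ (for $k\neq i_0$) shows the image is a nonzero submodule of the irreducible module $L_{-1}$, hence all of $L_{-1}$; so $\Delta$ is onto and $HO_1/SHO_1'\cong L_{-1}$. This short exact sequence is what ties the three statements together.

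For (1) I would argue by the standard cyclic-highest-weight method relative to the standard torus $T$. Using the weight-space decomposition of Lemma \ref{2.2} and the semisimplicity of the $T$-action, any nonzero $HO_0$-submodule $M\subseteq SHO_1'$ contains a $T$-weight vector (Lemma \ref{th3.2}(2) lets me peel off the weight components). I would then (i) enumerate the weights occurring in $SHO_1'=\ker\Delta$, (ii) fix a Borel-type subalgebra of $HO_0$ and a highest weight vector $w_+$, showing that every nonzero weight vector can be raised to a nonzero multiple of $w_+$, so $w_+\in M$, and (iii) show $U(HO_0)w_+=SHO_1'$ by lowering $w_+$ through the full weight diagram. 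Then $M=SHO_1'$, which is the assertion.

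For (2) I would compute the $HO_0$-orbit of $v=x^{(2\varepsilon_{i_0})}x_{i_0'}$ directly. Bracketing $v$ with suitable degree-two monomials produces, on one hand, a nonzero element of $SHO_1'$: every such bracket lies in $HO_1$, and whenever its $\Delta$-image vanishes (which one arranges since $\Delta[a,v]=\pm[a,x_{i_0}]$) it lies in $SHO_1'$, and such nonzero brackets exist. By part (1) this single element generates all of $SHO_1'$, so $SHO_1'\subseteq U(HO_0)v$. On the other hand $\Delta v=x_{i_0}\neq 0$, so the image of $U(HO_0)v$ in $HO_1/SHO_1'\cong L_{-1}$ is a nonzero submodule, hence everything. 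Therefore $U(HO_0)v=HO_1$.

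Statement (3) is where the real work sits. Given $D\notin SHO_1'$, the image of $U(HO_0)D$ in $L_{-1}$ is again all of $L_{-1}$, so $U(HO_0)D+SHO_1'=HO_1$; the issue is to promote this to equality. This holds precisely when the extension $0\to SHO_1'\to HO_1\to L_{-1}\to 0$ is non-split, equivalently when $HO_1$ has simple socle $SHO_1'$: then the only submodules are $0$, $SHO_1'$, $HO_1$, and the nonzero submodule $U(HO_0)D\not\subseteq SHO_1'$ must equal $HO_1$. The main obstacle is therefore to rule out an $HO_0$-equivariant section of $\Delta$. I expect to do this by a short explicit computation showing that any weight-$\epsilon_{i_0}$ lift $v+w'$ (with $w'\in SHO_1'$) of $x_{i_0}\in L_{-1}$ cannot be made $HO_0$-stable: some raising bracket that annihilates $x_{i_0}$ in $L_{-1}$ produces a fixed nonzero element of $SHO_1'$ that no choice of $w'$ can cancel uniformly. (To reduce a general $D$ to such a weight vector I first apply Lemma \ref{th3.2}(2) to place every $T$-weight component of $D$ inside $U(HO_0)D$.) The weight combinatorics behind part (1) and this non-splitting computation are the two technical cruxes.
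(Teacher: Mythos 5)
The paper offers no argument for this lemma beyond the phrase ``by a direct computation,'' so there is no written proof to compare yours against; what can be judged is whether your architecture is sound and whether the computations you defer would actually go through. Your organizing device is correct and is a genuinely cleaner packaging than a bare case-by-case computation: since $\Delta$ drops polynomial degree by $2$, $\Delta(f)$ is a constant for $f\in HO_0$, so the identity $\Delta\circ D_{HO}(f)=\pm D_{HO}(f)\circ\Delta\pm D_{HO}(\Delta f)$ does make $\Delta\colon HO_1\to L_{-1}$ a homomorphism of $HO_0$-modules with kernel exactly $SHO_1'$, and surjectivity follows from $\Delta(x^{(2\varepsilon_{i_0})}x_{i_0'})=\pm x_{i_0}$ together with the irreducibility of $L_{-1}$. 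Your reduction of (2) to (1) is also complete in all essentials: taking $a=x_{i_0}x_j$ with $j\notin\{i_0,i_0'\}$ gives $[a,x_{i_0}]=0$ while $[a,v]=\pm x_jx^{(2\varepsilon_{i_0})}\neq 0$ lies in $\ker\Delta$, so $U(HO_0)v$ meets $SHO_1'$ nontrivially and, by (1) and surjectivity onto $L_{-1}$, equals $HO_1$. Your reformulation of (3) as non-splitness of $0\to SHO_1'\to HO_1\to L_{-1}\to 0$ is likewise correct, using that the two composition factors are simple and non-isomorphic (by dimension).

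The gap is that the two statements carrying all the content --- the irreducibility of $SHO_1'$ in (1) and the non-existence of an equivariant section in (3) --- are only announced as computations you ``would'' perform. For (1) this is not a routine omission: the Remark immediately following the lemma records that $SHO_1'$ is \emph{reducible} for $n=3$, so any correct highest-weight argument must visibly use the standing hypothesis $n>3$, and your outline gives no indication of where that enters or why the raising/lowering chain through the weight diagram of $\ker\Delta$ does not break for larger $n$. For (3), the ``short explicit computation'' is also less automatic than you suggest: testing a putative equivariant lift $w$ of $x_{i_0}$ against a single family of operators such as $x_{i_0}x_k$ only yields linear conditions on the coefficients of $w$ that are solvable for generic $n\bmod p$, so one must combine several families of raising operators (and track the congruence conditions on $n$ modulo $p$) to exclude a section uniformly. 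Until those two finite computations are actually carried out, your text is a correct and well-structured reduction of the lemma to them, but not yet a proof.
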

\begin{remark}
For $n=3$, $SHO_1'$ is a reducible $HO_0$-module.\qed
\end{remark}

\begin{proof}[\textit{Proof of Theorem \ref{03.1}(1)}]
Let $M=HO_{-1}+HO_{0}+SHO_1'+SHO_2'+\cdots+SHO_{\xi-2}'.$
Obviously, $M$ is a proper subalgebra of $HO$.
Suppose $\overline{M}$ is an arbitrary subalgebra of $HO$ with $\overline{M}_{-1}=HO_{-1}$ and $\overline{M}_0=HO_0$.
If $\overline{M}\nsubseteq M$, then
there exists $D\in \overline{M}\setminus M$. Obviously, $\Delta(D)\neq 0$. Moreover,
there must be $k\in\overline{1,2n}$
such that
$$
\Delta[x_{k}, D]=-\partial_{k'}\Delta(D)\neq 0.
$$
Thus there is $D'\in \overline M_1\backslash SHO_1'$.
We can see that $\overline{M} \supset U(L_0)D'=L_1$ by Lemma \ref{th:2.1}.
Thus we have $\overline{M}=HO$, by Lemma \ref{1.1}.
Therefore, $M$ is the unique MGS of type (\MyRoman{1}).
\end{proof}
\subsection{$SHO(n)$}
 We will determine the MGS of type  (\MyRoman{1}) for $SHO(n)$.
Let $m\geq n$.
 For $i\in\overline{1,n}$,
define the linear operator $\nabla_i$ on $\mathcal{O}(n,m)$ such that
$$
\nabla_i(x^{(\alpha)}x^{u})=x^{(\alpha+\varepsilon_i)}x_{i'}x^{u},
$$
where
 $(\alpha, u)\in \mathbf{A}(n)\times\mathbf{B}(m)$.
 We also use the symbols
  $\Delta_i=\partial_i\partial_{i'}$ and $\Gamma_i^j=\nabla_j\Delta_i$,
for $i,j\in\overline{1,n}.$
Fix $(\alpha, u)\in \mathbf{A}(n)\times\mathbf{B}(m)$ and put
$$
\mathbf{I}(\alpha, u)=\{
i\in\overline{1,n}\mid \Delta_i(x^{(\alpha)}x^u)\neq0\},$$
$$
\mathbf{\widetilde{I}}
(\alpha, u)=\{
i\in\overline{1,n}\mid \nabla_i(x^{(\alpha)}x^u)\neq0\},
$$
$$
\mathcal{D^*}=\{x^{(\alpha)}x^{u}\in \mathcal{O}(n,m)\mid\mathbf{I}(\alpha, u)\neq
\emptyset,\mathbf{\widetilde{I}}
(\alpha, u)\neq \emptyset\}.
$$
Now suppose $m=n$ and put
$$
\mathcal{G}=\widetilde{\mathcal{G}}\cup\{x^{(\alpha)}x^{u}\in SHO\mid
\mathbf{I}(\alpha, u)=
\emptyset \},
$$
where
$$
\widetilde{\mathcal{G}}=\{x^{(\alpha)}x^{u}-\sum_{i\in\mathbf{I}(\alpha, u)}
\Gamma_i^q(x^{(\alpha)}x^{u})\in SHO \mid x^{(\alpha)}x^{u}\in \mathcal{D}^*,
q\in\widetilde{\mathbf{I}}(\alpha, u)\}.
$$
The following lemma follows from \cite[Theorem 2.7,  Propositions 3.1 and 3.4]{lh}.
\begin{lemma}\label{3.5}
The following statements hold:
\item[(1)]
$SHO'$ is spanned by $\mathcal{G}$.
\item[(2)]
$SHO'=\overline{SHO}\oplus\mathrm{span_{\mathbb{F}}}\{x^{(\alpha)}x^{u}\in SHO\mid
\mathbf{I}(\alpha, u)=\mathbf{\widetilde{I}}(\alpha,u)=\emptyset\},$
where
$$
\overline{SHO}=\mathrm{span_{\mathbb{F}}}\big\{\widetilde{\mathcal{G}}\cup\{x^{(\alpha)}x^{u}\in SHO\mid
\mathbf{I}(\alpha, u)=\emptyset,\mathbf{\widetilde{I}}(\alpha,u)\neq\emptyset\}\big\}.
$$
Moreover, $\overline{SHO}=\oplus_{i=-1}^{\xi-4}\overline{SHO}_i$.
\item[(3)]$SHO=\oplus_{i=-1}^{\xi-5}SHO_i$.
Moreover, $SHO_i=\overline{SHO}_i$.
\end{lemma}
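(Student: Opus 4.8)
The plan is to reduce all three assertions to the odd Laplacian complex on $\bar{\mathcal{O}}(n,n)$ and then read them off by a leading-term argument backed up by the dimension formulas of Lemma~\ref{1.2}(1). Recall that, modulo constants, $SHO'(n)=\ker\Delta$ with $\Delta=\sum_{i=1}^{n}\Delta_i$, so proving (1) amounts to producing an explicit spanning set of $\ker\Delta$ inside the standard monomial basis $\{x^{(\alpha)}x^{u}\}$. First I would split the basis monomials, according to the pair $(\mathbf{I}(\alpha,u),\widetilde{\mathbf{I}}(\alpha,u))$, into the three families that the statement singles out: those with $\mathbf{I}(\alpha,u)=\emptyset$, which already lie in $\ker\Delta$; those in $\mathcal{D}^{*}$, where both index sets are nonempty; and the remaining ones with $\mathbf{I}(\alpha,u)\neq\emptyset=\widetilde{\mathbf{I}}(\alpha,u)$, which are the monomials that cannot be corrected and must fall outside the spanning set.

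The technical core is a short list of identities between the operators $\Delta_i$ and $\nabla_j$, each proved by evaluating on a single monomial: $\Delta_q\nabla_q=\mathrm{id}$ wherever $\nabla_q$ is nonzero, $\Delta_j\nabla_q=-\nabla_q\Delta_j$ for $j\neq q$, and $\Delta_i\Delta_j=-\Delta_j\Delta_i$ together with $\Delta_i^{2}=0$, so that $\Delta^{2}=0$. Feeding these into $\Gamma_i^{q}=\nabla_q\Delta_i$ I would obtain, for $f=x^{(\alpha)}x^{u}\in\mathcal{D}^{*}$ and any $q\in\widetilde{\mathbf{I}}(\alpha,u)$,
\[
\Delta\Big(f-\sum_{i\in\mathbf{I}(\alpha,u)}\Gamma_i^{q}f\Big)=-\nabla_q\Delta_q\Delta f .
\]
The decisive point is that $q\in\widetilde{\mathbf{I}}(\alpha,u)$ means $x_{q'}$ is absent from $x^{u}$, hence from every monomial occurring in $\Delta f$, so $\Delta_q\Delta f=0$; thus every element of $\widetilde{\mathcal{G}}$, and trivially every $\mathbf{I}(\alpha,u)=\emptyset$ monomial, lies in $SHO'$. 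For the reverse inclusion I would order the monomials so that each correction $\sum_i\Gamma_i^{q}f$ is strictly below its leading monomial $f$; the resulting elements of $\mathcal{G}$ are then unitriangular, hence linearly independent, and comparing their number with $\dim SHO'=\dim L'$ from Lemma~\ref{1.2}(1) forces them to be a basis, giving (1).

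For (2) and (3) I would identify the two derived algebras degree by degree. The monomials with $\mathbf{I}(\alpha,u)=\widetilde{\mathbf{I}}(\alpha,u)=\emptyset$ are exactly those in which each index $i$ contributes either $x_{i'}$ (with $\alpha_i=0$) or $x_i^{(p-1)}$ (with $i'\notin u$); there are $2^{n}$ of them, they lie in $\ker\Delta$, and I would check that they survive to the quotient $SHO'/[SHO',SHO']$, while $\widetilde{\mathcal{G}}$ together with the $\mathbf{I}(\alpha,u)=\emptyset$, $\widetilde{\mathbf{I}}(\alpha,u)\neq\emptyset$ monomials generate $\overline{SHO}=[SHO',SHO']$. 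The direct-sum decomposition then follows once the count $2^{n}=\dim L'-\dim\bar L$ is read off Lemma~\ref{1.2}(1); recording that the standard grading places the class of $x^{(\alpha)}x^{u}$ in degree $|\alpha|+|u|-2$ shows the $2^{n}$ complementary monomials reach only degree $\xi-n-2$, so the top of $\overline{SHO}$ sits at $\xi-4$. Part (3) is then the two-step derived algebra $SHO=[\overline{SHO},\overline{SHO}]$: by Lemma~\ref{1.2}(1) we have $\dim\bar L-\dim L=1$, so $SHO$ is a graded hyperplane in $\overline{SHO}$ whose missing line is the top piece $\overline{SHO}_{\xi-4}$; this drops the top degree to $\xi-5$ and leaves $SHO_i=\overline{SHO}_i$ for $-1\le i\le\xi-5$.

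The routine part of this program is the operator algebra of the second paragraph. The hard part will be the dimension bookkeeping: matching the explicit spanning sets against the closed formulas of Lemma~\ref{1.2}(1), choosing a monomial order that genuinely makes the $\Gamma_i^{q}$-corrections subleading, and---most delicately---pinning down exactly which extreme monomials are lost when passing first to $\overline{SHO}$ and then to $SHO$, since it is precisely this that fixes the top degrees $\xi-4$ and $\xi-5$ and the graded equalities $SHO_i=\overline{SHO}_i$.
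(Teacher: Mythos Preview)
The paper does not prove this lemma at all: it is imported wholesale from \cite[Theorem~2.7, Propositions~3.1 and~3.4]{lh}. So you are supplying an argument where the paper offers none. Your operator identities ($\Delta_q\nabla_q=\mathrm{id}$ on the domain of $\nabla_q$, $\Delta_j\nabla_q=-\nabla_q\Delta_j$ for $j\neq q$, $\Delta_i\Delta_j=-\Delta_j\Delta_i$, $\Delta^2=0$) are correct and do yield $\widetilde{\mathcal{G}}\subset\ker\Delta$; this is the right technical core and presumably close to what \cite{lh} does.

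There is, however, a genuine gap in your ``unitriangular'' step. You claim one can order the monomials so that every correction $\Gamma_i^{q}f$ is strictly below $f$. But for $f=x^{(\alpha)}x^{u}\in\mathcal{D}^{*}$ with $i\in\mathbf{I}(\alpha,u)$ and $q\in\widetilde{\mathbf{I}}(\alpha,u)$, the correction $\Gamma_i^{q}f=\pm x^{(\alpha-\varepsilon_i+\varepsilon_q)}x^{(u\setminus\{i'\})\cup\{q'\}}$ has the \emph{same} total degree and lies again in $\mathcal{D}^{*}$ (indeed $q\in\mathbf{I}$ and $i\in\widetilde{\mathbf{I}}$ for the new pair). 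Concretely, at the top degree $\xi-4$ the only $\mathcal{D}^{*}$-monomials are $f_k=x^{(\pi-\varepsilon_k)}x^{\omega\setminus\{k'\}}$ ($\omega=\langle1',\dots,n'\rangle$, $k=1,\dots,n$), and one computes $\Gamma_i^{k}f_k=\pm f_i$: the corrections simply permute the $f_k$, so no monomial order can make them strictly smaller. Thus the elements of $\widetilde{\mathcal{G}}$ are not unitriangular, and the set is in any case redundant (different $q$ for the same $f$ give distinct elements with the same would-be leading term). The spanning assertion of (1) is still true, but it requires a different mechanism---e.g.\ a Koszul/K\"unneth computation of $H(\mathcal{O}(n,n),\Delta)$ tensor-factor by tensor-factor, or an explicit contracting homotopy---rather than a leading-term reduction.

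A smaller point: your plan uses the closed formulas of Lemma~\ref{1.2}(1) to finish both (1) and the identifications in (2)--(3). In \cite{lh} those dimension formulas (Theorems~4.1, 4.7) are \emph{derived from} the spanning set of Theorem~2.7, so invoking them to prove (1) reverses the original logic. Within the present paper both are quoted as established facts and no formal circularity arises, but if you intend an independent proof you should compute $\dim\ker\Delta$ by another route (the K\"unneth argument above gives $\dim\ker\Delta=2^{n-1}(p^n+1)$ directly).
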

As for Lemma \ref{th:2.1}(1), we have the following lemma by Lemma \ref{3.5}:
\begin{lemma}\label{3.6}
$SHO_1$ is an irreducible $SHO_0$-module.
\end{lemma}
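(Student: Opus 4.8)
The plan is to deduce the irreducibility of $SHO_1$ over $SHO_0$ from the already-established irreducibility of $SHO_1'$ over the larger algebra $HO_0$ (Lemma \ref{th:2.1}(1)), by showing that the only feature $HO_0$ has beyond $SHO_0$ is a grading element that acts on $SHO_1$ by a scalar. First I would use Lemma \ref{3.5}(3) to identify $SHO_1=\overline{SHO}_1$ concretely and to record the inclusions $SHO_1\subseteq SHO_1'\subseteq HO_1$, so that Lemma \ref{th:2.1}(1), a statement about a space containing $SHO_1$, becomes available.

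The heart of the argument is the Euler element $E=\sum_{i=1}^{n}x_ix_{i'}\in HO_0$. A direct computation gives
$$
D_{HO}(E)=\sum_{i=1}^{n}\big(x_i\partial_i+x_{i'}\partial_{i'}\big)=\frak{D},
$$
so $\mathrm{ad}\,E$ is the degree operator and acts on the $\mathbb{Z}$-graded component $HO_1$, hence on $SHO_1'$ and on $SHO_1$, as a single scalar. I would then verify that $E$ together with $SHO_0$ spans $HO_0$, i.e. $HO_0=SHO_0\oplus\mathbb{F}E$: the space $SHO_0$, which by Lemma \ref{3.5} equals $\overline{SHO}_0$, coincides with $\ker(\Delta|_{HO_0})$ and has codimension one in $HO_0$ (note $\Delta(x_1x_{1'})=1\neq 0$), while $\Delta(E)=n$, so $E$ is a genuine complement precisely when $n\not\equiv 0\ (\mathrm{mod}\ p)$.

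With these two facts in hand the reduction is immediate. Let $N$ be any nonzero $SHO_0$-submodule of $SHO_1$. Because $E$ acts by a scalar, $N$ is automatically $E$-invariant, and since $HO_0=SHO_0+\mathbb{F}E$ it is therefore an $HO_0$-submodule of $SHO_1'$. As $SHO_1'$ is $HO_0$-irreducible by Lemma \ref{th:2.1}(1) and $N\neq 0$, we get $N=SHO_1'$; combined with $N\subseteq SHO_1\subseteq SHO_1'$ this simultaneously forces $SHO_1=SHO_1'$ and shows that $SHO_1$ has no proper nonzero $SHO_0$-submodule.

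I expect the main obstacle to be the structural claim $HO_0=SHO_0\oplus\mathbb{F}E$, and in particular the exceptional case $n\equiv 0\ (\mathrm{mod}\ p)$, where $\Delta(E)=0$ places $E$ inside $\ker\Delta$ so that the Euler element no longer complements $SHO_0$. In that case I would fall back on the direct argument used for Lemma \ref{th:2.1}(1): take the explicit weight basis of $SHO_1$ furnished by Lemma \ref{3.5}, use the weight-space decomposition (cf. Lemma \ref{2.2}) to locate a weight vector in any nonzero submodule, and apply the root vectors $x_ix_j,\ x_{i'}x_{j'},\ x_ix_{j'}$ with $i\neq j$ — all of which lie in $SHO_0=\ker\Delta$ in degree zero — to connect every weight space to a single generating vector whose $U(SHO_0)$-orbit exhausts $SHO_1$. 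The bookkeeping of the $\nabla$-correction terms appearing in $\widetilde{\mathcal{G}}$ is where that computation is most delicate.
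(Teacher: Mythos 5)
The reduction at the heart of your argument rests on the identity $D_{HO}(E)=\frak{D}$ for $E=\sum_{i=1}^{n}x_ix_{i'}$, and that identity is false. From $D_{HO}(f)=\sum_{i=1}^{2n}(-1)^{\tau(i)|f|}\partial_i(f)\partial_{i'}$ one computes, for the odd element $x_jx_{j'}$,
$$
D_{HO}(x_jx_{j'})=x_{j'}\partial_{j'}-x_j\partial_j ,
$$
which is precisely what makes $T_{HO}=\mathrm{span}_{\mathbb F}\{x_jx_{j'}\}$ a torus with $\epsilon_i(x_jx_{j'})=-\delta_{ij}$ on $x_i$. Hence $\mathrm{ad}\,E=\sum_{j}(x_{j'}\partial_{j'}-x_j\partial_j)$ acts on a monomial $x^{(\alpha)}x^u$ by the scalar $|u|-|\alpha|$, not by the $\mathbb Z$-degree $|\alpha|+|u|-2$; on $HO_1$ it has the four distinct eigenvalues $3,1,-1,-3$. (In fact no element of $HO_0$ acts as the grading operator: $D_{HO}(\sum_jc_jx_jx_{j'})=\sum_jc_j(x_{j'}\partial_{j'}-x_j\partial_j)$ can never equal $\sum_ix_i\partial_i$ — the degree derivation is outer.) Consequently an $SHO_0$-submodule $N$ of $SHO_1$ is \emph{not} automatically $E$-invariant, and the step ``$N$ is an $HO_0$-submodule, hence equals $SHO_1'$'' collapses. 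The surrounding structural claims are fine — $HO_0=SHO_0\oplus\mathbb{F}x_1x_{1'}$ (or $\oplus\,\mathbb F E$ when $p\nmid n$), and $SHO_1=SHO_1'$ for $n>3$ since the summand $\{x^{(\alpha)}x^u:\mathbf{I}(\alpha,u)=\widetilde{\mathbf{I}}(\alpha,u)=\emptyset\}$ of Lemma~\ref{3.5}(2) vanishes in degree one ($\alpha_i\le 3<p-1$ and $|u|\le 3<n$) — but neither repairs the reduction.

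What survives is the fallback sketched in your last paragraph: the direct argument using $T_{SHO}$-weight vectors and the root vectors $x_ix_j$, $x_ix_{j'}$, $x_{i'}x_{j'}$ ($i\neq j$) lying in $SHO_0$, applied to the explicit spanning set of $SHO_1$ from Lemma~\ref{3.5}. That is in substance all the paper itself offers (it states the lemma ``as for Lemma~\ref{th:2.1}(1) \ldots by Lemma~\ref{3.5}'', i.e.\ by the same direct computation, without writing out details). But you invoke that route only in the exceptional case $p\mid n$, while for $p\nmid n$ you rely on the scalar-action claim; so as written the proof is incomplete in the generic case. To fix it, either carry out the direct computation for all $n$, or find a genuine argument that every $SHO_0$-submodule of $SHO_1$ is $x_1x_{1'}$-invariant (for instance by showing each $T_{SHO}$-weight space of $SHO_1$ is already a single $T_{HO}$-weight space — which itself needs care, since distinct $T_{HO}$-weights can agree on $T_{SHO}$ when they differ by a multiple of $\epsilon_1+\cdots+\epsilon_n$).
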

\begin{proof}[\textit{Proof of Theorem \ref{03.1}(2)}] It is immediate by Lemmas \ref{1.2} and \ref{3.6}.
\end{proof}
\subsection{$KO(n)$}
Let $L=KO(n)$. Regard $KO(n)$ as a subalgebra of $KO(n+1)$.
To determine all the  MGS of type $(\textrm{I})$ for $L$,
for $i\in \mathbb{N}_0$, $\lambda \in \mathbb{F}$,
set
\begin{flalign}
L_{i,0}=&\Big\{f\in L_{i} \mid  f\in
\mathrm{Ann}(L_{-2})\Big\}=L_i\cap\mathcal{O}(n,n);\nonumber\\
L_{i,0}'=&\Big\{f\in L_{i,0} \mid
\Delta(f)=0\Big\}; \nonumber        \\
L_{i,1}=&\Big\{fx_{2n+1}\in L_i\mid  f\in
\mathrm{Ann}(L_{-2}) \Big \}; \nonumber\\
L_{i}^{\lambda}=&\{f\in L_i\mid \mathrm{\overline{div}}_{\lambda}(f)=0\},\nonumber
\end{flalign}
where $\mathrm{\overline{div}}_{\lambda}=\Delta+(\frak{D}-(\lambda+1)\mathrm{id_{\mathcal{O}}})\partial_{2n+1}$.
Note that
$$L_{i}^{\lambda}=\begin{cases}
 SKO''(n, (\lambda+1)/n)_i, &    ~\mbox{if}~ n\not\equiv 0 (\mathrm{mod} \  p),\\
SKO''(n+1,(\lambda+1)/(n+1))_i\cap KO(n),  &~\mbox{if}~ n\equiv 0 (\mathrm{mod} \  p).
\end{cases}$$
\textit{Hereafter for convenience, we put $z=x_{2n+1}$.}
As for Lemma \ref{th:2.1}, we have:
\begin{lemma}\label{th:2.5}
The following statements hold:
\begin{enumerate}
\item[(1)]$L_{1,0}'=U(L_{0}) D$, for $0\neq D\in L_{1,0}'$.
\item[(2)]$L_{1,0}=U(L_{0}) D$, for $D\in L_{1,0}$ and $\Delta(D)\neq 0$.
\item[(3)]$L_{1,1}=U(L_0)x_{i_0}z,$ for   $i_{0}\in\overline{1,2n}.$
\item[(4)]$L_{1}^{0}=U(L_{0})(x_{i_0}z)+U(L_{0})(x^{(3\varepsilon_{j_{0}})})$, for $i_{0}\in\overline{1,2n}$ and $j_0\in\overline{1,n}$.

If $\lambda\in \mathbb{F}\setminus \{0\}$, then
$$L_{1}^{\lambda}= U(L_{0})(x_{i_0}z+\lambda x_{i_{0}}x_{j_0}x_{{j_0}'}),$$
for $i_{0}\in\overline{1,2n}, j_{0}\in \overline{1,n}\backslash \{i_0\}.$
\end{enumerate}
\end{lemma}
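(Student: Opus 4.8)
The plan is to carry out everything through the weight-space decomposition of $L_1$ with respect to the standard torus $T_{KO}$ (exactly as in Lemma \ref{2.2}), combined with the two parts of Lemma \ref{th3.2}, which let me peel off individual weight vectors from a given generator and then propagate them across the weight diagram. Writing $\deg(x^{(\alpha)}x^{u}z^{c})=|\alpha|+|u|+2c-2$, the degree-one piece splits as $L_{1}=L_{1,0}\oplus L_{1,1}$, where $L_{1,0}$ consists of the cubic elements of $\mathcal{O}(n,n)$ and $L_{1,1}=\mathrm{span}_{\mathbb{F}}\{x_{i}z\mid i\in\overline{1,2n}\}$; this splitting is the structural fact underlying all four assertions. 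The external inputs I would use are that $L_{-1}$ is an irreducible $L_{0}$-module, the torus from Lemma \ref{th:1.2}, and the $HO$-analog Lemma \ref{th:2.1}.

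For (3) I would show that $x_{i}\mapsto x_{i}z$ defines an isomorphism of $L_{0}$-modules from $L_{-1}$ onto $L_{1,1}$ up to a one-dimensional character twist coming from the contact term $2\partial_{2n+1}$: a direct bracket computation shows that the quadratic part $\mathrm{span}_{\mathbb{F}}\{x_{a}x_{b}\}$ of $L_{0}$ acts on the basis $\{x_{i}z\}$ by the same structure constants as on $\{x_{i}\}\subset L_{-1}$, while $z$ acts as a scalar. Since the quadratic part already acts irreducibly on $L_{-1}$, the same holds on $L_{1,1}$, whence $L_{1,1}=U(L_{0})x_{i_{0}}z$ for every $i_{0}$. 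Assertion (1) is the $KO$-analog of the irreducibility of $SHO_{1}'$ in Lemma \ref{th:2.1}(1): here $\Delta$ restricts to an $L_{0}$-module map $L_{1,0}\to L_{-1}$ and $L_{1,0}'=\ker\Delta$, so I would verify through the weight-space argument that the weight spaces of $L_{1,0}'$ are one-dimensional and are linked by nonzero root-vector actions, giving irreducibility; this is where $n>3$ enters (for $n=3$ the module is already reducible, cf.\ the Remark following Lemma \ref{th:2.1}). Assertion (2) mirrors Lemma \ref{th:2.1}(2)--(3): given $D\in L_{1,0}$ with $\Delta(D)\neq 0$, I would first use Lemma \ref{th3.2}(2) to split off a weight vector proportional to $x^{(2\varepsilon_{i_{0}})}x_{i_{0}'}$ (which satisfies $\Delta\neq 0$), and then check that applying root vectors of $L_{0}$ to $x^{(2\varepsilon_{i_{0}})}x_{i_{0}'}$ reaches every cubic weight vector, so $U(L_{0})D=L_{1,0}$.

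Assertion (4) I would reduce to (1)--(3) by computing $\mathrm{\overline{div}}_{\lambda}$ on $L_{1}$. Writing a degree-one element as $f+gz$ with $f$ cubic in $\mathcal{O}(n,n)$ and $g$ linear, the identities $\Delta(gz)=0$, $\partial_{2n+1}(f+gz)=\pm g$ and $\mathfrak{D}g=g$ give $\mathrm{\overline{div}}_{\lambda}(f+gz)=\Delta(f)\mp\lambda g$. Hence for $\lambda=0$ the condition $\mathrm{\overline{div}}_{0}=0$ decouples into $\Delta(f)=0$ with $g$ free, so $L_{1}^{0}=L_{1,0}'\oplus L_{1,1}$ and, since $\Delta(x^{(3\varepsilon_{j_{0}})})=0$, assertions (1) and (3) give $L_{1}^{0}=U(L_{0})(x^{(3\varepsilon_{j_{0}})})+U(L_{0})(x_{i_{0}}z)$. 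For $\lambda\neq 0$ the condition forces $g=\pm\lambda^{-1}\Delta(f)$, so $L_{1}^{\lambda}$ is cyclic with a single generator; taking $f=x_{i_{0}}x_{j_{0}}x_{j_{0}'}$ gives $\Delta(f)=\pm x_{i_{0}}$ and, after scaling by $\lambda$, the generator $x_{i_{0}}z+\lambda x_{i_{0}}x_{j_{0}}x_{j_{0}'}$, with $L_{1}^{\lambda}=U(L_{0})(x_{i_{0}}z+\lambda x_{i_{0}}x_{j_{0}}x_{j_{0}'})$ following from the irreducibility established as in (1)--(2).

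The main obstacle I expect is the characteristic-$p$ bookkeeping in the weight-space arguments for (1) and (2): one must check that every edge of the weight diagram of the cubic module, and of the constrained module $L_{1,0}'=\ker\Delta$, is realized by a root-vector action whose structure constant is nonzero modulo $p$, so that no weight vector becomes unreachable. The divided-power coefficients and the single linear constraint $\Delta f=0$ make these multiplicities and structure constants delicate, and it is precisely here that $p>3$ and $n>3$ are needed. Once this propagation is secured, (4) is a formal consequence of the $\mathrm{\overline{div}}_{\lambda}$ computation and the dichotomy at $\lambda=0$.
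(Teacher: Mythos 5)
Your proposal is correct and follows exactly the route the paper intends: the paper offers no written proof of this lemma (it is presented as the $KO$-analogue of Lemma 3.2, itself justified only as ``a direct computation''), and your decomposition $L_{1}=L_{1,0}\oplus L_{1,1}$, the identification of $L_{1,0}$ and $L_{1,0}'$ with $HO_{1}$ and $SHO_{1}'$ as modules over the quadratic part of $L_{0}$ (with $z$ acting by the scalar $-1$ on all of $L_{1}$), the torus/weight-vector propagation, and the computation $\overline{\mathrm{div}}_{\lambda}(f+gz)=\Delta(f)\mp\lambda g$ reducing (4) to (1)--(3) are precisely the omitted details. The only wording to adjust is that for $\lambda\neq 0$ the module $L_{1}^{\lambda}$ is cyclic rather than irreducible (it still contains $L_{1,0}'$); what you actually use, correctly, is that the projection onto $L_{1,0}$ is an $L_{0}$-isomorphism carrying your generator to an element with $\Delta\neq 0$, so part (2) applies.
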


\begin{lemma}\label{th:2.4}
The following statements hold:
\begin{enumerate}
\item[(1)]$L_{1,0}'$ and $L_{1,1}$
are irreducible $L_{0}$-submodules of $L_1$.
\item[(2)]$L_{1,0}$ is a maximal $L_{0}$-submodule of $L_{1}$.
\item[(3)]$L_{1}^{\lambda}$ is a maximal $L_{0}$-submodule of $L_{1}$, for any $\lambda\in \mathbb{F}$.
\end{enumerate}
\end{lemma}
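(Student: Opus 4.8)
The plan is to deduce all three statements from the cyclic generation in Lemma~\ref{th:2.5}, the weight data in Lemma~\ref{2.2}, and the irreducibility of the $L_0$-module $L_{-1}$ recorded in the introduction; throughout I write $z=x_{2n+1}$. \emph{Part (1).} The irreducibility of $L_{1,0}'$ is immediate from Lemma~\ref{th:2.5}(1): a module equal to $U(L_0)D$ for \emph{every} nonzero $D$ has no proper nonzero submodule. For $L_{1,1}$, a degree count gives $L_{1,1}=\mathrm{span}_{\mathbb{F}}\{x_iz\mid i\in\overline{1,2n}\}$, and by Lemma~\ref{2.2} together with the $T$-invariance of $z$ these monomials span $2n$ pairwise distinct one-dimensional weight spaces (of weights $\epsilon_i$). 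Hence any nonzero $L_0$-submodule, being $T$-graded, contains some $x_{i_0}z$, and then equals $U(L_0)x_{i_0}z=L_{1,1}$ by Lemma~\ref{th:2.5}(3).

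\emph{Part (2).} Here $L_{1,0}=L_1\cap\mathcal{O}(n,n)$ is an $L_0$-submodule (the annihilator of $L_{-2}$ is $L_0$-stable), and $L_1=L_{1,0}\oplus L_{1,1}$ as $T$-modules, so the quotient $L_1/L_{1,0}\cong L_{1,1}$ again carries $2n$ distinct one-dimensional weights. If $L_{1,0}\subsetneq N\subseteq L_1$ is an $L_0$-submodule, its image in the quotient is a nonzero $T$-graded subspace, hence contains the class of some $x_{i_0}z$; since $L_{1,0}\subseteq N$ this yields $x_{i_0}z\in N$, whence $N\supseteq U(L_0)x_{i_0}z+L_{1,0}=L_1$ by Lemma~\ref{th:2.5}(3). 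Thus $L_1/L_{1,0}$ is irreducible and $L_{1,0}$ is maximal.

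\emph{Part (3).} First, $L_1^{\lambda}=\ker(\overline{\mathrm{div}}_{\lambda}|_{L_1})$ is an $L_0$-submodule: it is stable under the subalgebra $\ker(\overline{\mathrm{div}}_{\lambda}|_{L_0})$ because $\ker\overline{\mathrm{div}}_{\lambda}$ is a graded subalgebra, and it is stable under $z$, which acts on $L_1$ as the scalar $-\mathrm{id}$ (the element $z$ induces the contact grading, so $\mathrm{ad}\,z=-k\,\mathrm{id}$ on $L_k$). A direct computation shows $\overline{\mathrm{div}}_{\lambda}|_{L_1}$ maps onto $L_{-1}$ (for instance $\overline{\mathrm{div}}_{\lambda}(x^{(2\varepsilon_i)}x_{i'})=x_i$ and $\overline{\mathrm{div}}_{\lambda}(x_ix_{i'}x_{j'})=\pm x_{j'}$), so the quotient $Q:=L_1/L_1^{\lambda}$ is $2n$-dimensional and has $2n$ pairwise distinct one-dimensional weight spaces. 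Finally I would show $Q$ is irreducible: every nonzero weight element $e$ of $L_0$ lies in $\ker\overline{\mathrm{div}}_{\lambda}$ (as $\overline{\mathrm{div}}_{\lambda}|_{L_0}$ is a character, vanishing on $[L_0,L_0]$), and for such $e$ the divergence Leibniz identity gives $\overline{\mathrm{div}}_{\lambda}(e\cdot f)=e\cdot\overline{\mathrm{div}}_{\lambda}f$; hence $\overline{\mathrm{div}}_{\lambda}$ intertwines the action of all root vectors on $Q$ and on $L_{-1}$. Since $L_{-1}$ is irreducible, its weight vectors are connected by such root vectors, so the same holds in $Q$; as $Q$ has distinct one-dimensional weight spaces, any nonzero $L_0$-submodule contains a weight vector and hence all of them. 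Thus $Q$ is irreducible and $L_1^{\lambda}$ is maximal for every $\lambda$.

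The main obstacle is the grading element $z$. Because $\overline{\mathrm{div}}_{\lambda}(z)=-(\lambda+1)$ and $z$ acts by the scalar $-1$ on $L_1$ but by $+1$ on $L_{-1}$, there is \emph{no} nonzero $L_0$-module homomorphism $L_1\to L_{-1}$, so one cannot simply identify $Q$ with $L_{-1}$; the argument must separate the scalar (central) action of $z$ from the genuinely noncentral part and invoke equivariance of $\overline{\mathrm{div}}_{\lambda}$ only on $\ker\overline{\mathrm{div}}_{\lambda}\supseteq[L_0,L_0]$. Establishing the divergence Leibniz identity, that $\overline{\mathrm{div}}_{\lambda}|_{L_0}$ is a character, and the relation $\mathrm{ad}\,z=-k\,\mathrm{id}$ on $L_k$ are the load-bearing computations; the weight-multiplicity inputs for parts (1)--(2) are routine from Lemma~\ref{2.2}.
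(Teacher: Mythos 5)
Your parts (1) and (2) are exactly the arguments the paper has in mind (it dismisses them with ``readily seen by Lemmas \ref{2.2} and \ref{th:2.5}''): distinct one--dimensional $T$-weights in $L_{1,1}$, respectively in $L_1/L_{1,0}$, plus the cyclic generation statements of Lemma \ref{th:2.5}. For part (3) you take a genuinely different route. The paper argues pointwise: given $D\in M\setminus L_1^{\lambda}$, it uses surjectivity of $\overline{\mathrm{div}}_{\lambda}$ from the span of an explicit set $\frak{A}$ of monomials onto $L_{-1}$ to replace $D$ by some $0\neq E\in\mathrm{span}_{\mathbb F}\frak{A}\cap M$, normalizes $E=x_1x_2x_{2'}$ by weight arguments, concludes $L_{1,0}=U(L_0)E\subset M$ by Lemma \ref{th:2.5}(2), and then $M=L_1$. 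You instead identify the quotient $L_1/L_1^{\lambda}$ with $L_{-1}$ as a module over the root part of $L_0$ via $\overline{\mathrm{div}}_{\lambda}$ and invoke irreducibility of $L_{-1}$; this is structurally cleaner and explains \emph{why} $L_1^{\lambda}$ is maximal, but it needs the equivariance identity $\overline{\mathrm{div}}_{\lambda}([e,f])=\pm[e,\overline{\mathrm{div}}_{\lambda}(f)]$ for root vectors $e$, which the paper never has to touch (it only needs the much weaker fact that $L_1^{\lambda}$ is a submodule). That identity does hold (it is the Laplacian identity underlying the fact that $SKO''$ is a subalgebra, noting that for a root vector $e=x_ix_j$, $j\neq i'$, one has $[e,-]=D_{HO}(e)$ and $\Delta(e)=0$), so your plan goes through, but as you say it is load-bearing and must actually be verified.

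Two localized flaws in your part (3), both repairable. First, $\overline{\mathrm{div}}_{\lambda}|_{L_0}$ is \emph{not} a character vanishing on $[L_0,L_0]$: for instance $[x_1x_2,x_{1'}x_{2'}]$ is a combination of $x_1x_{1'}$ and $x_2x_{2'}$ and has nonzero image under $\Delta$. The conclusion you need --- that every nonzero-weight vector of $L_0$ lies in $\ker\overline{\mathrm{div}}_{\lambda}$ --- is nevertheless true, for the weight reason: $\overline{\mathrm{div}}_{\lambda}$ preserves $T$-weights and $L_{-2}$ is concentrated in weight zero; so replace the character argument by this. Second, your proof that $L_1^{\lambda}$ is $L_0$-stable uses $L_0=\ker(\overline{\mathrm{div}}_{\lambda}|_{L_0})+\mathbb{F}z$, which fails precisely when $\lambda=-1$ (there $\overline{\mathrm{div}}_{\lambda}(z)=-(\lambda+1)=0$ while $\overline{\mathrm{div}}_{\lambda}(x_1x_{1'})\neq0$). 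Use $T$ instead of $z$ as the complement: $L_1^{\lambda}$ is a sum of weight spaces, hence automatically $T$-stable, and $L_0=\ker(\overline{\mathrm{div}}_{\lambda}|_{L_0})+T$ for every $\lambda$.
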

\begin{proof}
(1) and (2) can be readily seen by Lemmas \ref{2.2} and \ref{th:2.5}. We only need to show (3).
Obviously  $L_{1}^{\lambda}$ is a proper $L_0$-submodule.
Suppose $M$ is an $L_{0}$-submodule of $L_{1}$ containing $L_{1}^{\lambda}$ properly.
Then there exists $D\in M\backslash L_{1}^{\lambda}$.
Put
$$\frak{A}  =\left\{x_{i}x_{2}x_{2'}\in L_{1} \mid i\in \overline{1,2n}\ \backslash\{{2,2'}\} \right\} \cup
\left\{x_{2}x_{1}x_{1'},x_{2'}x_{1}x_{1'}\right \}.$$
Since $\mathrm{\overline{div}}_{\lambda}$ is a surjective map from
$\mathrm{span}_{\mathbb{F}}\{\frak {A}\}$ onto $L_{-1}$,
there is $E \in \mathrm{span}_{\mathbb{F}}\{\frak{A}\}$
such that
$$
\mathrm{\overline{div}}_{\lambda}(E)=\mathrm{\overline{div}}_{\lambda}(D).
$$
As a result,
$E-D\in L_{1}^{\lambda}\subset M.$
Thus $0\neq E \in M$.
Moreover, by Lemmas \ref{2.2} and \ref{th3.2}, we may assume that
 $E=x_1x_2x_{2'}$. By Lemma \ref{th:2.5}, we can see that $L_{1,0}\subset M$.
 Moreover we have $M=L$.
Consequently,
$L_{1}^{\lambda}$ is a maximal $L_{0}$-submodule of $L_{1}$.
\end{proof}
\begin{proof}[\textit{Proof of Theorem \ref{03.1}(3)}]
Suppose
$$
\overline{M}=L_{-2}+L_{-1}+L_{0}+L_{1,0}+L_{2,0}+\cdots+L_{\xi-2,0}
$$
and
$$
M^{\lambda}=L_{-2}+L_{-1}+L_{0}+L^{\lambda}_{1}+\cdots+L^{\lambda}_{\xi-2},
$$
for $\lambda\in \mathbb{F}$.
Let $M$ be an MGS of type $(\textrm{I})$ for $L$.
Obviously, $M_1\neq 0$, since $L$ is transitive. Observe that
if there exists $\lambda\in \mathbb{F}$ such that $M_1\subset L_1^{\lambda}$,
then $M\subset M^{\lambda}$. Indeed, if $M\nsubseteq M^{\lambda}$,
then there exists $D\in M$ such that $\mathrm{\overline{div}}_{\lambda}(D)\neq 0$.
Further, there exists $D'\in M_1$ such that $\mathrm{\overline{div}}_{\lambda}(D')\neq 0$,
which contradicts to $M_1\subset L_1^{\lambda}$.
So $M\subset M^{\lambda}$.
From now on, for any $\lambda\in \mathbb{F}$, suppose $M_1\nsubseteq L_1^{\lambda}$.
Then for  $\lambda_0\in \mathbb{F}$,
 there exists $D\in M_1$ such that
$\mathrm{\overline{div}}_{\lambda_0}(D)\neq 0$.
 Assert that $D\in L_{1,0}$.
Assume conversely that  $D\notin L_{1,0}$.
Then, by Lemmas \ref{2.2} and \ref{th3.2}, their exists
$$D'=x_iz+a_1x_ix_1x_{1'}+\cdots+a_ix^{(2\varepsilon_i)}x_{i'}+\cdots+a_nx_ix_nx_{n'}\in M_1, $$
where  $a_j\in \mathbb{F}$, $i,j\in \overline{1,n}$.
If $a_1+\cdots+a_n\neq 0$, then
$D'\in L_1^{a_1+\cdots+a_n}$. By Lemma \ref{th:2.5}(4), we have
$L_1^{a_1+\cdots+a_n}\subset M_1.$
Thus by Lemma \ref{th:2.4}, we have $M_1=L_1^{a_1+\cdots+a_n}$, which is contradict to our assumption that  $M_1\nsubseteq L_1^{\lambda}$,
for any $\lambda\in \mathbb{F}$.
If $a_1+\cdots+a_n=0$, then $L_{1,1}\subset M$.
 We can see that $L_{1,1}\subsetneq M_1$, by our assumption. So there exists $0 \neq D''\in M_1\cap L_{1,0}'$, by Lemma \ref{th:2.5}(2) and the maximality of $M$.
Thus $L_1^0= M_1$, which is contradict to our assumption that  $M_1\nsubseteq L_1^{\lambda}$,
for any $\lambda\in \mathbb{F}$.
So $D\in L_{1,0}$. By Lemma \ref{th:2.5}, $L_{1,0}\subset M_1$, since $\mathrm{\overline{div}}_{\lambda_0}(D)\neq 0$.
Now we will show that $M \subset \overline{M}$. Assume there exists $\widetilde{D} \in M\backslash\ \overline M$.
Clearly, $\widetilde{D}$ can be written in the form $\widetilde{D}=u_{1}+u_{2}z$,
where $u_{1}, u_{2}\in \mathrm{Ann}(L_{-2})$ with
$u_2 \neq 0$.
Further,
there exists
$$
\widetilde{D}'=u'_1+u_2'z\in M_1\setminus\overline{M}_1,
$$
where $u'_1,  u'_2\in \mathrm{Ann}(L_{-2})$ with $u'_2\neq 0$.
Since $L_{1,0}\subset \overline{M} \bigcap M$,
we have $0 \neq u_{2}'z \in M\setminus L_{1,0}$.
Hence $M_1=L_1$. As a result,
$M=L$, which is contradict to the maximality of $M$. So $M=\overline{M}$.
Furthermore we have
$$
\{\overline{M}, M^\lambda \mid \lambda\in\mathbb{F}\}
$$
are all the MGS of type $(\textrm{I})$ for $L$.
\end{proof}
\subsection{$SKO(n,\lambda)$}
Let $L=SKO(n,\lambda)$.
Firstly, we introduce the following symbols for $q\in \widetilde{\mathbf{I}}(\alpha, u)$:
\begin{align}
A(\alpha,u,q)&=x^{(\alpha)}x^u-\sum_{i\in\mathbf{I}(\alpha,u)}\Gamma_i^q(x^{(\alpha)}x^u)\nonumber\\
B(\alpha,u,\lambda,q)&=(-1)^{|u|}(n\lambda-\mathrm{zd}(x^{(\alpha)}x^u))\nabla_q(x^{(\alpha)}x^u),\nonumber\\
E(\alpha,u)&=x^{(\alpha)}x^u,\nonumber\\
E(\alpha,u,2n+1)&=x^{(\alpha)}x^ux_{2n+1},\nonumber\\
E(\alpha,u,2n+1,q)&=x^{(\alpha)}x^ux_{2n+1}+B(\alpha,u,\lambda,q),\nonumber\\
G(\alpha,u,2n+1,q)&=A(\alpha,u,q)x_{2n+1}+B(\alpha,u,\lambda,q),\nonumber
\end{align}
and
\begin{align}
S_1&=\{E(\alpha,u)\mid \mathbf{I}(\alpha,u)=\emptyset,(\alpha,u)\neq(0,0)\},\nonumber\\
S_2&=\{A(\alpha,u,q)\mid x^{(\alpha)}x^u\in\mathcal{D}^*, q\in\widetilde{\mathbf{I}}(\alpha,u)\}, \nonumber\\
S_3&=\{E(\alpha,u,2n+1,q(\alpha,u))\mid\mathbf{I}(\alpha,u)=\emptyset,
\widetilde{\mathbf{I}}(\alpha,u)\neq \emptyset\},\nonumber\\
S_4&=\{G(\alpha,u,2n+1,q)\mid x^{(\alpha)}x^u\in\mathcal{D^*},
q\in\widetilde{\mathbf{I}}(\alpha,u)\}, \nonumber\\
S_5&=\{E(\alpha,u,2n+1)\mid\mathbf{I}(\alpha,u)=\emptyset,
\widetilde{\mathbf{I}}(\alpha,u)\neq \emptyset, n\lambda=\mathrm{zd}(x^{\alpha}x^u) ~\mbox{in}~ \mathbb{F}\},\nonumber
\end{align}
where $\mathbf{I}(\alpha, u)$, $\mathbf{\widetilde{I}}(\alpha, u)$ and $\mathcal{D^*}$ are defined as in Section 3.2, and $q(\alpha,u)=\min \widetilde{\mathbf{I}}(\alpha,u)$.
Put $G=G(\pi-\varepsilon_1,\langle2',\ldots, n'\rangle,2n+1,1)$, where $\pi=(\pi_1, \ldots, \pi_n)$, in which $\pi_i=p-1$ for $i\in\overline{1,n}$.
Given $r\in\overline{1,n}$, let
$$
\mathbf{J}(r)=\{(i_1,\ldots,i_r)\mid 1\leq i_1<\cdots<i_r\leq n)\}, \mathbf{J}(0)=\emptyset.
$$
For $(i_1,\ldots,i_r)\in \mathbf{J}(r)$, $r\in\overline{0,n}$, let
$$
X(i_1,\ldots,i_r)=x^{(\pi_{i_1}\varepsilon_{i_1}+\cdots+\pi_{i_r}\varepsilon_{i_r})}x^{\langle1',\ldots,n'\rangle-\langle i_{1'},\ldots,i_{r'}\rangle}.
$$
Given $\lambda\in \mathbb{F}$ and $l\in \mathbb{Z}$, put
$$
\frak{G}_l(\lambda,n)=\{k\in \overline{0,n}\mid n\lambda-n+2k+l=0\in \mathbb{F}\}.
$$
Put
$$\delta'_{n\lambda, -1}=
\begin{cases}
1, &  n\lambda+1=0 ~\mbox{in}~ \mathbb{F},\\
0, &  n\lambda+1\neq0 ~\mbox{in}~ \mathbb{F}.
\end{cases}$$
The following lemma follows from \cite[Theorems 2.6 and 2.7]{ly1}.
\begin{lemma} \label{3.11}
The following statements hold:
\item[(1)]$SKO''$ is spanned by $\{1\}$ and $\cup_{i=1}^5S_i$.
\item[(2)] If $n\lambda+1\neq 0 ~\mbox{in}~ \mathbb{F}$ or $\frak{G}_2(\lambda,n)=\emptyset$,
    $$
    SKO''=SKO'\oplus\mathrm{span_{\mathbb{F}}}S_5\oplus\sum_{r\in\frak{G}_2(\lambda,n)}\sum_{(i_1,\ldots,i_r)\in \mathbf{J}(r)}\mathbb{F}X(i_1,\ldots,i_r).
    $$
If $n\lambda+1= 0 ~\mbox{in}~ \mathbb{F}$ or $\frak{G}_2(\lambda,n)\neq\emptyset$,
$$
SKO''=SKO'\oplus\mathrm{span_{\mathbb{F}}}S_5\oplus\sum_{r\in\frak{G}_2(\lambda,n)}\sum_{(i_1,\ldots,i_r)\in \mathbf{J}(r)}\mathbb{F}X(i_1,\ldots,i_r)\oplus\mathbb{F}G.
$$
\item[(3)] $SKO'=SKO\oplus\mathbb{F}\delta'_{n\lambda,-1}G$. \qed
\end{lemma}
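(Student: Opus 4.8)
The proof is a direct analysis of the kernel $SKO''=\ker\mathrm{div}_\lambda$ followed by a computation of its derived series. Throughout I would write every $u\in KO(n)$ uniquely as $u=f+gz$ with $f,g\in\mathcal{O}(n,n)$ and $z=x_{2n+1}$. First I would reduce the defining condition $\mathrm{div}_\lambda(u)=0$ to a pair of equations on $f$ and $g$. Since $\Delta$ and $\frak{D}$ involve only $\partial_1,\ldots,\partial_{2n}$ while $\partial_{2n+1}(gz)=(-1)^{|g|}g$, a short computation gives $\mathrm{div}_\lambda(f+gz)=(-1)^{|u|}2\big[(\Delta f+(-1)^{|g|}(\frak{D}-n\lambda)g)+(\Delta g)z\big]$, so that $u\in SKO''$ iff $\Delta g=0$ and $\Delta f=-(-1)^{|g|}(\frak{D}-n\lambda)g$. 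Thus the whole kernel is governed by the Laplacian $\Delta$ and the Euler operator $\frak{D}$ on $\mathcal{O}(n,n)$, and the problem becomes one in the modular harmonic theory of the pair $(\Delta,\nabla_q)$; here $\nabla_q$ raises $\frak{D}$-degree by $2$, each $\Delta_i$ lowers it by $2$, and $\Gamma_i^q=\nabla_q\Delta_i$ preserves degree.

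For the spanning statement (1) I would treat $g=0$ and $g\neq0$ separately. When $g=0$ the condition collapses to $\Delta f=0$, and by the $SHO$ machinery already recorded in Lemma \ref{3.5} the harmonic part of $\mathcal{O}(n,n)$ is spanned by the monomials $E(\alpha,u)$ with $\mathbf{I}(\alpha,u)=\emptyset$ (the set $S_1$) together with the corrected elements $A(\alpha,u,q)$ with $x^{(\alpha)}x^u\in\mathcal{D}^*$ (the set $S_2$). When $g\neq0$ one forces $g\in\ker\Delta$ and then must solve the inhomogeneous equation $\Delta f=-(-1)^{|g|}(\frak{D}-n\lambda)g$ for a harmonic homogeneous $g$ of $\frak{D}$-degree $d$; since $\frak{D}g=dg$ the right-hand side is the scalar $-(-1)^{|g|}(d-n\lambda)$ times $g$. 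The operator $\nabla_q$ supplies an explicit partial inverse of $\Delta$ on such elements, so the correction $B(\alpha,u,\lambda,q)$ is precisely the $f$ solving the equation; this yields $S_3$ (for harmonic $g=x^{(\alpha)}x^u$) and $S_4$ (for $g=A(\alpha,u,q)$). Finally, when the scalar $d-n\lambda$ vanishes the right-hand side is $0$, the correction $B$ drops out, and $u=gz$ itself lies in $SKO''$; these are the elements of $S_5$, subject to the degree condition $n\lambda=\mathrm{zd}(x^{(\alpha)}x^u)$. Collecting the cases and adjoining the constant $1$ gives assertion (1).

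For the decomposition statements (2) and (3) I would compute the derived algebras $SKO'=[SKO'',SKO'']$ and $SKO=[SKO',SKO']$ directly on the spanning set of (1). The inclusions ``$\supseteq$'' are routine: every generator outside the very top degrees is realized as a bracket of two lower elements via the transitivity and the bracket formula of $KO(n)$. The real content is the ``$\subseteq$'' direction, i.e.\ pinning down the complement. Here the top-degree elements behave exceptionally: the elements $E(\alpha,u,2n+1)\in S_5$, the maximal-power elements $X(i_1,\ldots,i_r)$ attached to $\frak{G}_2(\lambda,n)$, and the single element $G$ cannot be written as brackets, and this must be verified by a weight/degree count showing that no pair of homogeneous elements of strictly lower degree brackets onto them. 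The appearance of the $X$'s is controlled by $\frak{G}_2(\lambda,n)$, the degrees $k$ at which the defining scalar $n\lambda-n+2k+2$ vanishes in $\mathbb{F}$, while the survival of $G$ in $SKO'$ but not in $SKO$ is governed by $\delta'_{n\lambda,-1}$, that is, by whether $n\lambda+1=0$. Separating these exceptional elements from the derived algebra produces the two displayed direct-sum decompositions.

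The genuine difficulty is the modular harmonic theory underlying the case $g\neq0$: over a field of characteristic $p$ in the finite divided-power setting $\Delta$ and $\nabla_q$ no longer form a characteristic-zero $\mathfrak{sl}_2$-triple with $\ker\Delta\cap\mathrm{im}\,\Delta=0$, so the solvability of $\Delta f=c\,g$ for harmonic $g$ is degree-sensitive and fails precisely for the maximal-power (purely harmonic) elements $X(i_1,\ldots,i_r)$ unless the scalar $d-n\lambda$ vanishes. Tracking exactly which top elements are recoverable, that is, deriving the conditions encoded by $\frak{G}_2(\lambda,n)$ and $\delta'_{n\lambda,-1}$, is the crux; once this bookkeeping is in place all three assertions follow. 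The complete computation is carried out in \cite{ly1}, whose Theorems 2.6 and 2.7 we invoke for the final count.
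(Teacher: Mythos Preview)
The paper does not give an independent proof of this lemma: it simply records the statement and cites \cite[Theorems 2.6 and 2.7]{ly1}, as indicated by the \qed\ immediately following the statement. Your proposal ultimately does the same thing---after sketching the harmonic-analysis heuristics behind the spanning set and the derived-series computation, you too invoke \cite{ly1} for the actual verification---so your approach agrees with the paper's, only with more explanatory detail added on top.
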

Recall that $SKO_{1,0}$ and $SKO_{1,1}$ are defined as in Theorem \ref{03.2}.
By Lemma \ref{3.11} and a direct calculation, we have the following lemma:
\begin{lemma}\label{3.10}
Suppose $L=SKO(n, \lambda)$.
\begin{enumerate}
\item[(1)]$L_{1,0}$
is the unique proper $L_0$-submodule of $L_1$, if $1+n\lambda\neq0$ in $\mathbb{F}$.
\item[(2)]$L_1$ has exactly two proper $L_0$-submodules $L_{1,0}$ and $L_{1,1}$, if $1+n\lambda=0$ in $\mathbb{F}$.
\end{enumerate}
\end{lemma}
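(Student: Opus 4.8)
The plan is to read the degree-one structure of $L_1=SKO_1$ off Lemma \ref{3.11} and then analyze the lattice of $L_0$-submodules of $L_1$, splitting the argument according to whether $1+n\lambda=0$; the computation parallels the $KO$ case treated in Lemma \ref{th:2.4}.

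\emph{Degree-one description.} First I would restrict the spanning set of Lemma \ref{3.11}(1) to the component of $\mathbb{Z}$-degree $1$. The elements coming from $S_1$ and $S_2$ lie in $\mathcal{O}(n,n)$ and span $L_{1,0}=L_1\cap\mathcal{O}(n,n)$, namely the pure cubics $P$ with $\Delta(P)=0$. Any degree-one element with nonzero $z$-part must involve a monomial $x^{(\alpha)}x^u$ with $\mathrm{zd}(x^{(\alpha)}x^u)=-1$, i.e. a linear $f$, and such $z$-terms enter only through $S_3$, $S_4$ and $S_5$. The decisive point is the scalar in the correction $B(\alpha,u,\lambda,q)=(-1)^{|u|}\big(n\lambda-\mathrm{zd}(x^{(\alpha)}x^u)\big)\nabla_q(x^{(\alpha)}x^u)$ attached to such a monomial: in degree one it equals $(-1)^{|u|}(1+n\lambda)\nabla_q(f)$. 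Thus every degree-one $z$-element carries a cubic tail proportional to $1+n\lambda$, which yields the dichotomy. If $1+n\lambda\neq 0$ there is no pure $z$-element in $L_1$, and $L_1=L_{1,0}\oplus C$ where $C$ maps isomorphically onto the natural module $L_{-1}$ under $P+(\text{$z$-part})\mapsto\Delta(P)$, so that $L_1/L_{1,0}\cong L_{-1}$ is a nonsplit extension. If $1+n\lambda=0$ the tails vanish, $S_5$ supplies the pure $z$-elements, and $L_1=L_{1,0}\oplus L_{1,1}$ with $L_{1,1}=\{fz\in L_1\}$.

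\emph{The candidate submodules.} Next I would verify that $L_{1,0}$ is an $L_0$-submodule in all cases, being the intersection of $L_1$ with the $L_0$-stable subspace $\mathrm{Ann}(L_{-2})=\mathcal{O}(n,n)$, and that, when $1+n\lambda=0$, $L_{1,1}$ is an $L_0$-submodule as well: bracketing $L_0$ against $fz$ produces only terms of the shape $(\text{linear})\,z$ together with tails that are exactly the now-vanishing $B$-corrections, so the $z$-part closes. I would then show that $L_{1,0}$ is an irreducible $L_0$-module, arguing as for $SHO_1$ in Lemma \ref{3.6}: one exhibits a cyclic generator $L_{1,0}=U(L_0)D$ for a suitable weight vector $D$, in the spirit of Lemma \ref{th:2.5}, using the weight-space dimension count of Lemma \ref{2.2}. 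The quotient $L_1/L_{1,0}\cong L_{-1}$ is irreducible, and in the decoupled case so is $L_{1,1}\cong L_{-1}$.

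\emph{Counting submodules.} Since $T\subseteq L_0$, every $L_0$-submodule $M$ of $L_1$ is a sum of $T$-weight spaces; combined with Lemma \ref{th3.2} this extracts, from any $D\in M$ with nonzero $z$-part, a weight vector in $M$ whose $z$-part is nonzero. When $1+n\lambda\neq 0$ such a vector has a nonzero cubic tail, so applying $U(L_0)$ spreads it over all weights and yields a generator of $L_1$, forcing $M=L_1$; hence every proper $M$ lies in $L_{1,0}$, and irreducibility gives $M=L_{1,0}$, proving (1). When $1+n\lambda=0$ we have $L_1=L_{1,0}\oplus L_{1,1}$ with both summands irreducible, and since $\dim L_{1,0}\neq\dim L_{1,1}=2n$ they are non-isomorphic, so $L_1$ carries no diagonal submodule and its only proper submodules are the two summands, proving (2). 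The main obstacle I expect is the irreducibility of $L_{1,0}$ as an $L_0$-module, together with the non-isomorphism $L_{1,0}\not\cong L_{1,1}$ in the decoupled case that pins the count at exactly two; the coupling scalar $1+n\lambda$ that drives the whole dichotomy is handed to us by the $B$-terms of Lemma \ref{3.11}, and once the irreducibility and the weight-space dimensions of Lemma \ref{2.2} are in place the generation steps are routine.
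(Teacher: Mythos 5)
Your proposal is correct and follows essentially the same route as the paper, which proves this lemma simply by invoking Lemma \ref{3.11} and a ``direct calculation''; your reading of the degree-one part of the spanning sets, with the scalar $n\lambda-\mathrm{zd}(x^{(\alpha)}x^u)=n\lambda+1$ in the $B$-correction driving the dichotomy, is precisely that calculation made explicit. The supporting facts you lean on (irreducibility of $L_{1,0}$ as in Lemmas \ref{th:2.4} and \ref{3.6}, the weight-space counts of Lemma \ref{2.2}, and the non-isomorphism of the two summands by dimension in the decoupled case) are exactly the ingredients the paper's surrounding lemmas supply.
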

\begin{proof}[\textit{Proof of Theorem \ref{03.1}(4)}] It is immediate by Lemma \ref{3.10}.
\end{proof}

\section{MGS of type (\MyRoman{2})}
Let $L$ be a Lie superalgebra of odd Cartan type.
In this section, we determine the MGS, $M$, of type (\MyRoman{2}) for $L$,
that is, $M_{-1}$ is a nontrivial subspace of $L_{-1}$.
Suppose $V$ is a nontrivial subspace of $L_{-1}$.
We introduce the following notions：
\begin{flalign}
 &M^{L}_{-1}(V)=V,                                                                      \nonumber \\
 &M^{L}_{-2}(V)=[M^{L}_{-1}(V),M^{L}_{-1}(V)],                                          \nonumber   \\
 &M^{L}_i(V)=\{u\in L_{i}\mid [u,V]\subset M^{L}_{i-1}(V)\}, \  \mbox{ for }i\geq0,     \nonumber \\
 &M^{L}(V)=\oplus_{i\geq -2}M^{L}_i(V).                                                  \nonumber
\end{flalign}
Without confusion, we usually omit the superscript $L$ of  $M^{L}_i(V)$ and $ M^{L}(V)$.
It is easy to verify the following lemma:
\begin{lemma}
Let $V$ be a nontrivial subspace of $L_{-1}$. Then
\begin{enumerate}
\item[(1)] $ M_0(V)$ is a subalgebra of $L_0$.
\item[(2)] $M_i(V)$ is an $M_0(V)$-module, for any $i\geq -2$.
\item[(3)]$[M_i(V),M_j(V)]\subset M_{i+j}(V)$, for any $ i,j\geq-2$.
\item[(4)] $M(V)$ is a subalgebra of $L$.\qed
\end{enumerate}
\end{lemma}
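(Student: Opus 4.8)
The plan is to reduce parts (1), (2) and (4) to part (3), which contains all the content. Taking $(i,j)=(0,0)$ in (3) gives $[M_0(V),M_0(V)]\subset M_0(V)$, and together with $M_0(V)\subset L_0$ (immediate from the definition) this is (1); taking $(i,j)=(0,i)$ gives $[M_0(V),M_i(V)]\subset M_i(V)$, which is precisely (2); and (4) follows at once, since $M(V)=\oplus_{i\geq-2}M_i(V)$ is $\mathbb{Z}$-graded and, by (3), closed under the bracket (with the convention $M_k(V)=0$ for $k\leq-3$, forced by $L_k=0$ there, so that $[M_i(V),M_j(V)]\subset M_{i+j}(V)\subset M(V)$ always). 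Each $M_i(V)$ is $\mathbb{Z}_2$-graded because its defining condition is parity preserving, so $M(V)$ is a genuine graded subalgebra. Thus the whole task is the inclusion $[M_i(V),M_j(V)]\subset M_{i+j}(V)$ for all $i,j\geq-2$.

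First I would record the elementary fact $(\ast)$: $[V,M_s(V)]\subset M_{s-1}(V)$ for every $s\geq-1$. For $s\geq0$ this is literally the defining property of $M_s(V)$ (the supercommutativity sign being harmless for a subspace inclusion), and for $s=-1$ it is just $[V,V]=M_{-2}(V)$. This single fact settles every case of (3) in which one index is negative; by the antisymmetry of the bracket we may put that index first. If $i=-1$, then $[M_{-1}(V),M_j(V)]=[V,M_j(V)]\subset M_{j-1}(V)$ by $(\ast)$. If $i=-2$, writing a generator of $M_{-2}(V)=[V,V]$ as $[a,b]$ with $a,b\in V$ and applying the super Jacobi identity $[[a,b],c]=[a,[b,c]]-(-1)^{|a||b|}[b,[a,c]]$ to $c\in M_j(V)$, each term lands in $[V,[V,M_j(V)]]\subset[V,M_{j-1}(V)]\subset M_{j-2}(V)$ by two applications of $(\ast)$. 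Whenever $i+j\leq-3$ the bracket vanishes for degree reasons, so all out-of-range terms are disposed of automatically.

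The remaining and only genuinely inductive case is $i,j\geq0$, which I would prove by induction on $N=i+j\geq0$. Given $u\in M_i(V)$ and $w\in M_j(V)$, the element $[u,w]$ already lies in $L_{i+j}$, so by the definition of $M_{i+j}(V)$ it suffices to show $[[u,w],v]\in M_{i+j-1}(V)$ for every $v\in V$. Super Jacobi gives $[[u,w],v]=[u,[w,v]]-(-1)^{|u||w|}[w,[u,v]]$, where $[w,v]\in M_{j-1}(V)$ and $[u,v]\in M_{i-1}(V)$ by $(\ast)$; hence the two terms lie in $[M_i(V),M_{j-1}(V)]$ and $[M_j(V),M_{i-1}(V)]$, each of total degree $N-1$. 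Each such bracket is already known: it falls under the negative-index analysis of the previous paragraph when the shifted index is $-1$ (i.e.\ when $j=0$, resp.\ $i=0$), and under the induction hypothesis otherwise. Both terms therefore lie in $M_{N-1}(V)$, so $[u,w]\in M_N(V)$. The base case $N=0$ is exactly this computation with $(\ast)$ standing in for the induction hypothesis.

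I expect no conceptual obstacle; the only point requiring care is the degree bookkeeping in the inductive step, namely verifying that the two brackets produced by Jacobi truly have total degree $N-1<N$ and that the spaces $M_{i-1}(V),M_{j-1}(V)$ to which $(\ast)$ is applied have index $\geq-1$ (which holds since $i,j\geq0$), so that $(\ast)$ and the induction hypothesis genuinely apply. The supercommutativity signs play no role in the subspace inclusions, and the conventions $L_k=0$ and $M_k(V)=0$ for $k\leq-3$ make every degenerate case trivial, so the argument is a routine but careful verification.
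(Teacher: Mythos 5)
Your proof is correct, and it is exactly the routine verification the paper omits (the lemma is stated with ``It is easy to verify'' and no proof given). The reduction of everything to part (3), the base fact $[V,M_s(V)]\subset M_{s-1}(V)$, and the induction on $i+j$ via the super Jacobi identity are the standard argument, and your handling of the negative-index and degenerate cases is complete.
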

Suppose $L=KO(n)$ or $SKO(n,\lambda)$, and $V$ is an isotropic subspace of $L_{-1}$.
Set
\begin{align*}
&M_{-2}(L_{-2},V)=L_{-2},      \quad M_{-1}(L_{-2},V)=V, \\
&M_{i}(L_{-2},V)=\{u\in L_i\mid [u,V]\subset M_{i-1}(L_{-2},V), [u,L_{-2}]\subset M_{i-2}(L_{-2},V)\}.
\end{align*}
The following lemma can be verified directly:
\begin{lemma}\label{th:3.10}
Let $V$ be an isotropic subspace of $L_{-1}$.
Then
\begin{enumerate}
\item[(1)] $M_0(L_{-2},V)$ is a subalgebra of $L$.
\item[(2)] $M_i(L_{-2},V)$ is an $M_0(L_{-2},V)$-module, for $i\geqslant -2$.
\item[(3)] $[M_i(L_{-2},V),M_j(L_{-2},V)]\subset M_{i+j}(L_{-2},V)$, for $i,\ j\geqslant -2$.
\item[(4)] $M(L_{-2},V)=\oplus_{i\geq {-2}}M_i(L_{-2},V)$ is a subalgebra of $L$.
\end{enumerate}
\end{lemma}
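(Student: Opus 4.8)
The plan is to prove statement (3) first, since (1), (2) and (4) are immediate formal consequences: taking $i=j=0$ in (3) gives $[M_0,M_0]\subset M_0$, which is (1); taking $i=0$ and arbitrary $j$ gives $[M_0,M_j]\subset M_j$, which is (2); and (3) for all $i,j\geqslant-2$ says exactly that $M(L_{-2},V)=\oplus_{i\geqslant-2}M_i(L_{-2},V)$ is closed under the bracket, i.e.\ (4). Throughout I abbreviate $M_i=M_i(L_{-2},V)$, and I adopt the convention $M_k=L_k=0$ for $k<-2$, which is legitimate because the standard gradation of $L=KO(n)$ or $SKO(n,\lambda)$ starts in degree $-2$.

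To prove (3) I would induct on $s=i+j$, establishing $[M_i,M_j]\subset M_{i+j}$ for every pair with $i,j\geqslant-2$. The two generating degrees $-1$ and $-2$ are controlled directly by the definition: for $i\geqslant0$ one has $[M_i,V]\subset M_{i-1}$ and $[M_i,L_{-2}]\subset M_{i-2}$ by the very defining conditions of $M_i(L_{-2},V)$. The cases in which $\min(i,j)<0$ are settled at once by the grading: $[M_{-2},M_{-2}]=[L_{-2},L_{-2}]\subset L_{-4}=0$, $[M_{-1},M_{-2}]=[V,L_{-2}]\subset L_{-3}=0$, $[M_{-1},M_{-1}]=[V,V]\subset L_{-2}=M_{-2}$, while $[M_i,M_{-1}]\subset M_{i-1}$ and $[M_i,M_{-2}]\subset M_{i-2}$ for $i\geqslant0$ are the defining conditions again. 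These require no induction and serve as the base of the recursion.

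The heart of the argument is the case $i,j\geqslant0$. Fix homogeneous $u\in M_i$ and $w\in M_j$; then $[u,w]\in L_{i+j}$ by the grading, and since $i+j\geqslant0$ it suffices to verify the two defining conditions of $M_{i+j}$, namely $[[u,w],V]\subset M_{i+j-1}$ and $[[u,w],L_{-2}]\subset M_{i+j-2}$. For the first, I apply the super Jacobi identity to write $[[u,w],v]$, for $v\in V$, as a sum, up to sign, of $[[u,v],w]$ and $[u,[w,v]]$; by the defining conditions $[u,v]\in M_{i-1}$ and $[w,v]\in M_{j-1}$, so the two terms lie in $[M_{i-1},M_j]$ and $[M_i,M_{j-1}]$, each a bracket of total index-sum $s-1$, hence contained in $M_{i+j-1}$ by the induction hypothesis. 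The second condition is identical with $L_{-2}$ in place of $V$: for $w_0\in L_{-2}$ one has $[u,w_0]\in M_{i-2}$ and $[w,w_0]\in M_{j-2}$, so $[[u,w],w_0]$ reduces to brackets of index-sum $s-2$ that land in $M_{i+j-2}$ by induction. The super signs produced by the Jacobi identity are irrelevant, since only containments of $\mathbb{Z}_2$-graded subspaces are asserted. This shows $[u,w]\in M_{i+j}$ and closes the induction.

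The only point demanding care is the bookkeeping of the induction: one must check that every bracket produced by the Jacobi reduction has strictly smaller index-sum than $s$ (which it does, being $s-1$ or $s-2$) and that each such smaller-sum pair is either covered by the direct grading cases or falls under the induction hypothesis, the termination being guaranteed by $M_k=0$ for $k<-2$. (Note that isotropy of $V$ is not actually needed for these closure properties, since $[V,V]\subset L_{-2}=M_{-2}$ holds in any case; it is simply the standing hypothesis under which this construction is used.) Once (3) is established, (1), (2) and (4) follow formally as indicated above.
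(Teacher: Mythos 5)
Your proof is correct; the paper offers no argument for this lemma (it is stated as something that ``can be verified directly''), and your induction on $i+j$ via the super Jacobi identity, with the degrees $-1$ and $-2$ handled by the defining conditions and the grading, is exactly the direct verification intended. Your side remark that isotropy of $V$ is not needed for closure (only $[V,V]\subset L_{-2}=M_{-2}$ is used) is also accurate.
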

Recall that $\mathrm{misdim}V$ is the dimension of maximal isotropic subspaces of a symplectic superspace $V$.
For $L=HO(n)$, $SHO(n)$, $KO(n)$ or $SKO(n,\lambda)$,
we put
\begin{align}
\mathcal{V}^L&=\{V\mid \mbox{$V$ is a nontrivial subspace of $L_{-1}$ with $\mathrm{misdim}V\neq n-1$}\};\nonumber\\
\mathfrak{V}_{\mathfrak{i}}^{L}&=\{V\in \mathcal{V}^L\mid \mbox{$V$ is an isotropic subspace of $L_{-1}$}\};\nonumber\\
\mathfrak{V}_{\mathfrak{n}}^{L}&=\{V\in \mathcal{V}^L\mid \mbox{$V$ is a nondegenerate subspace of $L_{-1}$}\}.\nonumber
\end{align}
For subspaces $V$ and $\widetilde{V}$ of $L_{-1}$,
$V\stackrel{\mathrm{sspace }}{\cong} \widetilde{V}$ means that
$V$ is sym-isotropic to $\widetilde{V}$ (Definition \ref{A6}).
For integers $k,l>0$ and $k \leq l$,
$$
\mathbf{J}(k,l)=\{(i_1,\ldots,i_k)\mid 1\leq i_1<\cdots< i_k\leq l\}.
$$
We have the following theorem:
\begin{theorem}\label{03.2}
All the  MGS of type (\MyRoman{2}) for Lie superalgebras of odd Cartan type are characterized as follows:

(1) For $L=HO(n)$ or $SHO(n)$,  we have:
 \begin{enumerate}
 \item[(a)\label{a}]All the  MGS of type (\MyRoman{2}) for $L$ are precisely
$$
\{M(V)\mid V\in \mathcal{V}^L\}.
$$
\item[(b)]
 For any $V$ and $\widetilde{V}$ in $\mathcal{V}^L$,
$$
M(V)\stackrel{\mathrm{alg}}{\cong} M(\widetilde{V}) \Longleftrightarrow V\stackrel{\mathrm{sspace }}{\cong} \widetilde{V}.
$$
\item[(c)]
$L$ has exactly $(n^3+3n^2+8n-6)/6$ pairwise  non-isomorphic type $(\mathrm{II})$ MGS.
\item[(d)]For $V\in \mathcal{V}^L$,
we have the following dimension formulas:
\begin{align}
\dim M^{HO}(V)=&2^np^n+2^kp^k-2^{s-t}p^{k+t}(2n-2s+1)-1;\nonumber\\
 \dim M^{SHO}(V)
=&\sum^{n}_{l=2}\bigg((2^{n-1}-2^{n-l})\sum_{(i_1,\ldots,i_l)\in\mathbf{J}(l,n)}\prod_{c=1}^l\pi_{i_c}\bigg)+(p+1)^n-2^n-2\nonumber\\
&-\Bigg(\bigg((p+2)^k+\sum^{k}_{m=2}\big((2^{k-1}-2^{k-m})\sum_{(j_1,\ldots,j_m)\in\mathbf{J}(m,k)}\prod_{c=1}^m\pi_{j_c}\big)\bigg)\cdot\nonumber\\
&\bigg(p^t2^{s-k-t}(2n-2s-1)-s+k\bigg)\Bigg)-\delta_{k,n-1}\delta_{s-t,n}2^{n-1}.
& \qedhere \nonumber
\end{align}
where $k=\mathrm{R}(V)/2$,
$s=\mathrm{misdim}V$ and
$t=\dim V_{\bar 0}-k$.
\end{enumerate}

(2) For $L=KO(n)$ or $SKO(n,\lambda)$, we have:
 \begin{enumerate}
 \item[(a)]
All the  MGS of type (\MyRoman{2}) for $L$ are precisely
$$
 \{M(V)\mid  V \in \mathcal{V}^L\}\cup \{M(L_{-2}, V)\mid V\in\mathfrak{V}_{\mathfrak{i}}^L\}.
$$
\item[(b)]
 For any $V$ and $\widetilde{V}$ in $\mathcal{V}^L$,
$$
M(V)\stackrel{\mathrm{alg}}{\cong} M(\widetilde{V}) \Longleftrightarrow V\stackrel{\mathrm{sspace }}{\cong} \widetilde{V}.
$$
For any  $V$ and $\widetilde{V}$ in $\mathfrak{V}_{\mathfrak{i}}^L$,
$$
 M(L_{-2}, V)\stackrel{\mathrm{alg}}{\cong} M(L_{-2}, \widetilde{V}) \Longleftrightarrow V\stackrel{\mathrm{sspace }}{\cong} \widetilde{V}.
$$
For $V\in \mathcal{V}^L$ and $\widetilde{V}\in \mathfrak{V}_{\mathfrak{i}}^L$,
$$
M(V)\ncong M(L_{-2}, \widetilde{V}).
$$
\item[(c)]
$L$ has exactly $(n^3+3n+20n-12)/6$ pairwise non-isomorphic type $(\mathrm{II})$ MGS.
\item[(d)] For $V\in \mathcal{V}^L$,
\begin{enumerate}
\item if $V$ is nonisotropic, then
\begin{align}
\dim M^{KO}(V)
=&2^{n+1}p^n+2^{k+1}p^k-2^{s-t+1}p^{k+t}(2n-2s+1);\nonumber
\end{align}
where $k=\mathrm{R}(V)/2$, $s=\mathrm{misdim}V$
and
$t=\dim V_{\bar 0}-k$.
\item if $V$ is isotropic, then
\begin{align}
\dim M^{KO}(V)=&2^{n+1}p^n-2^{s-t}p^{t}(2n-2s+1);\nonumber\\
\dim M(KO_{-2},V)
=&2^{n+1}p^n-p^{t}2^{s-t+1}(2n-2s+1)+2,\nonumber
\end{align}
where
$s=\dim V$ and
$t=\dim V_{\bar 0}$.\qed
\end{enumerate}
\end{enumerate}
\end{theorem}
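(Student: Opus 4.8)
The plan is to handle the two constructions $M(V)$ and $M(L_{-2},V)$ together, establishing the characterization (a) first, then the isomorphism criterion (b), and finally reading off the count (c) and the dimensions (d) from the standard form of $V$ supplied by Lemma~\ref{th:1.3}. Throughout I would fix, for each $V$, the explicit homogeneous basis indexed by $(k,t,s)$ with $k=\mathrm{R}(V)/2$, $t=\dim V_{\bar 0}-k$, $s=\mathrm{misdim}V$, and exploit that every subspace of $L_{-1}$ can be moved to such a representative by a $\mathbb{Z}$-homogeneous automorphism.

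For (a) I would first record (from the subalgebra lemmas preceding the theorem together with Lemma~\ref{th:3.10}) that $M(V)$ and $M(L_{-2},V)$ are graded subalgebras whose $(-1)$-component is $V$, hence proper of type (II). The containment half is then a downward induction: if $M$ is any type (II) MGS with $M_{-1}=V$, then $M_{-2}=[V,V]=0$ when $L_{-2}=0$ (the $HO,SHO$ case) or $M_{-2}\in\{0,L_{-2}\}$ (the $KO,SKO$ case), and since $[M_i,M_{-1}]\subseteq M_{i-1}$ one gets $M_i\subseteq M_i(V)$ (resp. $M_i(L_{-2},V)$) by induction on $i$. Thus $M$ lies inside $M(V)$ or $M(L_{-2},V)$, and maximality forces equality; for nonisotropic $V$ one has $[V,V]=L_{-2}$, so the two constructions coincide, which is exactly why the second family is indexed only by isotropic $V\in\mathfrak{V}^L_{\mathfrak{i}}$.

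The substantive step is the converse, that $M(V)$ (resp. $M(L_{-2},V)$) is \emph{itself} maximal precisely when $\mathrm{misdim}V\neq n-1$. Using that $M_0(V)$ is exactly the stabilizer of $V$ in $L_0$ and that $[N_i,N_{-1}]\subseteq N_{i-1}$, a minimal-degree argument shows that any graded subalgebra $N\supsetneq M(V)$ must already satisfy $N_{-1}\supsetneq V$; so maximality reduces to the single assertion that \emph{if $N$ is graded with $N_{-1}$ strictly containing $V$, then $N=L$}. Here I would use the irreducibility of $L_{-1}$ as an $L_0$-module, the generation statements $L=\langle L_{-1}+L_0+L_1\rangle$ of Lemmas~\ref{1.1}(3),~\ref{1.2}(3),~\ref{th:1.2}(3) (and the $SKO$ analogue), and the weight-space data of Lemma~\ref{2.2} to propagate an extra $(-1)$-vector upward, recover all of $L_1$, and then all of $L$ by transitivity. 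The exceptional value $\mathrm{misdim}V=n-1$ is exactly the stratum where this propagation stalls, so that a proper graded subalgebra can be interposed between $M(V)$ and $L$; excising it is the content of the definition of $\mathcal{V}^L$, and verifying this borderline failure is the main obstacle of the whole proof.

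For (b), the implication $V\stackrel{\mathrm{sspace}}{\cong}\widetilde V\Rightarrow M(V)\stackrel{\mathrm{alg}}{\cong}M(\widetilde V)$ follows by transporting $V$ to $\widetilde V$ with the automorphism of Lemma~\ref{th:1.3} and checking it carries each $M_i(V)$ onto $M_i(\widetilde V)$; for the converse, any superalgebra isomorphism is $\mathbb{Z}$-graded by our standing conventions, hence restricts to a linear isomorphism $V=M_{-1}(V)\to M_{-1}(\widetilde V)=\widetilde V$ intertwining the bracket into $L_{-2}$, i.e. respecting the symplectic form up to the induced scalar on $L_{-2}$, which is sym-isotropy in the sense of Definition~\ref{A6}. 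The separation $M(V)\ncong M(L_{-2},\widetilde V)$ is read off from the $(-2)$-components. Part (c) then enumerates the sym-isotropy classes in $\mathcal{V}^L$, i.e. the admissible triples $(k,t,s)$ with $s\neq n-1$, doubled over the isotropic classes in the $KO,SKO$ case to account for $M(L_{-2},V)$, and the resulting binomial sums collapse to the stated cubic polynomials in $n$. Finally (d) is a direct degree-by-degree count of $\dim M_i(V)$ from the standard basis and Lemma~\ref{2.2}, the correction factor $\delta_{k,n-1}\delta_{s-t,n}$ recording the single degenerate configuration in which an extra weight space survives. I expect the borderline maximality analysis in (a) to be the crux, with (c) and (d) reducing to lengthy but mechanical bookkeeping.
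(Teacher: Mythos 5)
Your overall architecture coincides with the paper's: the containment $M\subseteq M(M_{-1})$ (or $M(L_{-2},M_{-1})$) by downward induction on the degree, the reduction of maximality to adjoining a single vector of $L_{-1}\setminus V$ via the standard torus and Lemma \ref{th3.2}, and the weight-space propagation for $\mathrm{misdim}V\neq n-1$ are all exactly what the paper does. There is, however, a genuine gap in your argument for the converse of (b) when $L=HO(n)$ or $SHO(n)$. You propose to deduce $V\stackrel{\mathrm{sspace }}{\cong}\widetilde V$ from a graded isomorphism $\Phi\colon M(V)\to M(\widetilde V)$ by observing that $\Phi|_{V}$ intertwines ``the bracket into $L_{-2}$,'' hence respects the symplectic form up to a scalar. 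But for $HO$ and $SHO$ one has $L_{-2}=0$ and $[L_{-1},L_{-1}]=0$: the odd form $(x_i+\mathbb{F},x_j+\mathbb{F})\mapsto D_{HO}(x_i)(x_j)$ takes values in $\mathbb{F}\cdot 1$, which is killed in the quotient $\bar{\mathcal O}(n,n)$, so the form is \emph{not} recoverable from the bracket restricted to degree $-1$, and an abstract graded algebra isomorphism gives no control over it. The paper circumvents this (proof of Lemma \ref{th:3.4}) by a dimension count: from $\mathrm{sdim}V=\mathrm{sdim}\widetilde V$ and $\dim M_0(V)=\dim M_0(\widetilde V)$, combined with the explicit description $M_0(V)=V_1^2\oplus V_2L_{-1}\oplus V_3^2$ of Lemma \ref{4.4}, one gets $ks=k_1s_1$ and $k+s=k_1+s_1$, whence $k=k_1$, $s=s_1$, $t=t_1$, i.e. $\mathrm{ssdim}V=\mathrm{ssdim}\widetilde V$. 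Your bracket argument does work for $KO$ and $SKO$, where $[x_i,x_j]=(x_i,x_j)\cdot 1\in L_{-2}$, but you need a replacement for the odd Hamiltonian cases.

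A second, smaller, defect: you correctly identify $\mathrm{misdim}V=n-1$ as the locus where maximality fails, but you leave the verification as ``the main obstacle'' without supplying it. The paper's witness is concrete: for standard $V$ with $s=n-1$ one has $V_3=\mathrm{span}_{\mathbb{F}}\{x_n, x_{n'}\}$, and for a suitable $i\in\mathbf{I}_{v_1}\cup\mathbf{I}_{\overline{v}_2}$ the subalgebra $\langle M(V),x_ix_n\rangle$ strictly contains $M(V)$ yet does not contain $x_ix_{n'}$, so $M(V)$ is not maximal. Without this construction, and without the complementary check that for $s\neq n-1$ the propagation through the weight spaces (e.g. $L^{\epsilon_{i'}}=[L^{\epsilon_t},L^{\epsilon_{t'}+\epsilon_{i'}}]$, which requires $n-s>1$) really does recover all of $L_{-1}$ and then all of $L$, part (a) remains incomplete.
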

We will prove Theorem \ref{03.2} (1) and (2) in the following subsections, respectively.
\subsection{$HO(n)$ and $SHO(n)$}
Suppose $L=HO(n)$ or $SHO(n)$.
We will determine all the MGS of type (\MyRoman{2}) for $L$.
Recall that $L_{-1}=\mathrm{span}_{\mathbb{F}}\{x_i\in L\mid i\in \overline{1,2n}\}$.
We call a subspace $V$ of $L_{-1}$ is standard if $V$ is spanned by certain standard basis elements
\begin{equation*}
\{x_i\in L_{-1}\mid i\in\overline{1,k} \cup \overline{1',k'} \cup
 \overline{k+1,k+t}\cup \overline{(k+t+1)',s'}\},
\end{equation*}
where $1\leq k\leq s \leq n$ and $0\leq t\leq s-k$.
Let $V$ be a standard subspace of $L_{-1}$ with $k$, $s$ and $t$ as above mentioned.
Obviously,  $k=\mathrm{R}(V)/2$,
$s=\mathrm{misdim}V$ and
$t=\dim V_{\bar 0}-k$.
We set subspaces $V_1$, $V_2$, $\overline{V}_2$ and $V_3$ associated with $V$ as follows:
\begin{equation}\label{4.2}
\begin{split}
V_1&=\mathrm{span}_{\mathbb{F}}\left\{x_i\in L_{-1}\mid i\in \mathbf{I}_{v_1}\right\}, \\
V_2&=\mathrm{span}_{\mathbb{F}}\left\{x_i\in L_{-1}\mid i\in \mathbf{I}_{v_2}\right\},\\
\overline{V}_2&=\mathrm{span}_{\mathbb{F}}\left\{x_i\in L_{-1}\mid i\in\mathbf{I}_{\overline v_2}\right\}, \\
V_3&=\mathrm{span}_{\mathbb{F}}\left\{x_i\in L_{-1}\mid i\in\mathbf{I}_{v_3}\right\},
\end{split}
\end{equation}
where
\begin{align}
\mathbf{I}_{v_1}&=\overline{1,k} \cup \overline{1',k'},\nonumber\\
\mathbf{I}_{v_2}&=\overline{k+1,k+t}\cup \overline{(k+t+1)',s'},\nonumber\\
\mathbf{I}_{\overline{v}_2}&=\overline{(k+1)',(k+t)'}\cup \overline{k+t+1,s},\nonumber\\
\mathbf{I}_{v_3}&=\overline{s+1,n} \cup \overline{(s+1)',n'}\nonumber
\end{align}
in which $
\mathrm{span}_{\mathbb{F}}\left\{x_i\mid i\in \emptyset\right\}=0
$.
For $l>0$, suppose $V_1, V_2,\ldots, V_l$ are standard subspaces of $L_{-1}$.
Set
$$\prod_{i=1}^l{V_{i}}=\mathrm{span}_{\mathbb{F}}\{x_{j_1}x_{j_2}\cdots x_{j_l}\in L_{l-2}\mid x_{j_i}\in V_{i}, i\in\overline{1,l}\}.$$
If $V_1=V_2=\ldots=V_l=V$, then we write $\prod_{i=1}^lV_i=V^l$, $V^0=\mathbb{F}$ and $V^{-l}=0$ for $l>0$.
Write  $V^0V^l=V^l$ for $l\in \mathbb{Z}$.

Suppose $V$ is a standard subspace of $HO_{-1}$.  Let $V_1,V_2,\overline{V}_2$ and $V_3$ be subspaces of $HO_{-1}$
associated with $V$ which are defined as in \eqref{4.2}.
Then we have the following lemma by induction:
\begin{lemma}\label{4.4}
Suppose $L=HO(n)$. Then we have:
\begin{enumerate}
\item[(1)]$M_0(V)=V_1^2\oplus V_2L_{-1}\oplus V_3^2.$
\item[(2)]$M_i(V)=V_1^{i+2}\oplus \left(\bigoplus_{j>0,j+k=i+2}V_2^jL_{-1}^k\right)\oplus \left(\bigoplus_{l>1,l+m=i+2} V_3^l\overline{V}_2^m\right)$, for $i>0$.
\item[(3)]$M(V)=\left(\bigoplus_{i>0}V_1^{i}\right)\oplus \left(\bigoplus_{j>0,k\geq0}V_2^jL_{-1}^k\right)\oplus \left(\bigoplus_{l>1,m\geq0} V_3^l\overline{V}_2^m\right)$.
\end{enumerate}
Next we still suppose $L=HO(n)$ or $SHO(n)$.
\begin{lemma}\label{th:3.4}
Suppose $V$ and $\widetilde{V}$ are nontrivial subspaces of $L_{-1}$.
Then
$$
M(V)\stackrel{\mathrm{alg}}{\cong} M(\widetilde{V}) \Longleftrightarrow V\stackrel{\mathrm{sspace }}{\cong} \widetilde{V}.
$$
\end{lemma}
\begin{proof}
Obviously, $M(V)\stackrel{\mathrm{alg}}{\cong} M(\widetilde{V})$,
if $V\stackrel{\mathrm{sspace }}{\cong} \widetilde{V}$ by Lemma \ref{th:1.3}.
Conversely, we may suppose $V$ and $\widetilde{V}$ are standard by Lemma \ref{th:1.3}. If there is a $\mathbb{Z}$-gradation algebraic isomorphism $\Phi: M(V)\longrightarrow M(\widetilde{V})$,
then $\Phi(V)=\widetilde V$ and $\Phi(M_0(V))=M_0(\Phi(V))=M_0(\widetilde{V})$.
Suppose  $\mathrm{R}(V)=2k$, $\mathrm{R}(\widetilde{V})=2k_1$,
$\mathrm{misdim}V=s$, $\mathrm{misdim}(\widetilde{V})=s_1$, $\dim V_{\bar 0}-k=t$
and $\dim \widetilde{V}_{\bar 0}-k_1=t_1$.
Since $(\dim V_{\bar 0}, \dim V_{\bar 1})= (\dim \widetilde {V}_{\bar 0}, \dim \widetilde{V}_{\bar 1})$
and
$$\dim M_0(V)-\dim M_0(\widetilde{V})=0,$$
we have
$ks=k_1s_1$, by Lemma \ref{4.4}.
The equation $k_1+s_1=k+s$ forces $k=k_1, \ s=s_1$ and $t=t_1$.
Thus $V\stackrel{\mathrm{sspace }}{\cong} \widetilde{V}$.
\end{proof}
\end{lemma}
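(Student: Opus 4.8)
The plan is to prove the two implications separately, in each case exploiting Lemma \ref{th:1.3} to pass between a subspace and a ``standard'' model and Lemma \ref{4.4} to read off the graded dimensions of $M(V)$. For the easy implication ($\Leftarrow$), I would start from a sym-isotropy $V \stackrel{\mathrm{sspace}}{\cong} \widetilde V$, that is, a parity-preserving isometry of the symplectic superspace $L_{-1}$ carrying $V$ onto $\widetilde V$. Feeding the corresponding change-of-basis matrix into Lemma \ref{th:1.3} yields a $\mathbb{Z}$-homogeneous automorphism $\Phi \in \operatorname{Aut}(L)$ with $\Phi(V)=\widetilde V$. Since each homogeneous piece $M_i(V)$ is defined purely by bracket conditions against $V$, and $\Phi$ preserves both the bracket and the grading, an induction on $i$ gives $\Phi(M_i(V))=M_i(\widetilde V)$ for every $i$, whence $M(V)\stackrel{\mathrm{alg}}{\cong} M(\widetilde V)$.

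For the substantive implication ($\Rightarrow$), by Lemma \ref{th:1.3} I may assume $V$ and $\widetilde V$ are standard, with invariants $(k,s,t)$ and $(k_1,s_1,t_1)$, where $k=\mathrm{R}(\cdot)/2$, $s=\mathrm{misdim}$ and $t=\dim(\cdot)_{\bar 0}-k$; note $k\le s$ and likewise $k_1\le s_1$. A graded algebra isomorphism $\Phi:M(V)\to M(\widetilde V)$ must carry the lowest piece $M_{-1}(V)=V$ onto $\widetilde V$ and $M_0(V)$ onto $M_0(\widetilde V)$. Because $\Phi$ is $\mathbb{Z}_2$-homogeneous, matching even and odd dimensions of $V$ and $\widetilde V$ gives $k+t=k_1+t_1$ and $s-t=s_1-t_1$, and adding yields $k+s=k_1+s_1$. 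I would then compute $\dim M_0(V)$ from the decomposition $M_0(V)=V_1^2\oplus V_2L_{-1}\oplus V_3^2$ of Lemma \ref{4.4}(1) as an explicit function of $(k,s,t)$; after cancelling the contributions already pinned down by the parity dimensions, the equality $\dim M_0(V)=\dim M_0(\widetilde V)$ should collapse to $ks=k_1s_1$. The two relations $k+s=k_1+s_1$ and $ks=k_1s_1$ force $\{k,s\}=\{k_1,s_1\}$ as multisets, and the normalization $k\le s$, $k_1\le s_1$ then gives $k=k_1$, $s=s_1$, and finally $t=t_1$. Equal invariants mean exactly $V\stackrel{\mathrm{sspace}}{\cong}\widetilde V$.

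The main obstacle I anticipate is the bookkeeping in the computation of $\dim M_0(V)$: the summands $V_1^2$, $V_2L_{-1}$ and $V_3^2$ are spanned by degree-two products whose dimensions are governed by the super(anti)symmetry of $x_ix_j$ in the divided-power-times-exterior algebra, so one must count these carefully and verify that the $(k,s,t)$-dependence factors so that precisely the ``cross term'' encoding $ks$ survives the cancellation. A secondary subtlety is the case $L=SHO(n)$, where $M(V)$ sits inside $SHO$ rather than $HO$: here I would need to confirm that intersecting with $SHO_0$ leaves the extracted relation $ks=k_1s_1$ intact, so that the same multiset argument applies.
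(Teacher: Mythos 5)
Your proposal is correct and follows essentially the same route as the paper: reduce to standard subspaces via Lemma \ref{th:1.3}, extract $k+s=k_1+s_1$ from the superdimension of the degree $-1$ piece and $ks=k_1s_1$ from $\dim M_0$ via Lemma \ref{4.4}(1), then solve for the invariants. The subtleties you flag (the explicit count for $\dim M_0(V)$ and the passage to $SHO$, where $M_0^{SHO}(V)\oplus\mathbb{F}x_1x_{1'}=M_0^{HO}(V)$ shifts the dimension only by a constant) are real but routine, and the paper treats them at the same level of brevity.
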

Recall $\mathcal{V}^L$ and $\mathfrak{V}_{\mathfrak{n}}^L$ defined before Theorem \ref{03.2}.
We have the following lemma:
\begin{lemma}\label{th:3.3}
Let $V$ be a proper subspace of $L_{-1}$. Then
 $M(V)$ is an MGS of $L$ if and only if $V\in \mathcal{V}^L$.
\end{lemma}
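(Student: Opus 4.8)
The plan is to reduce the whole statement to a module-theoretic question about how far the $(-1)$-component $V$ can be enlarged inside a graded overalgebra. First I would invoke Lemma \ref{th:1.3} to assume $V$ is standard, with parameters $k=\mathrm{R}(V)/2$, $s=\mathrm{misdim}V$, $t=\dim V_{\bar 0}-k$ and associated decomposition $L_{-1}=V_1\oplus V_2\oplus\overline{V}_2\oplus V_3$ as in \eqref{4.2}; this is legitimate because a $\mathbb{Z}$-homogeneous automorphism carries an MGS to an MGS and preserves $\mathrm{misdim}$. The next ingredient is a prolongation principle: for any subspace $U\subseteq L_{-1}$, a short induction on degree shows that every $\mathbb{Z}$-graded subalgebra $N$ with $N_{-1}=U$ satisfies $N\subseteq M(U)$, since $N_{-1}=U=M_{-1}(U)$ and $[N_i,U]\subseteq N_{i-1}\subseteq M_{i-1}(U)$ force $N_i\subseteq M_i(U)$. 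Hence $M(V)$ is the largest graded subalgebra with $(-1)$-component $V$, and any graded $N$ with $M(V)\subsetneq N$ must have $N_{-1}\supsetneq V$. Because $N_0\supseteq M_0(V)$ and $[N_0,N_{-1}]\subseteq N_{-1}$, the space $N_{-1}$ is an $M_0(V)$-submodule of $L_{-1}$ containing $V$ properly, so the lemma reduces to classifying such submodules and testing the containment $M(V)\subseteq M(W)$ for $W=N_{-1}$.

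The heart of the argument, and the step I expect to be the main obstacle, is this classification, carried out via the explicit action read off from Lemma \ref{4.4}(1), namely $M_0(V)=V_1^2\oplus V_2L_{-1}\oplus V_3^2$. A direct bracket computation gives two facts. First, $V_3$ is irreducible as a $V_3^2$-module exactly when $\dim V_3=2(n-s)\geq4$, i.e. when $s\leq n-2$: the operators sending the even part of $V_3$ into the odd part arise from products $x_{i'}x_{j'}$ of two distinct odd variables, which exist precisely when $V_3$ contains at least two odd variables; when $s=n-1$ one has $V_3=\mathrm{span}_{\mathbb{F}}\{x_n,x_{n'}\}$ with $x_{n'}^2=0$, so $\mathrm{span}_{\mathbb{F}}\{x_n\}$ becomes a proper $V_3^2$-submodule. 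Second, the $M_0(V)$-submodule generated by $V$ together with any vector having nonzero $\overline{V}_2$-component is all of $L_{-1}$, since the operators $x_{c'}x_b\in V_2L_{-1}$ sweep out $\overline{V}_2$ and then $V_3$. Consequently, for $s\leq n-2$ the unique proper $M_0(V)$-submodule strictly between $V$ and $L_{-1}$ is $V\oplus V_3$ (and when $V_2=0$ one has $V\oplus V_3=L_{-1}$, so even that does not arise), whereas for $s=n-1$ there is in addition the submodule $W'=V\oplus\mathrm{span}_{\mathbb{F}}\{x_n\}$.

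For the implication $V\in\mathcal{V}^L\Rightarrow M(V)$ maximal I would take $N$ with $M(V)\subsetneq N$ and show $N=L$. By the classification, $N_{-1}$ is either $V\oplus V_3$ or $L_{-1}$. The case $N_{-1}=V\oplus V_3$ is excluded by a degree-one obstruction: writing $W=V\oplus V_3$, one checks from Lemma \ref{4.4}(2) that $V_3^2\overline{V}_2\subseteq M_1(V)$ but $V_3^2\overline{V}_2\not\subseteq M_1(W)$ (in $W$ the space $V_3$ lies in the nondegenerate part, not in the isotropic radical), so $M(V)\not\subseteq M(W)$; since prolongation forces $N\subseteq M(W)$, this contradicts $M(V)\subseteq N$. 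Hence $N_{-1}=L_{-1}$. Finally, with $N_{-1}=L_{-1}$ and $N\supseteq M(V)$ I would show $N_0=L_0$ and $N_1=L_1$: the former from the identity $M_0(V)+[L_{-1},M_1(V)]=L_0$, and the latter because $N_1\supseteq M_1(V)$ is a nonzero $L_0$-submodule which must be all of $L_1$ by Lemma \ref{th:2.1}(3) for $HO$ (respectively by irreducibility, Lemma \ref{3.6}, for $SHO$); here one exhibits a cubic in $M_1(V)$ with nonzero $\Delta$, e.g. $x^{(2\varepsilon_n)}x_{n'}$ when $V_3\neq0$, to see that $M_1(V)$ escapes the proper submodule $SHO_1'$. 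Then $N\supseteq L_{-1}+L_0+L_1$ and $L=\langle L_{-1}+L_0+L_1\rangle$ (Lemmas \ref{1.1}, \ref{1.2}) yield $N=L$, so $M(V)$ is maximal.

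For the converse I would prove the contrapositive: if $s=\mathrm{misdim}V=n-1$ then $M(V)$ is not maximal. Using the extra submodule $W'=V\oplus\mathrm{span}_{\mathbb{F}}\{x_n\}$, the point is that adjoining the single even vector $x_n$ to the isotropic radical of $V$ dissolves exactly the degree-one obstruction: now $x_n\in W'_2$, so $V_3^2\overline{V}_2$ and $V_3^3$ fall into $(W'_2)^2L_{-1}+W'_2L_{-1}^2+(W'_2)^3\subseteq M_1(W')$, and the analogous check in every degree gives $M(V)\subseteq M(W')$. Since $W'$ is a proper subspace of $L_{-1}$ (its symplectic complement still contains $x_{n'}$), $M(W')$ is a proper type $(\mathrm{II})$ subalgebra with $M(V)\subsetneq M(W')\subsetneq L$, so $M(V)$ is not an MGS. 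The same scheme treats $SHO(n)$, replacing Lemma \ref{4.4} and Lemma \ref{th:2.1} by the corresponding $SHO$-descriptions (Lemmas \ref{3.5}, \ref{3.6}) and imposing the divergence-free condition throughout. The genuinely delicate ingredients are the submodule classification and the degree-one containment test, which together pin the threshold precisely at $\mathrm{misdim}V=n-1$.
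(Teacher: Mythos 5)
Your argument is sound in outline but takes a genuinely different route from the paper's. The paper works ``from below'': it adjoins a single element $u$ to $M(V)$, uses the standard torus and Lemma \ref{th3.2} to reduce to $u=x_i\in L_{-1}\setminus V$, and then generates $L_{-1}$ (hence $L_0$ and $L_1$, hence $L$) by explicit weight-space brackets; for the necessity it exhibits $x_ix_n$ with $\langle M(V),x_ix_n\rangle$ proper. You work ``from above'': the prolongation principle $N\subseteq M(N_{-1})$ forces any graded $N\supsetneq M(V)$ to have $N_{-1}$ equal to one of the very few $M_0(V)$-submodules of $L_{-1}$ properly containing $V$, and you test each. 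Your classification of those submodules (reduction to spans of standard basis vectors via the torus, and reducibility of $V_3$ under $V_3^2$ exactly when $\dim V_3=2$) is correct, and it isolates the threshold $\mathrm{misdim}V=n-1$ more transparently than the paper does; your $W'=V\oplus\mathbb{F}x_n$ and the containment $M(V)\subseteq M(W')$ are precisely the proper overalgebra that the paper's necessity step implicitly invokes when it asserts $x_ix_{n'}\notin\langle M(V),x_ix_n\rangle$. The degree-one obstruction ruling out $N_{-1}=V\oplus V_3$ checks out.

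The one step you must not leave to citation is the identity $M_0(V)+[L_{-1},M_1(V)]=L_0$ in the case $N_{-1}=L_{-1}$. The identity is true, but it does \emph{not} follow from the formula for $M_1(V)$ printed in Lemma \ref{4.4}(2): for nondegenerate $V=V_1$ that formula reads $M_1(V)=V_1^3\oplus V_3^3$, whence $[L_{-1},M_1(V)]\subseteq V_1^2\oplus V_3^2=M_0(V)$ and the identity would fail, stalling your argument exactly on $\mathfrak{V}_{\mathfrak{n}}^L$. The printed formula omits cross terms: for $x_i\in V_1$ and $x_j\in V_3$ one has $[x_ix_jx_{j'},x_{i'}]=\pm x_jx_{j'}\in M_0(V)$ and $[x_ix_jx_{j'},x_l]=0$ for every other $x_l\in V$, so $x_ix_jx_{j'}\in M_1(V)$ although it lies in $V_1V_3^2$; bracketing it with $x_{j'}$ and $x_j$ then produces all of $V_1V_3$, which is exactly the missing piece of $L_0$. (The complement description $\overline{M}(V)$ used for the dimension count in Theorem \ref{03.2}(1)(d) is consistent with this corrected $M_1(V)$, not with Lemma \ref{4.4}(2) as printed.) So verify the identity by this direct computation. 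The only other soft spot is the sentence ``the analogous check in every degree gives $M(V)\subseteq M(W')$''; this does hold --- the complement of $M(W')$ consists of monomials in $V_1$ and $\overline{V}_2\oplus\mathbb{F}x_{n'}$ with at least one factor from the latter, and each such monomial already lies in $\overline{M}(V)$ because $x_{n'}$ can occur at most once --- but it deserves the same explicit treatment rather than a wave at ``every degree''.
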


\begin{proof}
Let $L=HO(n)$.
First we consider the sufficiency.
By Lemma \ref{th:3.4},
we may assume that $V$ is a standard subspace of $L_{-1}$.  Suppose $V_1,V_2,\overline{V}_2$ and $V_3$ are subspaces of $L_{-1}$
associated with $V$ which are defined as in \eqref{4.2}, and $\mathbf{I}_{v_1},\mathbf{I}_{v_2}, \mathbf{I}_{\overline{v}_2}$ and
$\mathbf{I}_{v_3}$ are corresponding index sets.
Obviously, $M(V)\subsetneq L$.
For any $u\in L\setminus M(V)$, set $$M=\langle {M(V), u \rangle}.$$
Notice that the standard torus $T$ is contained in $M(V)$. We can assume that
$
u=x_i\in L_{-1}\backslash V
$, by the definition of $M(V)$ and Lemma \ref{th3.2}.
We need to show that $M=L$.
If $V \in \mathfrak{V}_{\mathfrak{n}}^L$,
then $M_0(V)=V_1^2\oplus V_3^2$, by Lemma \ref{4.4}.
For $i'\neq j\in\mathbf{I}_{v_3}$, by Lemma \ref{2.2} we have
 $$L^{\epsilon_j}=[L^{\epsilon_i}, L^{\epsilon_{i'}+\epsilon_j}]\subset M.$$
Moreover, since $n-\mathrm{misdim}V>1$, for $i' \neq t\in\mathbf{I}_{v_3}$,
we have
$$L^{\epsilon_{i'}}=[L^{\epsilon_t}, L^{\epsilon_{t '}+\epsilon_{i'}}]\subset M.$$
Thus $L_{-1}\subset M.$
Since $M \supset M(M_{-1})=M(L_{-1})=L$, we have $M=L$.
For the remaining case that $V\in \mathcal{V}^L\backslash \mathfrak{V}_{\mathfrak{n}}^L$,
we still can show that $M(V)$ is an MGS of $L$ in the same way.

Next we consider the necessity.
Suppose
$V\in \mathcal{V}^L$.
We may assume that $V$ is a standard subspace of $L_{-1}$.
Suppose $V_1,V_2,\overline{V}_2$ and $V_3$ are subspaces of $L_{-1}$
associated with $V$ which are defined as in \eqref{4.2},  and $\mathbf{I}_{v_1},\mathbf{I}_{v_2}, \mathbf{I}_{\overline{v}_2}$ and
$\mathbf{I}_{v_3}$ are corresponding index sets.
Obviously $V_3=\{x_n\mid x_{n'}\}$ and $\mathbf{I}_{v_1}\cup \mathbf{I}_{v_2}\not=\emptyset$.
So there exists $ i\in \mathbf{I}_{v_1}\cup\mathbf{I}_{\bar v_2}$
such that $x_ix_n\not\in M(V)$, by Lemma \ref{4.4}.
We can see that  $M(V)\subsetneq \langle M(V), x_ix_n\rangle$ and  $x_ix_{n'}\not\in \langle M(V),x_ix_n\rangle$,
Consequence, $M(V)$ is not an MGS of $L$.

We can prove the lemma similarly for $L=SHO$, since $SHO_0\oplus x_1x_{1'}=HO_0$.
\end{proof}

\begin{proof}[\textit{Proof of Theorem \ref{03.2}(1)}]
Suppose $M$ is an MGS of type (\MyRoman{2}) for $L$.
It is known from induction that $M_i\subset M_i(M_{-1})$, for any $i\geq -1$.
Thus $M\subset M(M_{-1})$.
We have $M =M(M_{-1})$ by the maximality of $M$.
 Furthermore, (a), (b) and (c) of Theorem \ref{03.2}(1) hold by Lemmas \ref{th:3.4} and \ref{th:3.3}.
It remains to show the dimension formulas.
Let $L=HO(n)$. We may assume that $V$ is a standard subspace of $L_{-1}$.
Suppose $V_1,V_2,\overline{V}_2$ and $V_3$ are subspaces of $L_{-1}$
associated with $V$ which are defined as in \eqref{4.2}.
Put
$$
\overline{M}(V)=\Bigg(\bigoplus_{i\geq0,j>0}V_1^i\overline{V}_2^j\Bigg)\oplus \Bigg(\bigoplus_ {q\geq0,l\geq0}V_1^q\overline{V}_2^lV_3\Bigg).
$$
Then
$$\dim \overline {M}(V)=2^{s-t}p^{k+t}(2n-2s+1)-2^kp^k.$$
Further, we have
\begin{align}
\dim M^{HO}(V)&=\dim HO-\dim \overline{M}(V)\nonumber\\
&=2^np^n+2^kp^k-2^{s-t}p^{k+t}(2n-2s+1)-1.\nonumber
\end{align}

Next we consider $L=SHO$.
Write $\mathrm{codim}M^{SHO}(V)$ for the codimension of $M^{SHO}(V)$ in $SHO$.
Since
$$M^{SHO}(V)=M^{HO}(V)\cap SHO,$$
by Lemma \ref{3.5}, we have
\begin{align}
\mathrm{codim}M^{SHO}(V)=&\dim (\overline {M}(V)\cap SHO)\nonumber\\
=&
\Bigg(\bigg((p+2)^k+\sum^{k}_{m=2}\Big((2^{k-1}-2^{k-m})+\sum_{(j_1,\ldots,j_m)\in\mathbf{ J}(m,k)}\prod_{c=1}^m\pi_{j_c}\Big)\bigg)\cdot\nonumber\\
&\bigg(p^t2^{s-k-t}(2n-2s-1)-s+k\bigg)\Bigg)-\delta_{k,n-1}\delta_{s-t,n}2^{n-1}.\nonumber\\
\nonumber
\end{align}
Thus,
\begin{align}
 \dim M^{SHO}(V)=&\dim SHO-\mathrm{codim}M^{SHO}(V)\nonumber\\
=&\sum^{n}_{l=2}\bigg((2^{n-1}-2^{n-l})\sum_{(i_1,\ldots,i_l)\in\mathbf{J}(l,n)}\prod_{c=1}^l\pi_{i_c}\bigg)+(p+1)^n-2^n-2\nonumber\\
&-\Bigg(\bigg((p+2)^k+\sum^{k}_{m=2}\big((2^{k-1}-2^{k-m})\sum_{(j_1,\ldots,j_m)\in\mathbf{J}(m,k)}\prod_{c=1}^m\pi_{j_c}\big)\bigg)\cdot\nonumber\\
&\bigg(p^t2^{s-k-t}(2n-2s-1)-s+k\bigg)\Bigg)-\delta_{k,n-1}\delta_{s-t,n}2^{n-1}.
& \qedhere \nonumber
\end{align}
\end{proof}


\subsection{$KO(n)$ and $SKO(n,\lambda)$}
Let $L=KO(n)$ or $SKO(n,\lambda)$. Put $z=x_{2n+1}$.
Assume that $HO(n)$ is embedded (as vector space) into $KO(n)$ naturally.
Let $M$ be an MGS of type (\MyRoman{2}) for $L$.
We first consider whether $M=M(M_{-1})$.
The following lemma is obvious.
\begin{lemma}\label{th:3.6}
Suppose $V$ is a nonisotropic subspace of $KO_{-1}$. Then
\begin{equation*}
M^{KO}_{i}(V)=\begin{cases} M^{HO}_{0}(V)\oplus \mathbb{F}z,& i=0,\\
M^{HO}_{i}(V)\oplus M^{HO}_{i-2}(V)z,& i\geq1.\\
    \end{cases}
\end{equation*}
\end{lemma}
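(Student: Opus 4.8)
The plan is to reduce to a standard $V$ and then compare the defining relations of $M^{KO}_i(V)$ and $M^{HO}_i(V)$ degree by degree. First I would invoke Lemma~\ref{th:1.3}, whose automorphism $\Phi$ fixes $z=x_{2n+1}$, so that $\Phi(M^{KO}_i(V))=M^{KO}_i(\Phi(V))$ and similarly for $HO$; hence we may assume $V$ is a standard nonisotropic subspace $V=V_1=\mathrm{span}_{\mathbb F}\{x_i\mid i\in\overline{1,k}\cup\overline{1',k'}\}$, with $V_3$ its complementary symplectic space and $HO_{-1}=KO_{-1}=V_1\oplus V_3$. Writing $\mathcal O(n,n+1)=\mathcal O(n,n)\oplus\mathcal O(n,n)z$, one records the decomposition of each graded piece: for $i\geq1$ the $z$-part of $KO_i$ is $\{gz\mid g\in HO_{i-2}\}$, while for $i=0$ the only monomial $gz$ of degree $0$ comes from the constant $g=1$, giving exactly $\mathbb F z$. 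This is the source of the separate $i=0$ case, since $HO_{-2}=0$.

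Next I would extract, straight from the definition of $D_{KO}$ and the $KO$-bracket, the three formulas for brackets against $v\in V\subset KO_{-1}=HO_{-1}$: for $z$-free $f\in HO_i$ with $i\geq0$ one has $[f,v]_{KO}=D_{HO}(f)(v)$, the $HO$-bracket, and no constant arises because the polynomial degree stays $\geq1$; for $g\in\mathcal O(n,n)$ and homogeneous $v$ one has $[gz,v]_{KO}=gv+\eta\,D_{HO}(g)(v)\,z$ with $\eta=\pm1$ (signs immaterial below), the first summand $z$-free and the second the $z$-part; and $[z,v]_{KO}=v$. The case $i=0$ is then immediate: since $[z,v]=v\in V$, the $z$-coefficient is unconstrained, whereas a $z$-free $f$ must satisfy $D_{HO}(f)(v)\in V$, that is $f\in M^{HO}_0(V)$; hence $M^{KO}_0(V)=M^{HO}_0(V)\oplus\mathbb F z$.

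For $i\geq2$ I would argue by induction on $i$, using the inductive form of $M^{KO}_{i-1}(V)$. For $u=f+gz\in KO_i$ the formulas give $[u,v]=\bigl(D_{HO}(f)(v)+gv\bigr)+\eta\,D_{HO}(g)(v)\,z$; splitting into $z$-free and $z$-parts, the $z$-part condition reads $D_{HO}(g)(v)\in M^{HO}_{i-3}(V)$ for all $v\in V$, i.e.\ $g\in M^{HO}_{i-2}(V)$, and the $z$-free condition becomes $D_{HO}(f)(v)+gv\in M^{HO}_{i-1}(V)$. The auxiliary inclusion
$$
v\cdot M^{HO}_{i-2}(V)\subseteq M^{HO}_{i-1}(V)\qquad(v\in V),
$$
which I would establish separately by induction from the bi-derivation identity $D_{HO}(ab)(c)=\pm a\,D_{HO}(b)(c)\pm D_{HO}(a)(c)\,b$ (base case $V\cdot V\subseteq M^{HO}_0(V)$, using that $D_{HO}(v)(w)$ is a scalar for $v,w\in V$), then shows $gv\in M^{HO}_{i-1}(V)$, so the remaining condition is exactly $f\in M^{HO}_i(V)$. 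This gives $M^{KO}_i(V)=M^{HO}_i(V)\oplus M^{HO}_{i-2}(V)z$.

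The main obstacle is the case $i=1$, where $M^{HO}_{i-2}(V)=M^{HO}_{-1}(V)=V$ but $D_{HO}(g)(v)$ is a \emph{constant} for $g,v\in HO_{-1}$, so the $z$-part lies automatically in $\mathbb F z$ and imposes no condition on $g$. Here $g$ must instead be forced into $V$ by nondegeneracy of the symplectic form. Writing $g=g_1+g_3$ with $g_1\in V_1$ and $g_3\in V_3$, the requirement that the $V_1V_3$-component of $D_{HO}(f)(v)+gv$ vanish for all $v\in V$ must be analysed; comparing the contributions of a term $x_ax_{a'}x_r$ of $f$ to $v=x_a$ and to $v=x_{a'}$ shows these carry \emph{opposite} signs, reflecting $(x_a,x_{a'})=-(x_{a'},x_a)$, which over a field of characteristic $\neq2$ forces $g_3=0$ and hence $g\in V$. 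With $g\in V$ fixed, the $z$-free condition again reduces via the auxiliary inclusion to $f\in M^{HO}_1(V)$, completing the proof.
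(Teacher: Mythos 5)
Your overall architecture (reduce to a standard subspace, split $KO_i=HO_i\oplus HO_{i-2}z$, compare the defining conditions degree by degree, and supply the auxiliary inclusion $V\cdot M^{HO}_{i-2}(V)\subseteq M^{HO}_{i-1}(V)$, which is genuinely needed for the $z$-part $[fz,v]=\pm[f,v]z+cfv$ and which the paper's own ``$\supset$'' step glosses over) is sound. But there are two concrete problems. First, your reduction ``we may assume $V=V_1=\mathrm{span}_{\mathbb F}\{x_i\mid i\in\overline{1,k}\cup\overline{1',k'}\}$'' silently replaces \emph{nonisotropic} by \emph{nondegenerate}. A standard nonisotropic subspace is $V_1\oplus V_2$ with $V_2$ a possibly nonzero totally isotropic summand, and your $i=1$ analysis depends on the explicit shape of $M^{HO}_0(V)$ and of its complement in $HO_0$; with $V_2\neq 0$ the complement acquires the components $V_1\overline{V}_2$, $\overline{V}_2^2$, $\overline{V}_2V_3$, and the sign-comparison you describe does not automatically extend. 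Second, even in the nondegenerate case the $i=1$ step --- the crux, since the $z$-part condition is vacuous there --- is only sketched: you assert that comparing the contributions of $x_ax_{a'}x_r$ at $v=x_a$ and $v=x_{a'}$ forces $g_3=0$, but you neither fix the signs nor rule out cancellation against other monomials of $f$, so the argument is not closed.

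The paper avoids all of this with one observation that you miss and that is exactly where the hypothesis ``nonisotropic'' does its work: since $(v,w)\neq 0$ for some $v,w\in V$, the element $1$ spans $[V,V]=M_{-2}(V)$, so for $u=u_0+u'z\in M^{KO}_i(V)$ the containment $[M_{-2}(V),M_{i}(V)]\subset M_{i-2}(V)$ gives $\pm 2u'=[1,u]\in M^{KO}_{i-2}(V)$; as $u'$ is $z$-free this places $u'$ in $M^{HO}_{i-2}(V)$ uniformly for every $i\geq 1$, including $i=1$ where it yields $u'\in M_{-1}(V)=V$ at once. I would recommend replacing your $i=1$ computation by this bracket with $1\in L_{-2}$ (or at least carrying out your sign argument in full for a general standard nonisotropic $V$); as it stands the proposal proves the lemma only for nondegenerate $V$ and leaves its hardest degree resting on an unverified computation.
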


\begin{proof}
First, let us show  ``$\supset$''.
This is obvious for $i=0$.
Suppose $i>0$.
We only need to show that
$$M^{HO}_{i-2}(V)z\subset M^{KO}_i(V),$$
since $M^{HO}_{i}(V)=M^{KO}_i(V)\cap HO$.
Take  $0\neq f\in M^{HO}_{i-2}(V)$ and $ v\in V$. Then
we get
$[f,v]z\in M^{KO}_{i-1}(V)$ by induction.
So $[fz, v]\in M^{KO}_{i-1}(V)$.
Thus $ fz \in M^{KO}_i(V)$.

Next we show  ``$\subset$''.
For any $i\in \mathbb{N}$, take
$$
0\neq u=u_0+u'z \in M^{KO}_i(V),
$$
where $ u_0$, $u' \in \mathrm{Ann}(KO_{-2})$.
For $i=0$, as $z\in M^{KO}_0(V)$,
we have
$
u_0 \in M^{HO}_0(V).
$
Suppose $i>0$.
If $u'=0$, then $u\in M^{HO}_i(V)$.
If $u'\not=0$,
we have
$$
-(-1)^{|u'|}2u'=[1,u]\in M^{HO}_{i-2}(V),
$$
since $V$ is a nonisotropic subspace.
As a result $ u' \in M^{HO}_{i-2}(V)$.
Moreover,
we have
$
u_0 \in M^{KO}_i(V),
$
since $u'z \in M^{KO}_i(V)$.
Consequently, ``$\subset $'' holds.
\end{proof}

\begin{lemma}\label{th3.8}
Suppose $V$ is  a nonzero isotropic subspace of $KO_{-1}$.
Then
\begin{equation*}
M^{KO}_{i}(V)=\begin{cases} M^{HO}_{0}(V)\oplus \mathbb{F}z,& i=0,\\
M^{HO}_{i}(V)\oplus HO_{i-2}z,& i\geq1.\\
    \end{cases}
\end{equation*}
\end{lemma}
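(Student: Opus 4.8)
The plan is to prove the two inclusions ``$\supseteq$'' and ``$\subseteq$'' by induction on $i$, paralleling the proof of Lemma \ref{th:3.6} but exploiting the simplification forced by isotropy. The governing structural fact is that, since $V$ is isotropic, $M^{KO}_{-2}(V)=[V,V]=0$, so the constant $1$ no longer lies in $M^{KO}_{-2}(V)$; hence the device ``$[1,u]=-2\partial_{2n+1}(u)\in M^{HO}_{i-2}(V)$'' used in the nonisotropic case is unavailable, and the $z$-component of a homogeneous element is left unconstrained. This is precisely why the full space $HO_{i-2}z$, rather than $M^{HO}_{i-2}(V)z$, appears. Throughout I would use two one-line consequences of the bracket formula of $KO(n)$: for $g,h\in\mathcal{O}(n,n)$ and $v=x_j$ with $j\le 2n$ one has $[g,v]_{KO}=[g,v]_{HO}$, and $[z,v]=v$.

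For ``$\supseteq$'' the base case $i=0$ is immediate: $M^{HO}_0(V)\subseteq M^{KO}_0(V)$ by the coincidence of the two brackets on $\mathcal{O}(n,n)$, while $z\in M^{KO}_0(V)$ because $[z,V]=V\subseteq V$. For $i>0$ I would establish $M^{HO}_i(V)\subseteq M^{KO}_i(V)$ and $HO_{i-2}z\subseteq M^{KO}_i(V)$ together; the first is routine from the bracket coincidence and induction. For the second, take $f\in HO_{i-2}$ and $v\in V$; a direct computation from the $KO(n)$ bracket gives
\[
[fz,v]=\pm[f,v]_{HO}\,z+fv.
\]
The summand $[f,v]_{HO}\,z$ lies in $HO_{i-3}z\subseteq M^{KO}_{i-1}(V)$ by induction, so everything reduces to showing that the product $fv$ lies in $M^{HO}_{i-1}(V)$. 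This is the crux, and I expect it to be the main obstacle: the statement is false for a general subspace, but holds here because isotropy forces $\mathrm{R}(V)=0$, i.e. $k=0$. After reducing to a standard subspace via Lemma \ref{th:1.3} (whose automorphism $\Phi$ fixes $z$), this means $V=V_2$ in the notation of \eqref{4.2}, so by the explicit graded description in Lemma \ref{4.4} we have $fv\in V_2L_{-1}^{\,i}\subseteq M^{HO}_{i-1}(V)$, since every monomial of $fv$ carries the factor $v\in V_2$ together with $i$ further factors from $L_{-1}$. Hence $[fz,v]\in M^{KO}_{i-1}(V)$ and $fz\in M^{KO}_i(V)$.

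For ``$\subseteq$'' the argument becomes short once ``$\supseteq$'' is in hand. Write a homogeneous $u\in M^{KO}_i(V)$ as $u=u_0+u'z$ with $u_0,u'\in\mathcal{O}(n,n)$; the $\mathbb{Z}$-grading of $KO(n)$ forces $u'\in HO_{i-2}$ automatically (for $i=0$ one gets $u'\in\mathbb{F}$, handled as the base case). By the already-proved inclusion $HO_{i-2}z\subseteq M^{KO}_i(V)$ we get $u'z\in M^{KO}_i(V)$, whence $u_0=u-u'z\in M^{KO}_i(V)\cap\mathcal{O}(n,n)$. Finally $u_0\in M^{HO}_i(V)$, since $[u_0,v]_{KO}=[u_0,v]_{HO}\in M^{KO}_{i-1}(V)\cap\mathcal{O}(n,n)=M^{HO}_{i-1}(V)$ by induction. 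Thus $u\in M^{HO}_i(V)\oplus HO_{i-2}z$, which closes the induction. Apart from the reduction establishing $fv\in M^{HO}_{i-1}(V)$, the whole proof is bookkeeping with the two compatible brackets and the grading.
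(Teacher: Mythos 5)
Your proof is correct and follows essentially the same route as the paper: the containment $\supseteq$ is reduced to the bracket identity $[fz,v]=\pm[f,v]_{HO}z+fv$ together with the observation that isotropy of $V$ forces $fv\in M^{HO}_{i-1}(V)$, and then $\subseteq$ follows by splitting $u=u_0+u'z$. The only difference is one of detail: the paper asserts $fv\in M^{HO}_{i-1}(V)$ and the reverse inclusion without justification, whereas you supply the verification via Lemma \ref{th:1.3} and Lemma \ref{4.4}, which is a correct filling-in rather than a new approach.
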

\begin{proof}
First let us show ``$ \supset $''.
This is obvious for $i=0$. Suppose $i>0$.
Note that  $M^{HO}_{i}(V)\subset M^{KO}_i(V)$.
Thus we have to show that $HO_{i-2}z\subset M^{KO}_i(V).$
For any $0\neq f\in HO_{i-2}$ and $v\in V$,
we see that
$[f,v]z\in M^{KO}_{i-1}(V)$
by induction.
Moreover,
we have $fv \in M^{HO}_{i-1}(V)$, since
$V$ is an isotropic subspace of $KO_{-1}$.
So
$
[v, fz]\in M^{KO}_{i-1}(V).
$
Thus $ fz \in M^{KO}_i(V)$.
Then the inclusion  ``$\subset $'' is obvious.
\end{proof}

\begin{lemma}\label{th:3.7}
Let $V$ be a proper subspace of $L_{-1}$. Then
$M(V)$ is an MGS of type (\MyRoman{2}) for $L$
if and only if $V\in\mathcal{V}^L$.
\end{lemma}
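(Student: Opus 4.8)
The plan is to mirror the proof of Lemma \ref{th:3.3} for $HO(n)$ and $SHO(n)$, transferring each computation to $KO$ and $SKO$ through the reduction formulas of Lemmas \ref{th:3.6} and \ref{th3.8}, which express $M^{KO}_i(V)$ in terms of $M^{HO}_i(V)$ and the Contact variable $z=x_{2n+1}$. By Lemma \ref{th:1.3} I may assume throughout that $V$ is a standard subspace of $L_{-1}$, so that the standard torus $T$ lies inside $M_0(V)$. I would treat the two implications separately and, within the sufficiency, split according to whether $V$ is nonisotropic or isotropic.

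For sufficiency, assume $V\in\mathcal{V}^L$ and let $N$ be a graded subalgebra with $N\supsetneq M(V)$; the goal is $N=L$. Choosing a homogeneous $u\in N\setminus M(V)$ and using that $T\subset M(V)$ together with Lemma \ref{th3.2}(2), I may take $u$ to be a weight vector. The defining relation $M_i(V)=\{u\in L_i\mid[u,V]\subset M_{i-1}(V)\}$ then lets me bracket $u$ repeatedly against elements of $V$ to descend in degree until reaching a nonzero element of $L_{-1}\setminus V$ or of $L_{-2}$. When $V$ is nonisotropic, $M_{-2}(V)=L_{-2}$, so the descent must terminate at some $x_i\in L_{-1}\setminus V$; Lemma \ref{th:3.6} then reduces the situation to the $HO$ computation, and the weight-space manipulations of Lemma \ref{th:3.3} (via Lemma \ref{2.2}) give $N_{-1}=L_{-1}$. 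Transitivity together with $L=\langle L_{-1}+L_0+L_1\rangle$ (Lemma \ref{th:1.2} and its $SKO$ analogue) forces $N\supset M(L_{-1})=L$.

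When $V$ is isotropic, $M_{-2}(V)=0$ and the descent has two possible outcomes. If it ends at $x_i\in L_{-1}\setminus V$, then since $\mathrm{misdim}V\neq n-1$ there is enough room to adjoin a symplectic partner whose bracket with $V$ produces a nonzero element of $L_{-2}$; this enlarges $N_{-1}$ past an isotropic subspace and, on iterating, yields $L_{-1}\subset N$ and hence $N=L$. If instead the descent only reaches $1\in L_{-2}$, I would use $\mathrm{ad}(1)=-2\partial_{2n+1}$ acting on the $z$-rich components $HO_{i-2}z\subset M_i(V)$ supplied by Lemma \ref{th3.8}; this recovers the entire $HO$-part, after which $z\in M_0(V)$ and $L_{-2}$ again generate $L$. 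For necessity I argue by contraposition exactly as in Lemma \ref{th:3.3}: if $\mathrm{misdim}V=n-1$, I exhibit a degree-$\geq 0$ element (an analogue of $x_ix_n$) lying outside $M(V)$ whose adjunction still omits some $x_ix_{n'}$, producing a proper graded subalgebra strictly containing $M(V)$, so that $M(V)$ is not maximal.

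The main obstacle is the isotropic case, where the extra bottom component $L_{-2}$ and the variable $z$ genuinely change the geometry: I must check that enlarging $M(V)$ never merely reproduces the companion subalgebra $M(L_{-2},V)$ of Lemma \ref{th:3.10}, and that the hypothesis $\mathrm{misdim}V\neq n-1$ is precisely what guarantees enough room in $L_{-1}$ to reconstruct the whole space once a partner has been adjoined. Confirming that $M(V)$ and $M(L_{-2},V)$ are incomparable — so that both survive as maximal subalgebras — is the delicate bookkeeping that the reduction Lemmas \ref{th:3.6} and \ref{th3.8} are designed to make tractable.
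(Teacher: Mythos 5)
Your proposal is correct and follows essentially the same route as the paper, which disposes of this lemma in one line by declaring it immediate from the proof of Lemma \ref{th:3.3}. The extra detail you supply --- the reduction to the $HO$-computation via Lemmas \ref{th:3.6} and \ref{th3.8}, the separate treatment of the isotropic case where $1\in L_{-2}$ and the components $HO_{i-2}z$ must be brought into play, and the check that $M(V)$ and $M(L_{-2},V)$ are incomparable --- is precisely the bookkeeping the paper leaves implicit.
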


\begin{proof}
It is immediate from the proof of Lemma \ref{th:3.3}.
\end{proof}
The following lemma is a direct consequence of Lemma \ref{th:3.7}.
\begin{lemma}
Let $M$ be an MGS of type (\MyRoman{2}) for $L$ and $[M_{-1}, M_{-1}]=M_{-2}$.
 Then $M=M(M_{-1})$.
\end{lemma}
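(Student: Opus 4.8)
The plan is to prove the single inclusion $M\subseteq M(M_{-1})$ and then close the argument by maximality. Write $V=M_{-1}$. Since $M$ is of type (\MyRoman{2}), $V$ is a nontrivial---hence proper---subspace of $L_{-1}$, so $M(V)_{-1}=V\neq L_{-1}$ and therefore $M(V)=M(M_{-1})$ is a proper $\mathbb{Z}$-graded subalgebra of $L$ (indeed, by Lemma \ref{th:3.7} it is itself an MGS whenever $V\in\mathcal{V}^L$). Once $M\subseteq M(M_{-1})$ is known, the maximality of $M$ together with the properness of $M(M_{-1})$ will immediately give $M=M(M_{-1})$, exactly as in the proof of Theorem \ref{03.2}(1) for $HO(n)$ and $SHO(n)$.

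First I would establish $M\subseteq M(V)$ by induction on the grading degree $i\geq -2$, comparing homogeneous components $M_i\subseteq M_i(V)$. The two lowest degrees form the base of the induction and are the only place where the hypothesis enters: at degree $-1$ one has $M_{-1}=V=M_{-1}(V)$ by definition, while at degree $-2$ the hypothesis $[M_{-1},M_{-1}]=M_{-2}$ gives $M_{-2}=[V,V]=M_{-2}(V)$. This matching at degree $-2$ is precisely what distinguishes the present situation from the companion family $M(L_{-2},V)$, where $M_{-2}=L_{-2}$ strictly contains $[V,V]$; so the hypothesis is exactly the condition that selects $M(M_{-1})$ rather than $M(L_{-2},M_{-1})$.

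For the inductive step at $i\geq 0$, take $u\in M_i$. Because $M$ is a subalgebra and $V=M_{-1}$, we have $[u,V]=[u,M_{-1}]\subseteq M_{i-1}$, and the induction hypothesis places this in $M_{i-1}(V)$; by the defining condition $M_i(V)=\{u\in L_i\mid [u,V]\subseteq M_{i-1}(V)\}$ we conclude $u\in M_i(V)$, whence $M_i\subseteq M_i(V)$ for all $i$ and $M\subseteq M(V)$. Note that for the family $M(V)$ the defining constraint involves only brackets with $V$, so no condition on $[u,L_{-2}]$ intervenes in this comparison. Combining with the first paragraph finishes the argument.

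I expect no genuine obstacle: the content is the elementary induction, whose only delicate point is the degree $-2$ base case. The role of the hypothesis there is essential---without it the closure relation only yields $[M_{-1},M_{-1}]\subseteq M_{-2}$, the wrong inclusion for embedding $M$ into $M(V)$---so the main thing to get right is to invoke $[M_{-1},M_{-1}]=M_{-2}$ at exactly that step and to record that $M(M_{-1})$ is proper so that the maximality of $M$ applies.
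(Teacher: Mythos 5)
Your proof is correct and follows essentially the same route as the paper: the inclusion $M_i\subseteq M_i(M_{-1})$ by induction on the degree (with the hypothesis $[M_{-1},M_{-1}]=M_{-2}$ supplying the degree $-2$ base case, which is exactly what separates this from the $M(L_{-2},V)$ family), followed by maximality of $M$ against the proper subalgebra $M(M_{-1})$. This is the same induction the paper carries out in the proof of Theorem \ref{03.2}(1) and then cites via Lemma \ref{th:3.7}; your identification of where the hypothesis is essential is accurate.
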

\begin{remark}
If $[M_{-1}, M_{-1}]\neq M_{-2}$, then $M \neq M(M_{-1})$.
\end{remark}
Next we determine MGS, $M$, of type (\MyRoman{2}) for $L$ with $[M_{-1}, M_{-1}]\neq M_{-2}$.
Let $V$ be an isotropic subspace of $ L_{-1}$.
Recall that
\begin{align}
&M_{-2}(L_{-2},V)=L_{-2}, \quad M_{-1}(L_{-2},V)=V,     \nonumber    \\
&M_{i}(L_{-2},V)=\{u\in L_i\mid [u,V]\subset M_{i-1}(L_{-2},V), [u,L_{-2}]\subset M_{i-2}(L_{-2},V)\}. \nonumber
\end{align}
\begin{lemma}\label{th:3.11}
Let $V$ be an isotropic subspace of $KO_{-1}$.
Then
\begin{equation*}
M_i(KO_{-2},V)=\begin{cases} M^{HO}_0(V)\oplus \mathbb{F}z,& i=0,\\
M^{HO}_{i}(V)\oplus M^{HO}_{i-2}(V)z,& i\geq1.\\
    \end{cases}
\end{equation*}
\end{lemma}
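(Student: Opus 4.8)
The plan is to prove the identity by induction on $i$, exploiting the close parallel with Lemma \ref{th3.8} and isolating the single new ingredient supplied by the defining condition $[u,L_{-2}]\subset M_{i-2}(KO_{-2},V)$. The base case $i=0$ is immediate: since $[L_0,L_{-2}]\subset L_{-2}=M_{-2}(KO_{-2},V)$ by the grading, the second defining condition is vacuous, so $M_0(KO_{-2},V)=\{u\in L_0\mid [u,V]\subset V\}=M^{KO}_0(V)=M^{HO}_0(V)\oplus\mathbb{F}z$ by Lemma \ref{th3.8}. Before the inductive step I would record the one computation in $KO(n)$ that drives everything: writing each $u\in\mathcal O(n,n+1)$ as $u=u_0+u'z$ with $u_0,u'\in\mathcal O(n,n)=\mathrm{Ann}(KO_{-2})$, the bracket with the degree $-2$ generator collapses to $[1,u]=-2\partial_{2n+1}(u)$, so that $[u,L_{-2}]=\mathbb{F}\,u'$ (the scalar $\pm2$ being invertible as $p>3$). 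This is precisely the identity that will turn the full coefficient space $HO_{i-2}z$ of Lemma \ref{th3.8} into the smaller space $M^{HO}_{i-2}(V)z$.

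For the inclusion $\subset$, I would fix $i\geq1$ and $u\in M_i(KO_{-2},V)$. Since by induction $M_{i-1}(KO_{-2},V)\subset M^{KO}_{i-1}(V)$, the condition $[u,V]\subset M_{i-1}(KO_{-2},V)$ already forces $u\in M^{KO}_i(V)=M^{HO}_i(V)\oplus HO_{i-2}z$ by Lemma \ref{th3.8}; hence $u=u_0+u'z$ with $u_0\in M^{HO}_i(V)$ and $u'\in HO_{i-2}$. The second defining condition now reads $[u,L_{-2}]=\mathbb{F}\,u'\subset M_{i-2}(KO_{-2},V)$, and since $u'$ involves no $z$, comparison with the inductive form $M_{i-2}(KO_{-2},V)=M^{HO}_{i-2}(V)\oplus M^{HO}_{i-4}(V)z$ forces $u'\in M^{HO}_{i-2}(V)$. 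Thus $u\in M^{HO}_i(V)\oplus M^{HO}_{i-2}(V)z$.

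For the reverse inclusion $\supset$ I would check the two defining conditions on each summand. An element $f\in M^{HO}_i(V)$ satisfies $[f,V]\subset M^{HO}_{i-1}(V)\subset M_{i-1}(KO_{-2},V)$ by induction, while $[f,L_{-2}]=-2\mathbb{F}\,\partial_{2n+1}(f)=0$ because $f\in\mathcal O(n,n)$, so $f\in M_i(KO_{-2},V)$. For $gz$ with $g\in M^{HO}_{i-2}(V)$, the bracket with $v\in V$ decomposes, exactly as in the proof of Lemma \ref{th3.8}, into $[v,g]z$ plus a scalar multiple of the product $gv$; here $[v,g]z\in M^{HO}_{i-3}(V)z\subset M_{i-1}(KO_{-2},V)$ by induction, and $gv\in M^{HO}_{i-1}(V)\subset M_{i-1}(KO_{-2},V)$ by the isotropy of $V$, so $[gz,V]\subset M_{i-1}(KO_{-2},V)$; moreover $[gz,L_{-2}]=\mathbb{F}\,g\subset M^{HO}_{i-2}(V)\subset M_{i-2}(KO_{-2},V)$ by the key computation. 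Hence $gz\in M_i(KO_{-2},V)$, completing the induction.

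I expect the main obstacle to be bookkeeping rather than conceptual. The delicate point is the $\supset$ direction for $gz$, where one must reproduce the precise splitting of $[v,gz]$ into its $z$-part and its product part and invoke isotropy to place $gv$ in $M^{HO}_{i-1}(V)$; this is the same calculation that underlies Lemma \ref{th3.8} and can largely be cited from there. The genuinely new content, by contrast, is entirely carried by the identity $[1,\cdot]=-2\partial_{2n+1}$, which mechanically explains why the extra condition $[u,L_{-2}]\subset M_{i-2}(KO_{-2},V)$ shrinks the coefficient space of $z$ from $HO_{i-2}$ to $M^{HO}_{i-2}(V)$.
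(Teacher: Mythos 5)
Your argument is correct. The paper actually states Lemma \ref{th:3.11} without proof, so there is nothing to compare line by line; your induction is exactly the argument the authors implicitly intend, mirroring their proofs of Lemmas \ref{th:3.6} and \ref{th3.8}, with the one genuinely new point --- that $[1,u]=-(-1)^{|u'|}2\,u'$ together with the inductive description of $M_{i-2}(KO_{-2},V)$ cuts the $z$-coefficient space from $HO_{i-2}$ down to $M^{HO}_{i-2}(V)$ --- correctly identified and verified.
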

\begin{lemma}\label{th:3.12}
$M(L_{-2},V)$ is an MGS of $L$ if and only if $V\in \mathfrak{V}_{\mathfrak{i}}^L$.
\end{lemma}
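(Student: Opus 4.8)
The plan is to prove both implications after reducing, via the automorphism of Lemma \ref{th:1.3}, to the case that $V$ is a standard isotropic subspace, and then to analyse the subalgebra generated by $M(L_{-2},V)$ together with one extra homogeneous element. I will carry out the argument for $L=KO(n)$; the case $L=SKO(n,\lambda)$ then follows by intersecting with $SKO$ and accounting for the codimension of $SKO_0$ in $KO_0$, exactly as in the $SHO$ reduction at the end of the proof of Lemma \ref{th:3.3}. Since a $\mathbb Z$-homogeneous automorphism $\Phi$ preserves $L_{-2}$ and the form $(-,-)$, it carries $M(L_{-2},V)$ onto $M(L_{-2},\Phi V)$ and preserves isotropy and $\mathrm{misdim}$; hence I may assume $V$ is standard, so that $k=\mathrm R(V)/2=0$, $V=V_2$, $s=\dim V=\mathrm{misdim}V$, and $V_3=\mathrm{span}_{\mathbb F}\{x_i\mid i\in\mathbf I_{v_3}\}$ has dimension $2(n-s)$, in the notation of \eqref{4.2}.

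For the sufficiency I assume $V\in\mathfrak V_{\mathfrak i}^{L}$, i.e.\ $V$ isotropic with $s\ne n-1$. First I note that $T_{KO}\subset M(L_{-2},V)$: indeed $z\in M_0(L_{-2},V)$ by Lemma \ref{th:3.11}, and each $x_ix_{i'}$ lies in $M^{HO}_0(V)=V_2L_{-1}\oplus V_3^2$ (in $V_2L_{-1}$ when $i\in\mathbf I_{v_2}\cup\mathbf I_{\overline v_2}$, and in $V_3^2$ when $i\in\mathbf I_{v_3}$). Now take $u\in L\setminus M(L_{-2},V)$ and set $\widetilde M=\langle M(L_{-2},V),u\rangle$. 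By Lemma \ref{th:3.10}, $M(L_{-2},V)$ is the largest graded subalgebra whose negative part equals $L_{-2}\oplus V$; as $\dim L_{-2}=1$ forces $\widetilde M_{-2}=L_{-2}$, the strict inclusion $\widetilde M\supsetneq M(L_{-2},V)$ forces $\widetilde M_{-1}\supsetneq V$. Since $T_{KO}\subset\widetilde M$ and $V$ is $T$-homogeneous, Lemma \ref{th3.2} lets me extract from any element of $\widetilde M_{-1}\setminus V$ a $T$-weight vector $x_i\in\widetilde M_{-1}$ with $x_i\notin V$, so either $x_i\in\overline V_2$ or $x_i\in V_3$.

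The crux is to upgrade this to $\widetilde M_{-1}=L_{-1}$. If $x_i\in\overline V_2$, then $i'\in\mathbf I_{v_2}$, so $x_{i'}\in V$ and $x_{i'}x_b\in V_2L_{-1}\subset M_0(L_{-2},V)$ for all $b$; evaluating $[x_{i'}x_b,x_i]$ with the nondegenerate form produces a nonzero multiple of $x_b$ for every $b\ne i'$, whence $L_{-1}\subset\widetilde M$ with no restriction on $s$. If instead $x_i\in V_3$, I use $V_3^2\subset M_0(L_{-2},V)$ to move inside $V_3$, but recovering the partner $x_{i'}$ needs a second hyperbolic pair in $V_3$, i.e.\ $\dim V_3\ge 4$, which is precisely $n-s\ge 2$; this is where $s\ne n-1$ (with $s\le n$) is consumed. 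Having all of $V_3$, I reach $\overline V_2$ through the summand $V_3^2\overline V_2\subset M^{HO}_1(V)\subset M_1(L_{-2},V)$ (Lemma \ref{th:3.11} and Lemma \ref{4.4}): bracketing $x_px_qx_c$ ($p,q\in\mathbf I_{v_3}$, $c\in\mathbf I_{\overline v_2}$) against $x_{p'},x_{q'}\in V_3$ yields $x_c$, giving $L_{-1}\subset\widetilde M$ again. Once $\widetilde M_{-1}=L_{-1}$, the equality $\widetilde M=L$ follows as in the closing lines of the proof of Lemma \ref{th:3.3}, using transitivity and $M(L_{-1})=L$ (Lemma \ref{th:1.2}). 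Thus $M(L_{-2},V)$ is maximal.

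For the necessity I argue the contrapositive. If $V$ is isotropic with $s=n-1$, then $V_3=\mathrm{span}_{\mathbb F}\{x_n,x_{n'}\}$ is two-dimensional, and I exhibit a proper subalgebra strictly between $M(L_{-2},V)$ and $L$. Choosing $i\in\mathbf I_{\overline v_2}$, the Lagrangian $V'=V\oplus\mathbb Fx_n\in\mathfrak V_{\mathfrak i}^{L}$ (as $\mathrm{misdim}V'=n\ne n-1$), and using Lemmas \ref{th:3.11} and \ref{4.4} one checks $M^{HO}(V)\subseteq M^{HO}(V')$, hence $M(L_{-2},V)\subseteq M(L_{-2},V')$, while $x_ix_n\in M(L_{-2},V')\setminus M(L_{-2},V)$; so $M(L_{-2},V)\subsetneq M(L_{-2},V')\subsetneq L$, the last inclusion being strict because $M(L_{-2},V')$ is an MGS by the sufficiency just proved. (Equivalently, one mirrors the necessity step of Lemma \ref{th:3.3} and verifies $x_ix_{n'}\notin\langle M(L_{-2},V),x_ix_n\rangle$ directly.) Hence $M(L_{-2},V)$ is not maximal. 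I expect the main obstacle to be the $x_i\in V_3$ subcase of the generation step—making precise how $\dim V_3\ge 4$ recovers both the partner $x_{i'}$ and, through $M_1(L_{-2},V)$, the whole of $\overline V_2$—since that is exactly where the numerical hypothesis $\mathrm{misdim}V\ne n-1$ is used.
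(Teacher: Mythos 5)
The paper actually states Lemma \ref{th:3.12} with no proof at all (it is followed immediately by the proof of Theorem \ref{03.2}(2)), the evident intention being that it is handled ``as in'' Lemmas \ref{th:3.3} and \ref{th:3.7}. Your proposal correctly supplies the missing argument and follows exactly that template: reduce to a standard isotropic $V$ via Lemma \ref{th:1.3}, use $T\subset M_0(L_{-2},V)$ and Lemma \ref{th3.2} to extract a weight vector $x_i\in\widetilde M_{-1}\setminus V$, and split on $x_i\in\overline V_2$ versus $x_i\in V_3$, the latter branch consuming $\mathrm{misdim}\,V\neq n-1$ precisely as in the proof of Lemma \ref{th:3.3}; the descriptions in Lemmas \ref{4.4} and \ref{th:3.11} that you invoke do give $V_3^2\overline V_2\subset M_1(L_{-2},V)$, so the generation of $\overline V_2$ goes through. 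The one genuinely different touch is your necessity step, where you sandwich $M(L_{-2},V)\subsetneq M(L_{-2},V\oplus\mathbb Fx_n)\subsetneq L$ instead of adjoining a single element $x_ix_n$ as the paper does for $M(V)$; both work, and your variant avoids checking by hand that $x_ix_{n'}\notin\langle M(L_{-2},V),x_ix_n\rangle$ (the inclusion $M(L_{-2},V)\subseteq M(L_{-2},V')$ does need the explicit descriptions, since every monomial of $V_3^l$, $l>1$, contains a factor $x_n$, but you effectively note this). The only soft spot is the final passage from $L_{-1}\subseteq\widetilde M$ to $\widetilde M=L$, which you delegate to the closing lines of the proof of Lemma \ref{th:3.3}; those lines assert $M\supset M(M_{-1})$, which is backwards (one always has $M\subseteq M(M_{-1})$), and what is really needed is that $[M_1(L_{-2},V),L_{-1}]$ together with $M_0(L_{-2},V)$ spans $L_0$, that $U(L_0)$ applied to $Vz$ and to $VL_{-1}^2$ recovers $L_{1,1}$ and $L_{1,0}$ by Lemma \ref{th:2.5}, and then $L=\langle L_{-1}+L_0+L_1\rangle$ from Lemma \ref{th:1.2}. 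This gap is inherited from the paper rather than introduced by you, but if you intend your proof to stand alone you should spell that step out.
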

\begin{proof}[\textit{Proof of Theorem \ref{03.2}(2)}]
Let $M$ be an MGS of type  (\MyRoman{2})  for $L=KO(n)$ or $SKO(n,\lambda)$.
Suppose $V=M_{-1}$. From Lemma \ref{th:1.3},
there is an automorphism
$\Phi\in L$ such that $\Phi(\widetilde{V})=V$,
where $\widetilde{V}$ is a standard subspace  of $L_{-1}$.
We have
$$
\Phi(M(\widetilde{V}))=M(\Phi(\widetilde{V}))=M(V).
$$
If $V=M_{-1}$ is nonisotropic, then
$
\Phi(M(\widetilde{V}))=M
$
by $M\subset M(V)$.
If $V=M_{-1}$ is isotropic and $M_{-2}=0$, then $M(V)=\Phi(M(\widetilde{V}))=M$.
If $V=M_{-1}$ is isotropic and $M_{-2}\neq0$, then $M(L_{-2},V)=\Phi(M(L_{-2}, \widetilde{V}))=M$.
Furthermore, (a), (b) and (c) of Theorem \ref{03.2}(2) is obvious by Lemmas \ref{th:3.7} and \ref{th:3.12}.
Next we calculate the dimensions of $M^{KO}(V)$ and $M(KO_{-2}, V)$ separately.
If $V\in\mathcal{V}^{KO}$ and $V$ is nonisotropic, then by Lemma \ref{th:3.6}, we have
$$
\dim M^{KO}(V)=2(\dim M^{HO}(V)+1)
=2^{n+1}p^n+2^{k+1}p^k-2^{s-t+1}p^{k+t}(2n-2s+1),
$$
where $k=\mathrm{R}(V)/2$,
$s=\mathrm{misdim}V$ and $t=\dim V_{\bar 0}-k$.
If $V\in\mathfrak{V}_{\mathfrak{i}}^{KO}$, by Lemmas \ref{th3.8} and  \ref{th:3.11}, we have
\begin{align}
\dim M^{KO}(V)&=\dim M^{HO}(V)+p^n2^n=2^{n+1}p^n-2^{s-t}p^{t}(2n-2s+1);\nonumber\\
\dim M(KO_{-2},V)&=2(\dim M^{HO}(V)+1)=2^{n+1}p^n-2^{s-t+1}p^t(2n-2s+1)+2\nonumber
\end{align}
where $s=\mathrm{misdim}V$ and $t=\dim V_{\bar 0}$.
\end{proof}

\section{MGS  of Type (\textrm{III})}
Suppose $L$ is a Lie superalgebra of odd Cartan type.
We discuss the MGS, $M$, of type (\MyRoman{3}) for $L$,
that is,  $M_{-1}=L_{-1} ~\mbox{ while }~ M_{0}\neq L_{0}$.
Let $A_{0}$ be a subalgebra of $L_{0}$.
If $L_{-1}$ is an irreducible $A_{0}$-module,
then we say that $A_{0}$ is irreducible;
otherwise we say that $A_{0}$ is reducible.
\begin{definition}
A graded subalgebra $A$
of $L$
is called an R-subalgebra (resp. S-subalgebra)
if $A_{0}$ is a reducible (resp. irreducible) subalgebra in $L_0$.
\end{definition}
Let $A_{0}$ be a nontrivial subalgebra of $L_{0}$.
We define a subalgebra of $L$ as follows:
\begin{equation*}
 M(L_{-1},A_{0})=\oplus_{i\geq-2}M_{i}(L_{-1},A_{0}),
\end{equation*}
where
$$
M_{i}(L_{-1},A_{0})
=\begin{cases} L_{i}, & i<0,\\
A_{0}, & i=0,\\
\left\{u\in L_{i}\mid [u,L_{-1}]\subset M_{i-1}(L_{-1},A_{0})\right\}, & i>0.\\
    \end{cases}
$$
Recall that
\begin{align}
\mathcal{V}^L&=\{V\mid \mbox{$V$ is a nontrivial subspace of $L_{-1}$ with $\mathrm{misdim}V\neq n-1$}\};\nonumber\\
\mathfrak{V}_{\mathfrak{i}}^{L}&=\{V\in \mathcal{V}^L\mid \mbox{$V$ is an isotropic subspace of $L_{-1}$}\};\nonumber\\
\mathfrak{V}_{\mathfrak{n}}^{L}&=\{V\in \mathcal{V}^L\mid \mbox{$V$ is a nondegenerate subspace of $L_{-1}$}\}.\nonumber
\end{align}
For integers $k,l>0$ and $k \leq l$,
$$
\mathbf{J}(k,l)=\{(i_1,\ldots,i_k)\mid 1\leq i_1<\cdots< i_k\leq l\}.
$$
We have the following theorem:
\begin{theorem}\label{0.003}
All the maximal R-subalgebras of type (\MyRoman{3}) for Lie superalgebras of odd Cartan type are characterized as follows:

(1) For $L=HO(n)$ or $SHO(n)$, we have:
\begin{enumerate}
\item[(a)]    All the maximal R-subalgebras of type (\MyRoman{3}) for $L$ are precisely
$$
\{M(L_{-1}, M_0(V))\mid V\in \mathfrak{V}_{\mathfrak{n}}^{L}\cup \mathfrak{V}_{\mathfrak{i}}^{L}\}.
$$
\item[(b)]
 For any  $V$ and $\widetilde{V}$ in $\mathfrak{V}_{\mathfrak{i}}^{L}$,
 $$
M(L_{-1}, M_0(V))\stackrel{\mathrm{alg}}{\cong} M(L_{-1}, M_0(\widetilde{V}) )\Longleftrightarrow
V\stackrel{\mathrm{sspace }}{\cong} \widetilde{V}.
$$
For any $V$ and $\widetilde{V}$ in $\mathfrak{V}_{\mathfrak{n}}^{L}$,
\begin{equation*}
M(L_{-1}, M_0(V))\stackrel{\mathrm{alg}}{\cong} M(L_{-1}, M_0(\widetilde{V}) )\Leftrightarrow
\dim V+\dim \widetilde{V}=2n ~\mbox{or}~\dim V=\dim \widetilde{V}.
\end{equation*}
For any $V\in\mathfrak{V}_{\mathfrak{n}}^{L}$ and $\widetilde{V}\in\mathfrak{V}_{\mathfrak{i}}^{L}$,
$$
M(L_{-1}, M_0(V))\stackrel{\mathrm{alg}}{\ncong} M(L_{-1}, M_0(\widetilde{V})).
$$
\item[(c)]
$L$ has exactly
$\lfloor(n^2+2n-2)/2\rfloor$ pairwise non-isomorphic maximal R-subalgebras of type  (\MyRoman{3}).
\item[(d)]
We have the following dimension formulas:
\begin{enumerate}
\item if $V \in \mathfrak{V}_{\mathfrak{n}}^{L}$, then
\begin{align}
&\dim M(HO_{-1}, M_0(V))=(2p)^{k}+(2p)^{n-k}-2;\nonumber\\
&\dim M(SHO_{-1}, M_0(V))\nonumber\\
=&\sum^{k}_{l=2}\bigg(\big(2^{k-1}-2^{k-l}\big)\sum_{(i_1,\ldots,i_l)\in\mathbf{J}(l,k)}\prod_{c=1}^l\pi_{i_c}\bigg)+(p+1)^k\nonumber\\
&+\sum^{n-k}_{m=2}\bigg((2^{n-k-1}-2^{n-k-m})\sum_{(j_1,\ldots,j_m)\in\mathbf{J}(m,n-k)}\prod_{c=1}^m\pi_{j_c}\bigg)+(p+1)^{n-k}-2.\nonumber
\end{align}
\item if $V \in \mathfrak{V}_{\mathfrak{i}}^{L}$, then
\begin{align}
&\dim M(HO_{-1}, M_0(V))=p^{n-s+t}2^{n-t}+p^{t}2^{s-t}s-1;\nonumber\\
&\dim M(SHO_{-1}, M_0(V))\nonumber\\
=&p^t2^{n-s}\bigg(\sum^{n-s}_{l=2}\bigg(\big(2^{n-s-1}-2^{n-s-l}\big)\sum_{(i_1,\ldots,i_l)\in\mathbf{J}(l,n-s)}\prod_{c=1}^l\pi_{i_c}\bigg)+(p+1)^{n-s}-1\bigg)\nonumber\\
&-2^{n-s}+(t-1)2^sp^{t-2}(p-1)^2+(s-1)p^t2^{s-2}+p^{t-1}(p-1)2^{s-1},\nonumber
\end{align}
where $s=\mathrm{misdim}V$
and
$t=\dim V_{\bar 0}-k$.
\end{enumerate}
\end{enumerate}

(2) For $L=KO(n)$ or $SKO(n,\lambda)$, we have:
\begin{enumerate}
\item[(a)]
All the maximal R-subalgebras of type (\MyRoman{3}) for $L$  are precisely
$$
 \{M(L_{-1}, M_0(V)) \mid V \in \mathfrak{V}_{\mathfrak{i}}^L\}.
$$
\item[(b)]
 For any  $V$ and $\widetilde{V}$ in $\mathfrak{V}_{\mathfrak{i}}^L$,
$$
M(L_{-1}, M_0(V))\stackrel{\mathrm{alg}}{\cong} M(L_{-1}, M_0(\widetilde{V}) )\Longleftrightarrow
V\stackrel{\mathrm{sspace }}{\cong} \widetilde{V}.
$$
\item[(c)]
$L$ has exactly $n(n+1)/2$ pairwise non-isomorphic maximal R-subalgebras of type  (\MyRoman{3}).
\item[(d)]For any $ V \in \mathfrak{V}_{\mathfrak{i}}^L$,
we have the following dimension formulas:
 $$
\dim M(KO_{-1}, M_0(V))=\begin{cases}p^{n-s+t}2^{n-t+1}+p^t2^{s-t}s, &\ s\neq n.\\p^t2^{n-t+1}(1+n), &  \ s=n.
\end{cases}
$$
where $s=\mathrm{misdim}V$
and
$t=\dim V_{\bar 0}$.
\end{enumerate}
\end{theorem}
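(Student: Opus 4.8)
The plan is to reduce the classification of maximal R-subalgebras of type $(\mathrm{III})$ to the classification, up to the symplectic structure on $L_{-1}$, of the proper invariant subspaces of reducible subalgebras of $L_0$. Suppose $M$ is such a maximal R-subalgebra. Then $M_{-1}=L_{-1}$ and $M_0\subsetneq L_0$ is reducible, so $M_0$ stabilizes a nontrivial proper subspace $V_0\subset L_{-1}$. The key observation is that $M_0(V_0)=\{u\in L_0\mid [u,V_0]\subset V_0\}$ is exactly the stabilizer of $V_0$ in $L_0$, so $M_0\subset M_0(V_0)$ and hence $M\subset M(L_{-1},M_0(V_0))$. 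Writing $V_0=V_{\mathrm{rad}}\oplus V_{\mathrm{nd}}$ with $V_{\mathrm{rad}}=V_0\cap V_0^{\bot}$, any element of $L_0$ preserving $V_0$ preserves $V_0^{\bot}$ and hence $V_{\mathrm{rad}}$, giving $M_0(V_0)\subset M_0(V_{\mathrm{rad}})$; so whenever $V_0$ is neither isotropic nor nondegenerate its stabilizer is not maximal and we replace $V_0$ by the isotropic radical. After using Lemma \ref{th:1.3} to put $V$ in standard form, this shows every maximal R-subalgebra is $M(L_{-1},M_0(V))$ for an isotropic or nondegenerate $V$, and maximality of $M$ forces equality since $M_0(V)\neq L_0$ by irreducibility of $L_{-1}$ as an $L_0$-module.

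For the converse I would show $M=M(L_{-1},M_0(V))$ is genuinely maximal when $V$ is isotropic or nondegenerate with $\mathrm{misdim}V\neq n-1$ (and, for $L=KO$ or $SKO$, only when $V$ is isotropic). Let $M\subsetneq N\subseteq L$ with $N$ graded. Because $M_{-1}=L_{-1}$, $M_{-2}=L_{-2}$ and $M_i=\{u\in L_i\mid [u,L_{-1}]\subset M_{i-1}\}$ for $i>0$, the lowest degree where $N$ strictly exceeds $M$ is $0$: an element $u\in N_i\setminus M_i$ with $i\geq 1$ would satisfy $[u,L_{-1}]\subset N_{i-1}=M_{i-1}$, forcing $u\in M_i$. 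Hence $N_0\supsetneq M_0(V)$. Since the standard torus lies in $M_0(V)$, Lemma \ref{th3.2} reduces matters to a weight vector in $L_0\setminus M_0(V)$, and the central computation is that $M_0(V)$ is a maximal subalgebra of $L_0$ --- the periplectic analogue of the statement that the stabilizer of an isotropic (resp.\ nondegenerate) subspace is a maximal parabolic (resp.\ maximal reductive) subalgebra, which is precisely the point reduced to $\mathfrak{p}(n)$. Once $N_0=L_0$, the module generation results (Lemmas \ref{th:2.1} and \ref{th:2.5}, with their $SHO$, $SKO$ analogues) give $N_1\supseteq U(L_0)\,M_1(L_{-1},M_0(V))=L_1$, provided $M_1(L_{-1},M_0(V))\neq 0$, whence $N=\langle L_{-1}+L_0+L_1\rangle=L$ by Lemma \ref{1.1}(3) and its analogues.

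The isomorphism statement (b) I would obtain as in Lemma \ref{th:3.4}: a graded algebra isomorphism $\Phi\colon M(L_{-1},M_0(V))\to M(L_{-1},M_0(\widetilde V))$ restricts to a symplectic equivalence of $L_{-1}$ carrying the $M_0(V)$-invariant subspace to an $M_0(\widetilde V)$-invariant one. For isotropic $V$ the only such invariant subspace is $V$, giving $V\stackrel{\mathrm{sspace}}{\cong}\widetilde V$, while for nondegenerate $V$ there are two invariant pieces $V$ and $V^{\bot}$ that $\Phi$ may fix or interchange, yielding the alternative $\dim V=\dim\widetilde V$ or $\dim V+\dim\widetilde V=2n$; the two families stay distinct because an isomorphism preserves the sym-isotropy type of the degree $-1$ invariant subspace. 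Part (c) then follows by counting sym-isotropy classes of standard isotropic subspaces, indexed by $(\dim V_{\bar 0},\dim V_{\bar 1})$ subject to $\mathrm{misdim}V\neq n-1$, together with the nondegenerate classes modulo $V\leftrightarrow V^{\bot}$. For (d) I would compute $\dim M(L_{-1},M_0(V))=\sum_{i\geq -2}\dim M_i(L_{-1},M_0(V))$ piece by piece as in Lemma \ref{4.4}; for nondegenerate $V$ of rank $2k$ the condition forces each homogeneous element to be a function of only the $V$-variables or only the $V^{\bot}$-variables, exhibiting $M(L_{-1},M_0(V))\cong HO(k)\oplus HO(n-k)$ and giving $(2p)^k+(2p)^{n-k}-2$, with the $SHO$ and $KO$ cases following by intersecting with the divergence-free subspace and by adjoining the $z=x_{2n+1}$ terms through Lemmas \ref{th:3.6} and \ref{th3.8}.

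The hard part will be the maximality step, namely verifying that $M_0(V)$ is a maximal subalgebra of $L_0$ for every relevant $V$, and pinning down the two boundary phenomena. The exclusion $\mathrm{misdim}V\neq n-1$ is where $M_1(L_{-1},M_0(V))$ degenerates and the generation $U(L_0)M_1=L_1$ fails, so that $M(L_{-1},M_0(V))$ either ceases to be maximal or collapses into a type $(\mathrm{I})$ subalgebra of Theorem \ref{03.1}. The disappearance of the nondegenerate family for $KO$ and $SKO$ is the genuinely contact feature: since $L_{-2}\neq 0$ and $[V,V]=L_{-2}$ for nondegenerate $V$, the bracket into $L_{-2}$ and the scaling element $z$ enlarge the relevant stabilizer, so the candidate subalgebra is no longer maximal and only the isotropic family survives. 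Treating these degenerate cases uniformly across all four algebras, rather than the routine bookkeeping behind the dimension and counting formulas, is where the real work lies.
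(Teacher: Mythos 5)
Your architecture matches the paper's: reduce a maximal R-subalgebra to $M(L_{-1},A_0)$ with $A_0=M_0(V)$ a reducible maximal subalgebra of $L_0$ (Lemmas \ref{th4.1}--\ref{th:4.4}), prove maximality by combining maximality of $M_0(V)$ in $L_0$ with generation of $L_1$ (Proposition \ref{th:4.5}), and extract (b)--(d) from the explicit graded pieces. But the load-bearing step --- determining exactly for which $V$ the stabilizer $M_0(V)$ is maximal in $L_0$ --- is both deferred and mislocated. The paper proves this directly (Lemma \ref{th:4.6}) by showing $L_0/M_0(V)$ is an irreducible $M_0(V)$-module through weight computations relative to the standard torus; it is not the statement ``reduced to $\mathfrak{p}(n)$'' (that reduction concerns the S-subalgebras, where $M_0$ acts irreducibly on $L_{-1}$). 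Consequently your account of the exclusion $\mathrm{misdim}V\neq n-1$ is wrong: it is not where $U(L_0)M_1=L_1$ fails, but where irreducibility of $L_0/M_0(V)$ breaks down (the paper's argument needs $n-s>1$ to move weight vectors around inside $\overline{V}_2V_3$). Two further claims in your sketch are false as stated, though repairable: (i) a subspace that is neither isotropic nor nondegenerate need not have a non-maximal stabilizer --- the paper's class $\mathfrak{V}^{L}_{\mathfrak{ne}}$ consists of exactly such $V$ with $M_0(V)$ maximal, merely coinciding with $M_0(V')$ for an isotropic $V'$; and (ii) for isotropic $V$ the stabilizer $M_0(V)=VL_{-1}\oplus V_3^2$ also leaves $V\oplus V_3$ invariant, so ``the only invariant subspace is $V$'' fails, and for $HO/SHO$ one cannot argue that a graded algebra isomorphism induces a symplectic equivalence of $L_{-1}$ via the bracket, since $[L_{-1},L_{-1}]=0$ there; the paper instead separates the isomorphism classes by dimension counting.

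For $KO/SKO$ the disappearance of the nondegenerate family is likewise not established by your remark that $[V,V]=L_{-2}$ enlarges the stabilizer. The paper's Lemma \ref{th4.11} computes, for nondegenerate $V$, that $M_i(L_{-1},A_0)=V^{i+2}\oplus V_3^{i+2}$ for $i\geq 1$ --- in particular no element involving $z=x_{2n+1}$ survives in positive degree --- so that $M(L_{-1},A_0)$ is strictly contained in the type (\MyRoman{1}) MGS $L_{-2}+L_{-1}+L_{0}+L_{1,0}+\cdots$ of Theorem \ref{03.1}(3). That explicit containment in a type (\MyRoman{1}) subalgebra, not a vague enlargement of the stabilizer, is what kills maximality; the corresponding positive statement for isotropic $V$ requires exhibiting elements of $M(L_{-1},A_0)$ in both $L_{1,1}$ and $L_{1,0}\setminus L_{1,0}'$, which you do not address. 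Until you supply the irreducibility of $L_0/M_0(V)$ for $V\in\mathfrak{V}^{L}_{\mathfrak{n}}\cup\mathfrak{V}^{L}_{\mathfrak{i}}\cup\mathfrak{V}^{L}_{\mathfrak{ne}}$, its failure for $\mathrm{misdim}V=n-1$, and the $z$-free description of the positive part in the contact cases, part (a) of both halves of the theorem remains unproved.
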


\begin{lemma}\label{th4.1}
Let $M$ be an MGS of type (\MyRoman{3}) for $L$.
Then $M_{0}$ is a maximal subalgebra of $L_{0}$.

\end{lemma}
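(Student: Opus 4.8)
The plan is to translate the maximality of $M$ as a graded subalgebra of $L$ into the maximality of $M_0$ inside $L_0$, using the functorial construction $A_0\mapsto M(L_{-1},A_0)$ introduced above. First I would fix an arbitrary subalgebra $A_0$ of $L_0$ with $M_0\subseteq A_0\subseteq L_0$, and aim to prove that necessarily $A_0=M_0$ or $A_0=L_0$; this is exactly the assertion that $M_0$ is a maximal subalgebra of $L_0$ (note $M_0\neq L_0$ since $M$ has type (\MyRoman{3})). The tool is the $\mathbb{Z}$-graded subalgebra $M(L_{-1},A_0)=\oplus_{i\geq-2}M_i(L_{-1},A_0)$, whose degree $-1$ and degree $0$ components are $L_{-1}$ and $A_0$ by construction.

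The central step is to verify the inclusion $M\subseteq M(L_{-1},A_0)$. Since $M$ has type (\MyRoman{3}) we have $M_{-1}=L_{-1}$, hence $M_{-2}=[M_{-1},M_{-1}]=[L_{-1},L_{-1}]=L_{-2}$ (the bracket onto $L_{-2}$ being surjective by nondegeneracy of the form), so $M$ and $M(L_{-1},A_0)$ agree in all negative degrees, while in degree $0$ we have $M_0\subseteq A_0=M_0(L_{-1},A_0)$. For $i\geq 1$ I would induct on $i$: given $u\in M_i$, the graded-subalgebra property of $M$ together with $M_{-1}=L_{-1}$ yields $[u,L_{-1}]=[u,M_{-1}]\subseteq M_{i-1}\subseteq M_{i-1}(L_{-1},A_0)$, which is precisely the defining condition for membership in $M_i(L_{-1},A_0)$. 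Hence $M_i\subseteq M_i(L_{-1},A_0)$ for all $i$, and therefore $M\subseteq M(L_{-1},A_0)$.

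Now suppose $A_0\neq M_0$. Then $M(L_{-1},A_0)$ strictly contains $M$, since already their degree $0$ parts $A_0\supsetneq M_0$ differ. As $M(L_{-1},A_0)$ is a $\mathbb{Z}$-graded subalgebra of $L$ properly containing the MGS $M$, the maximality of $M$ forces $M(L_{-1},A_0)=L$; comparing degree $0$ components then gives $A_0=M_0(L_{-1},A_0)=L_0$. Thus any intermediate subalgebra $A_0$ equals $M_0$ or $L_0$, so $M_0$ is maximal in $L_0$.

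I expect the only real subtlety, rather than a genuine obstacle, to be confirming that $M(L_{-1},A_0)$ really is a subalgebra for every such $A_0$, in particular that $[M_2(L_{-1},A_0),L_{-2}]\subseteq A_0$ in the contact cases $KO$ and $SKO$; this follows from $L_{-2}=[L_{-1},L_{-1}]$ and the Jacobi identity, and in any event is already granted by the construction preceding the theorem. Everything else is a formal consequence of the monotonicity of $A_0\mapsto M(L_{-1},A_0)$ and the definition of an MGS.
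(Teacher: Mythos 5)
Your proof is correct and follows essentially the same route as the paper: both take an intermediate subalgebra $A_0$ strictly between $M_0$ and $L_0$ (the paper calls it $\overline{M}_0$), form its prolongation $\oplus_i M_i(L_{-1},A_0)$, show by induction that it contains $M$, and invoke the maximality of $M$ (the paper phrases this as a contradiction, you phrase it contrapositively, but the content is identical).
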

\begin{proof}
Suppose $M_{0}$ is not a maximal subalgebra of $L_{0}$,
and $\overline{M}_{0}$ is a proper subalgebra of $L_{0}$,
which contains $M_{0}$ properly.
For $i\in \mathbb{N}$, define
\begin{equation*}
\overline{M}_{i}=\{u\in L_{i}\mid [u,L_{-1}]\subset \overline{M}_{i-1}\}.
\end{equation*}
Using induction on $i$,
we can see that $\overline{M}_{i}$ is an $\overline{M}_{0}$-module,
and $M_{i}\subset \overline{M}_{i}$.
So $L_{-2}+L_{-1}+\overline{M}_{0}+\overline{M}_{1}+\overline{M}_{2}+\cdots$
is a proper subalgebra of $L$ containing $M$ properly,
which contradicts to the maximality of $M$.
This shows that $M_{0}$ must be a maximal subalgebra of $L_{0}$.
\end{proof}
\begin{lemma}\label{th:4.2}
If $M$ is an MGS of type  (\MyRoman{3}) for $L$,
then there exists a maximal subalgebra $A_{0}$  of $L_{0}$
such that $M=M(L_{-1}, A_{0})$.
\end{lemma}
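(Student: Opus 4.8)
The plan is to take $A_0=M_0$ and show directly that $M=M(L_{-1},M_0)$; the fact that this $A_0$ is a \emph{maximal} subalgebra of $L_0$ is then exactly the content of Lemma \ref{th4.1}, so no extra work is needed for that part of the conclusion. First I would pin down the negative and zeroth graded pieces. Since $M$ is of type (\MyRoman{3}) we have $M_{-1}=L_{-1}$, and because $M$ is a graded subalgebra the relation $M_{-2}=[M_{-1},M_{-1}]=[L_{-1},L_{-1}]=L_{-2}$ holds (the last equality by non-degeneracy of the form on $L_{-1}$; for $HO$ and $SHO$ there is simply no $L_{-2}$). Consequently $M_i=L_i=M_i(L_{-1},M_0)$ for every $i<0$, and $M_0=M_0(L_{-1},M_0)$ holds by the very definition of $M(L_{-1},A_0)$ once we set $A_0=M_0$.

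Next I would establish the inclusion $M\subset M(L_{-1},M_0)$ by induction on the degree $i\geq 0$, with the cases $i\leq 0$ already handled above. For the inductive step, fix $i>0$ and take $u\in M_i$. Because $M$ is a graded subalgebra and $M_{-1}=L_{-1}$, we have $[u,L_{-1}]=[u,M_{-1}]\subset M_{i-1}$, and the inductive hypothesis gives $M_{i-1}\subset M_{i-1}(L_{-1},M_0)$. Hence $[u,L_{-1}]\subset M_{i-1}(L_{-1},M_0)$, which is precisely the membership condition defining $M_i(L_{-1},M_0)$, so $u\in M_i(L_{-1},M_0)$. This yields $M_i\subset M_i(L_{-1},M_0)$ for all $i$, and therefore $M\subset M(L_{-1},M_0)$.

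Finally I would close the argument using maximality. The object $M(L_{-1},M_0)$ is a $\mathbb{Z}$-graded subalgebra of $L$ by the construction recorded just before the theorem, and it is a \emph{proper} subalgebra because its zeroth component equals $M_0\neq L_0$. Thus $M\subset M(L_{-1},M_0)\subsetneq L$, and the maximality of the MGS $M$ forces $M=M(L_{-1},M_0)$; taking $A_0=M_0$, which is maximal in $L_0$ by Lemma \ref{th4.1}, completes the proof. I expect the only genuinely delicate point to be the assertion that $M(L_{-1},A_0)$ is a subalgebra at all, but this is already guaranteed by the construction preceding the statement, and properness is immediate from its degree-zero piece, so the remaining steps are routine.
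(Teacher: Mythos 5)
Your proposal is correct and follows essentially the same route as the paper: take $A_{0}=M_{0}$ (maximal in $L_{0}$ by Lemma \ref{th4.1}), show $M_{i}\subset M_{i}(L_{-1},A_{0})$ by induction using $[M_{i},L_{-1}]\subset M_{i-1}$, and conclude $M=M(L_{-1},A_{0})$ from the maximality of $M$. The extra details you supply (the negative degrees, properness of $M(L_{-1},M_{0})$) are correct elaborations of what the paper leaves implicit.
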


\begin{proof}
Suppose $M$ is an MGS of type (\MyRoman{3}) for $L$.
Then $M_{0}$ is a maximal subalgebra of $L_{0}$ by Lemma \ref{th4.1}.
Let $A_{0}=M_{0}$.
If we set $M_{i-1}\subset M_{i-1}(L_{-1},A_{0})$ for any $i\geq0$,
then we can get $M_{i}\subset M_{i}(L_{-1},A_{0})$
from the fact $[M_{i}, L_{-1}]\subset M_{i-1}$.
As a result, $M\subset M(L_{-1},A_{0})$.
Consequently, $M=M(L_{-1},A_{0})$ by the maximality of $M$.
\end{proof}
\begin{lemma}\label{th:4.4}
Suppose $A_{0}$ is a  reducible maximal subalgebra of $L_{0}$.
Then there exists a proper subspace $V$ of $L_{-1}$
such that $A_{0}=M_{0}(V)$.
\end{lemma}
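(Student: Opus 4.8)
The plan is to produce the required subspace $V$ directly from the reducibility of $A_0$, and then to force the equality $A_0=M_0(V)$ by combining the maximality of $A_0$ in $L_0$ with the irreducibility of $L_{-1}$ as an $L_0$-module (recorded in the introduction).

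First I would unwind the definition of $M_0(V)$. Since $M_{-1}(V)=V$, for any subspace $V\subseteq L_{-1}$ the definition specializes to $M_0(V)=\{u\in L_0\mid [u,V]\subseteq V\}$; that is, $M_0(V)$ is exactly the stabilizer in $L_0$ of $V$, the largest subalgebra of $L_0$ under which $V$ is a submodule. Because $A_0$ is reducible, $L_{-1}$ fails to be an irreducible $A_0$-module, so there is a proper nontrivial ($\mathbb{Z}_2$-graded) subspace $0\neq V\subsetneq L_{-1}$ which is an $A_0$-submodule. By construction $[A_0,V]\subseteq V$, whence $A_0\subseteq M_0(V)$; and $M_0(V)$ is a subalgebra of $L_0$ (the first lemma of this section).

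Next I would apply the maximality dichotomy. Since $A_0$ is a maximal subalgebra of $L_0$ with $A_0\subseteq M_0(V)\subseteq L_0$, we must have either $M_0(V)=A_0$ or $M_0(V)=L_0$. The latter would give $[L_0,V]\subseteq V$, i.e.\ $V$ would be an $L_0$-submodule of $L_{-1}$; but $L_{-1}$ is irreducible over $L_0$, so a proper nontrivial submodule cannot exist, a contradiction. Hence $M_0(V)=A_0$, proving the statement.

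I expect no serious obstacle: once $M_0(V)$ is identified as the stabilizer of $V$, the argument is purely formal. The only points needing care are that the $A_0$-submodule $V$ may be chosen $\mathbb{Z}_2$-graded (automatic under the paper's standing convention that all submodules are graded) and the invocation of the $L_0$-irreducibility of $L_{-1}$; both are immediate, so the verification should be short.
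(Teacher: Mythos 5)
Your proof is correct and follows essentially the same route as the paper's: pick a nontrivial $A_0$-invariant subspace $V$ of $L_{-1}$, note $A_0\subseteq M_0(V)$, rule out $M_0(V)=L_0$ via the $L_0$-irreducibility of $L_{-1}$, and conclude by maximality of $A_0$. The only difference is that you spell out the maximality dichotomy explicitly, which the paper leaves implicit.
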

\begin{proof}
Since $L_{-1}$ is a reducible $A_{0}$-module,
we can set $V$ to be a nontrivial subspace of $L_{-1}$ which is invariant under $A_{0}$.
So $A_{0}\subset M_{0}(V)$. Notice that $M_0(V)\neq L_0$, since $L_{-1}$ is an irreducible-$L_0$ module.
Thus
we have $A_{0}=M_{0}(V)$.
\end{proof}
\subsection{$HO(n)$ and $SHO(n)$}
Let $L=HO(n)$ or $SHO(n)$.
We will determine the maximal R-subalgebras of type (\MyRoman{3}) for $L$.
\begin{proposition}\label{th:4.5}
Suppose $A_{0}$ is a reducible maximal subalgebra of
$L_{0}$. Then $M(L_{-1},A_{0})$ is an MGS of $L$.
\end{proposition}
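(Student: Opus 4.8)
The plan is to reduce the maximality of $M(L_{-1},A_0)$ to two inputs: that $A_0$ is maximal in $L_0$, and that the degree-one piece of $M(L_{-1},A_0)$ is large enough to generate $L_1$ over $L_0$. First I would use Lemma \ref{th:4.4} to fix a proper subspace $V$ of $L_{-1}$ with $A_0=M_0(V)$, and by Lemma \ref{th:1.3} I may assume $V$ is standard (an automorphism carries $M(L_{-1},A_0)$ to $M(L_{-1},M_0(\widetilde V))$ and preserves maximality), so that $M_0(V)=V_1^2\oplus V_2L_{-1}\oplus V_3^2$ is given explicitly by Lemma \ref{4.4}. Since $M(L_{-1},A_0)$ is a graded subalgebra whose degree-zero part is $A_0\neq L_0$, it is proper; to show it is an MGS I would take an arbitrary graded subalgebra $\overline M$ with $M(L_{-1},A_0)\subsetneq\overline M\subseteq L$ and prove $\overline M=L$.

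Next I would exploit the gradation in low degrees. As $\overline M\supseteq M(L_{-1},A_0)$ we have $\overline M_{-1}=L_{-1}$ and $\overline M_0\supseteq A_0$, so by maximality of $A_0$ in $L_0$ either $\overline M_0=A_0$ or $\overline M_0=L_0$. In the first case I argue by induction on $i$: if $\overline M_j=M_j(L_{-1},A_0)$ for all $j<i$, then every $u\in\overline M_i$ satisfies $[u,L_{-1}]\subseteq\overline M_{i-1}=M_{i-1}(L_{-1},A_0)$, whence $u\in M_i(L_{-1},A_0)$; thus $\overline M=M(L_{-1},A_0)$, contradicting the strict containment. Therefore $\overline M_0=L_0$, so $\overline M$ contains $L_{-1}+L_0$.

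Finally I would promote $\overline M_0=L_0$ to $\overline M=L$. Here $\overline M_1$ is an $L_0$-submodule of $L_1$ containing $M_1(L_{-1},A_0)$. For $L=SHO(n)$, $L_1$ is an irreducible $L_0$-module by Lemma \ref{3.6}, so it suffices to check $M_1(L_{-1},A_0)\neq0$; for $L=HO(n)$, Lemma \ref{th:2.1} shows $SHO_1'$ is the only proper nonzero $L_0$-submodule of $L_1$ and that $U(L_0)D=L_1$ for any $D\in L_1\setminus SHO_1'$, so it suffices to produce an element of $M_1(L_{-1},A_0)$ lying outside $SHO_1'$. In either case $\overline M_1=L_1$, and then transitivity together with $L=\langle L_{-1}+L_0+L_1\rangle$ (Lemmas \ref{1.1} and \ref{1.2}) forces $L_i\subseteq\overline M$ for all $i$, i.e. $\overline M=L$. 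Equivalently, a proper $\overline M$ with $\overline M_{-1}=L_{-1}$ and $\overline M_0=L_0$ is a type $(\mathrm{I})$ subalgebra, hence contained in a type $(\mathrm{I})$ MGS of Theorem \ref{03.1}, whose degree-one part is $SHO_1'$ for $HO$ and $0$ for $SHO$; the same degree-one computation rules this out.

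I expect the main obstacle to be precisely this degree-one computation: extracting from the explicit form of $A_0=M_0(V)$ an element $D\in M_1(L_{-1},A_0)$ with $\Delta(D)\neq0$ (so $D\notin SHO_1'$), respectively $D\neq0$ in the $SHO$ case. This requires working in the standard basis with the subspaces $V_1,V_2,\overline V_2,V_3$ associated with $V$, reducing to weight vectors via Lemma \ref{th3.2}, and verifying the condition $[D,L_{-1}]\subseteq A_0$ weight by weight; it is here that the hypothesis $\mathrm{misdim}V\neq n-1$ and the distinction between the nondegenerate and isotropic cases must be used to guarantee such a $D$ exists.
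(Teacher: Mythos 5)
Your architecture is sound and is essentially the paper's: reduce to $A_0=M_0(V)$ with $V$ standard (Lemmas \ref{th:4.4} and \ref{th:1.3}), use maximality of $A_0$ to force $\overline M_0=L_0$, then use Lemmas \ref{th:2.1} and \ref{3.6} to promote a single suitable element of degree one to all of $L_1$, and finish by $L=\langle L_{-1}+L_0+L_1\rangle$. Your dichotomy argument for $\overline M_0$ (either $\overline M_0=A_0$, which by the defining closure property of $M(L_{-1},A_0)$ forces $\overline M=M(L_{-1},A_0)$, or $\overline M_0=L_0$) is a clean packaging of what the paper does by bracketing an element of $L\setminus M(L_{-1},A_0)$ down into $L_0\setminus A_0$.

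The genuine gap is the step you yourself flag and defer: you never exhibit an element of $M_1(L_{-1},A_0)$ lying outside $SHO_1'$ (for $HO$), respectively a nonzero element (for $SHO$). This is the entire substantive content of the proof beyond formalities, and the paper supplies it explicitly: if $\mathbf{I}_{v_1}\neq\emptyset$ take $x^{(2\varepsilon_i)}x_{i'}$ with $i\in\mathbf{I}_{v_1}$, and otherwise take $x_ix_jx_{j'}$ with $i\in\mathbf{I}_{v_2}$ and $j\in\mathbf{I}_{v_2}\cup\mathbf{I}_{v_3}$, $j\neq i'$; one checks directly from $M_0(V)=V_1^2\oplus V_2L_{-1}\oplus V_3^2$ that bracketing these with $L_{-1}$ lands in $A_0$, and both have nonzero image under $\Delta$, so Lemma \ref{th:2.1}(3) applies. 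Note also that your expectation that the hypothesis $\mathrm{misdim}V\neq n-1$ and the isotropic/nondegenerate dichotomy must enter at this point is misplaced: the degree-one element above exists for \emph{every} nontrivial $V$, since $V$ nontrivial forces $\mathbf{I}_{v_1}\cup\mathbf{I}_{v_2}\neq\emptyset$ and $|\mathbf{I}_{v_2}\cup\mathbf{I}_{v_3}|\geq 2$. The condition on $\mathrm{misdim}V$ is relevant only to whether $M_0(V)$ is actually maximal in $L_0$ (Lemma \ref{th:4.6}), which in this proposition is a hypothesis, not something to be verified. Once you insert the explicit degree-one element and its verification, your proof closes and coincides with the paper's.
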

\begin{proof}
For any
$$
0\neq D \in L\backslash M(L_{-1},A_{0}),
$$
there exist $u_{1},\ldots,u_{l}\in L_{-1}$
such that
\begin{equation*}
   0\neq[u_{l},\ldots,[u_{1}, D]\ldots]=D'\in L_{0}\backslash A_{0}.
\end{equation*}
Since $A_{0}$ is maximal in $L_{0}$, we have
\begin{equation*}
 L_{0}\subset \langle{M_{0}(L_{-1},A_{0}), D'\rangle}\subset \langle {M(L_{-1},A_{0}),D\rangle}.
\end{equation*}
As $A_0$ is a reducible subalgebra of $L_{0}$,
there exists $V\subset L_{-1}$
such that $A_{0}=M_{0}(V)$, by Lemma \ref{th:4.4}.
We may assume that $V$ is a standard subspace of $L_{-1}$.
Suppose $V_1,V_2,\overline{V}_2$ and $V_3$ are subspaces of $L_{-1}$
associated with $V$ which are defined as in \eqref{4.2}, and $\mathbf{I}_{v_1},\mathbf{I}_{v_2}, \mathbf{I}_{\overline{v}_2}$ and
$\mathbf{I}_{v_3}$ are corresponding index sets.
Obviously, $M_{1}(L_{-1},A_{0})\neq 0$.
We assert that  $L_{1}\subset\langle M(L_{-1}, A_{0}), D\rangle$.
This is because that
$SHO_1$ is an irreducible $SHO_0$-module;
and that
for $L=HO$,
if $\mathbf{I}_{v_1}\neq \emptyset$,
then there exists $ i\in \mathbf{I}_{v_1} $ such that
$x^{(2\varepsilon_{i})}x_{i'}\in M_{1}(L_{-1},A_{0});$
if $\mathbf{I}_{v_1}\neq \emptyset$,
there exist $i\in \mathbf{I}_{v_2}$ and $i'\neq j\in\mathbf{I}_{v_2}\cup\mathbf{I}_{v_3}$  such that
$x_ix_jx_{j'}\in M_{1}(L_{-1},A_{0})$. By Lemma \ref{th:2.1}, we have $L_{1}\subset\langle M(L_{-1}, A_{0}), D\rangle$.
Thus $\langle M(L_{-1}, A_{0}), D\rangle=L$.
This completes the proof.
\end{proof}
As for $\mathfrak{V}^{L}_{\mathfrak{n}}$ and $\mathfrak{V}^{L}_{\mathfrak{i}}$, we define
\begin{align}
\mathfrak{V}^{L}_{\mathfrak{ne}}=&\{V\in\mathcal{V}^L\setminus \{\mathfrak{V}^{L}_{\mathfrak{n}}\cup\mathfrak{V}^{L}_{\mathfrak{i}}\}\mid V \mbox{  has  an $n$-dimensional }\nonumber\\
& \mbox{ isotropic subspace  and  $\mathrm{R}(V) \neq 2$}\}.\nonumber
\end{align}

\begin{lemma}\label{th:4.6}
Let $V$ be a subspace of $L_{-1}$.
Then $M_0(V)$ is a maximal subalgebra of $L_{0}$ if and only if $V\in \mathfrak{V}^{L}_{\mathfrak{n}}\cup\mathfrak{V}^{L}_{\mathfrak{i}}\cup\mathfrak{V}^{L}_{\mathfrak{ne}}$.
\end{lemma}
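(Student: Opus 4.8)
The plan is to fix $L=HO(n)$ throughout and recover $SHO(n)$ only at the very end, using the codimension-one relation $HO_0=SHO_0\oplus\mathbb{F}x_1x_{1'}$ exactly as in the proof of Lemma \ref{th:3.3}. By Lemma \ref{th:1.3} I may assume $V$ is standard, with invariants $k=\mathrm{R}(V)/2$, $s=\mathrm{misdim}V$ and $t=\dim V_{\bar 0}-k$. Then $M_0(V)$ is precisely the stabiliser of $V$ in $L_0$, and it contains the weight-zero part of $L_0$ (the zero-weight elements preserve every one-dimensional weight space of $L_{-1}$, hence every standard subspace), in particular $T\subseteq M_0(V)$. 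Consequently any subalgebra $A_0$ with $M_0(V)\subseteq A_0\subseteq L_0$ is $\mathrm{ad}\,T$-invariant, hence a sum of weight spaces; by Lemma \ref{2.2} each nonzero weight space of $L_0$ is one-dimensional, spanned by a root vector of weight $\epsilon_i+\epsilon_j$. The question thus becomes combinatorial: decide which root vectors lie in $M_0(V)$, and whether adjoining one root vector outside $M_0(V)$ forces $A_0=L_0$.

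First I would record a \emph{radical reduction}. Writing $\mathrm{rad}V=V\cap V^{\bot}$, stabilising a subspace is equivalent to stabilising its perp, so $M_0(V)\subseteq M_0(\mathrm{rad}V)$ always; moreover $\mathrm{misdim}V=n$ is equivalent to $\mathrm{rad}V=V^{\bot}$, in which case $M_0(V)=M_0(V^{\bot})=M_0(\mathrm{rad}V)$. This splits the degenerate non-isotropic case ($k\geq 1$, $s>k$) in two. If $s<n$, a single root-vector exhibits the strict inclusion $M_0(V)\subsetneq M_0(\mathrm{rad}V)$, and since $\mathrm{rad}V$ is a nontrivial isotropic subspace while $L_{-1}$ is $L_0$-irreducible we obtain $M_0(V)\subsetneq M_0(\mathrm{rad}V)\subsetneq L_0$, so $M_0(V)$ is not maximal. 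If $s=n$, then $M_0(V)=M_0(\mathrm{rad}V)$ with $\mathrm{rad}V$ isotropic of dimension $n-k$, which lies in $\mathfrak{V}^{L}_{\mathfrak{i}}$ exactly when $n-k\neq n-1$, i.e. $\mathrm{R}(V)\neq 2$. In this way the degenerate non-isotropic case collapses onto the isotropic and nondegenerate base cases, and the class $\mathfrak{V}^{L}_{\mathfrak{ne}}$ emerges precisely as those $V$ with $s=n$ and $\mathrm{R}(V)\neq 2$ whose stabiliser coincides with that of a good isotropic radical.

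It then remains to settle the two base cases for standard $V$: nondegenerate ($s=k$) and isotropic ($k=0$). For the sufficiency, assuming $\mathrm{misdim}V\neq n-1$, I take $A_0\supsetneq M_0(V)$, choose a root vector $u$ of weight $\epsilon_i+\epsilon_j$ in $A_0\setminus M_0(V)$ (one that moves a basis vector of $V$ outside $V$), and generate: bracketing $u$ against the root vectors already contained in $M_0(V)\subseteq A_0$ produces, step by step via the weight data of Lemma \ref{2.2} and the generation principle of Lemma \ref{th3.2}, every root vector of $L_0$, whence $A_0=L_0$ and $M_0(V)$ is maximal. For the necessity I treat the excluded value $\mathrm{misdim}V=n-1$ (the isotropic $V$ of dimension $n-1$ and the nondegenerate $V$ of rank $2(n-1)$): here I exhibit one specific root vector $u\notin M_0(V)$ for which $M_0(V)\oplus\mathbb{F}u$ is closed under the bracket and still proper, giving a genuine intermediate subalgebra and showing $M_0(V)$ is not maximal.

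The main obstacle is this generation/obstruction dichotomy in the base cases: one must verify that a single bad root vector together with the root vectors of $M_0(V)$ generates all of $L_0$, and—crucially—that the propagation fails exactly at $\mathrm{misdim}V=n-1$, where instead a proper intermediate subalgebra survives. Tracking which weights $\epsilon_i+\epsilon_j$ occur, the parity constraints forced by the odd form, and the small-rank coincidence near $s=n-1$ is the delicate part; the bracket relations of Lemma \ref{2.2} and Lemma \ref{th3.2} are the tools I would use to push it through, and the $SHO$ statement then follows verbatim after restricting along $HO_0=SHO_0\oplus\mathbb{F}x_1x_{1'}$.
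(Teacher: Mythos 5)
Your skeleton --- reduce to a standard $V$ via Lemma \ref{th:1.3}, use $T\subseteq M_0(V)$ together with Lemmas \ref{2.2} and \ref{th3.2} to turn everything into a question about which root vectors $x_ix_j$ an intermediate subalgebra contains, then prove sufficiency by generation and necessity by exhibiting an intermediate subalgebra --- is the same as the paper's, which establishes sufficiency by showing $L_0/M_0(V)$ is an irreducible $M_0(V)$-module and necessity by adjoining a single $x_ix_j$ with $(i,j)\in\mathbf{I}_{v_1}\times\mathbf{I}_{v_3}$. Your radical reduction ($M_0(V)=M_0(V^{\bot})$, hence $M_0(V)\subseteq M_0(\mathrm{rad}\,V)$ with equality exactly when $\mathrm{misdim}V=n$) is a genuinely different and more structural way to dispose of the degenerate non-isotropic case than the paper's direct case analysis, and it does explain cleanly where $\mathfrak{V}^{L}_{\mathfrak{ne}}$ comes from.

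However, that same symmetry breaks your treatment of the nondegenerate base case, and this is a genuine gap rather than a computation to be ``pushed through''. If $V$ is nondegenerate of rank $2$, then $V^{\bot}$ is nondegenerate of rank $2(n-1)$ and, as you yourself note, $M_0(V)=M_0(V^{\bot})$; yet your plan claims to prove that $M_0(V)$ is maximal (since $\mathrm{misdim}V=1\neq n-1$) \emph{and} that $M_0(V^{\bot})$ is not (since $\mathrm{misdim}V^{\bot}=n-1$). Both cannot succeed. In fact the generation step is what fails, at rank $2$: for $V=\mathrm{span}_{\mathbb{F}}\{x_1\mid x_{1'}\}$ one has $V^2=\mathrm{span}_{\mathbb{F}}\{x^{(2\varepsilon_1)},x_1x_{1'}\}$ (there is no $x_{1'}^2$), so $M_0(V)$ preserves the line $\mathbb{F}x_1$ inside $V$, and a direct check gives $[x_1x_i,x_1x_j]=\pm 2\delta_{i'j}x^{(2\varepsilon_1)}$ for $i,j$ indexing $V^{\bot}$; hence $M_0(V)\oplus x_1V^{\bot}$ is a subalgebra lying strictly between $M_0(V)$ and $L_0$. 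The nondegenerate exceptional set is therefore symmetric, $\mathrm{R}(V)\in\{2,\,2(n-1)\}$, not the asymmetric set your argument (and the lemma, read literally) asserts --- note that the paper itself quietly restricts to $1<r<n-1$ when counting the nondegenerate conjugacy classes in the proof of Theorem \ref{0.003}(1). As written, the ``delicate part'' you defer is precisely where this breaks; you must either record the extra exclusion at rank $2$ or show concretely why your propagation of root vectors does not get stuck on the invariant line $\mathbb{F}x_1$.
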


\begin{proof}
Suppose $L=HO(n)$.
We may assume that $V$ is a standard subspace of $L_{-1}$.
Suppose $V_1,V_2,\overline{V}_2$ and $V_3$ are subspaces of $L_{-1}$
associated with $V$ which are defined as in \eqref{4.2},  and $\mathbf{I}_{v_1},\mathbf{I}_{v_2}, \mathbf{I}_{\overline{v}_2}$ and
$\mathbf{I}_{v_3}$ are corresponding index sets.
First we consider the sufficiency.
We need to show that $M_0(V)$ is a maximal subalgebra of $L_0$, for any $V\in \mathfrak{V}^{L}_{\mathfrak{n}}\cup\mathfrak{V}^{L}_{\mathfrak{i}}\cup\mathfrak{V}^{L}_{\mathfrak{ne}}$.
If $V\in \mathfrak{V}^{L}_{\mathfrak{i}}$,
then $L_{-1}=V_2\oplus \overline{V}_2\oplus V_3$, where $V_2=V$.
 By Lemma \ref{4.4}, we have $M_0(V)=VL_{-1}\oplus V_3^2$ and $L/M_0(V)\cong \overline{V}_2^2\oplus \overline{V}_2V_3$.
Assert that  $\overline{V}_2^2\oplus \overline{V}_2V_3$ is an irreducible $M_0(V)$-module.
In fact, suppose $N$ is a nonzero $M_0(V)$-submodule of $\overline{V}_2^2\oplus \overline{V}_2V_3$.
Since the standard torus $T\subset M_0(V)$, there is $0\neq x_ix_j\in N$, by Lemma \ref{th3.2}.
If $(i,j)\in \mathbf{I}_{\overline{v}_2}\times\mathbf{I}_{\overline{v}_2}$, then for any $(q,l)\in \mathbf{I}_{\overline{v}_2}\times\mathbf{I}_{\overline{v}_2}\cup\mathbf{I}_{\overline{v}_2}\times\mathbf{I}_{v_3}$,
we have
\begin{align}
L^{\epsilon_q+\epsilon_l}&\subset[L^{\epsilon_i+\epsilon_j}, L^{\epsilon_{j'}+\epsilon_l}]\subset N, &~\mbox{for}~ q=i;\nonumber\\
L^{\epsilon_q+\epsilon_l}&\subset[[L^{\epsilon_q+\epsilon_{j'}},L^{\epsilon_i+\epsilon_j}],L^{\epsilon_{i'}+\epsilon_l}]\subset N, &~\mbox{for}~ q\neq i.\nonumber
\end{align}
Thus, $N=\overline{V}_2^2\oplus \overline{V}_2V_3$.
If $(i,j)\in \mathbf{I}_{\overline{v}_2}\times \mathbf{I}_{v_3}$, we may  assume that  $t\in\{(s+1)',\ldots, n'\}$,
since $n-s>1$.
Then $L^{\epsilon_i+\epsilon_{j'}}=[L^{\epsilon_i+\epsilon_j}, L^{\epsilon_{j'}+\epsilon_{j'}}]\in N$.
For $l\in\mathbf{I}_{\overline{v}_2}\cup\mathbf{I}_{v_3}\backslash\{j,j'\}$,  we have
$$
L^{\epsilon_i+\epsilon_l}\subset[L^{\epsilon_i+\epsilon_{j'}},[L^{\epsilon_i+\epsilon_{j}}, L^{\epsilon_{i'}+\epsilon_l} ]]\subset N.
$$
Furthermore,
for $(q,l)\in\mathbf{I}_{\overline{v}_2}\times\mathbf{I}_{\overline{v}_2}\cup\mathbf{I}_{\overline{v}_2}\times\mathbf{I}_{v_3}$,
we have
$$L^{\epsilon_q+\epsilon_l}\subset[L^{\epsilon_i+\epsilon_l}, L^{\epsilon_q+\epsilon_{i'}}]\subset N.$$
Thus, $N=\overline{V}_2^2\oplus \overline{V}_2V_3$.
So $M_0(V)$ is a maximal subalgebra of $L_0$.
Similarly, for $V\in \mathfrak{V}_{\mathfrak{n}}^{L}\cup \mathfrak{V}_{\mathfrak{ne}}^{L}$, the fact that $L/M_0(V)$ is an irreducible $M_0(V)$-module implies $M_0(V)$
is a maximal subalgebra of $L_0$.
For $L=SHO$, the lemma can be shown in a similar way, since $M^{SHO}_0(V)\oplus\mathbb{F}x_1x_{1'}=M^{HO}_0(V)$.

Next we consider the necessity. We need to show that $M_0(V)$ is not a maximal subalgebra of $L_0$
if $V\not\in \mathfrak{V}_{\mathfrak{n}}^{L}\cup\mathfrak{V}_{\mathfrak{i}}^L\cup \mathfrak{V}_{\mathfrak{ne}}^{L}$.
By Lemma \ref{th:3.4}, we may assume that $V$ is a standard subspace of $L_{-1}$.
Suppose $V_1,V_2,\overline{V}_2$ and $V_3$ are subspaces of $L_{-1}$
associated with $V$ which are defined as in \eqref{4.2},  and $\mathbf{I}_{v_1},\mathbf{I}_{v_2}, \mathbf{I}_{\overline{v}_2}$ and
$\mathbf{I}_{v_3}$ are corresponding index sets.
For $\mathbf{I}_{v_1}\neq \phi $, $\mathbf{I}_{v_2}\neq \phi $ and $\mathbf{I}_{v_3}\neq \phi$,
there exists $(i,j)\in \mathbf{I}_{v_1}\times \mathbf{I}_{v_3}$ such that
$M_0(V)\subsetneq\langle M_0(V), x_ix_j\rangle$. However $\langle M_0(V), x_ix_j\rangle$ does not contain $\{x_qx_l\in L_0\mid (q,l)\in \mathbf{I}_{v_1}\times\mathbf{I}_{\overline{v}_2}\}$.
So $M_0(V)$ is not a maximal subalgebra of $L_0$. Similarly, for the remaining cases, we can show that $M_0(V)$ is not a maximal subalgebra of $L_0$.
\end{proof}

\begin{proof}[\textit{Proof of Theorem \ref{0.003}(1)}]
Let $L=HO(n)$.
Suppose $M$ is a maximal R-subalgebra of type (\MyRoman{3}) for $L$.
It is seen from Lemma \ref{th:4.4} that
there exists $V\subset L_{-1}$ such that $M_0=M_0(V)$.
According to Lemma \ref{th:4.6},
we can see that  $V\in \mathfrak{V}^{L}_{\mathfrak{n}}\cup\mathfrak{V}^{L}_{\mathfrak{i}}\cup\mathfrak{V}^{L}_{\mathfrak{ne}}$.
First we consider $V\in\mathfrak{V}^{L}_{\mathfrak{n}}$. We may assume that $V$ is a standard subspace of $L_{-1}$.
Suppose $V_1$ and $V_3$ are subspaces of $L_{-1}$
associated with $V$ which are defined as in \eqref{4.2},  and $\mathbf{I}_{v_1}$,
$\mathbf{I}_{v_3}$ are corresponding index sets.
In this case, $L_{-1}=V_1\oplus V_3$, where $V_1=V$.
It is easy to show that $M_i({L_{-1}, M_0(V)})=V^{i+2}\oplus V_3^{i+2}$ by induction.
Furthermore,
$$M({L_{-1}, M_0(V)})=\Bigg(\bigoplus_{i>0} V^{i}\Bigg)\oplus\Bigg(\bigoplus_{j>0} V_3^{j}\Bigg).$$
Suppose $R(V)=2k$.
We have
$$
\dim M({L_{-1}, M_{0}(V)})=(2p)^k+(2p)^{n-k}-2.
$$
For $1< r< n-1$,
set
$$
V_r=\mathrm{span_{\mathbb{F}}}\{x_j\in L_{-1} \mid j\in\overline{1,r}\cup\overline{1',r'}\}.
$$
According to Lemma \ref{th:1.3}, $M({L_{-1}, M_0(V_r)})\cong M({L_{-1}, M_0(V_{n-r}}))$.
Moreover,
we can see that
there are $
\lfloor(n-2)/2\rfloor
$ pairwise maximal R-subalgebras up to isomorphism.

Next we consider $V\in \mathfrak{V}^{L}_{\mathfrak{i}}\cup\mathfrak{V}^{L}_{\mathfrak{ne}}$.
Notice that if $V\in \mathfrak{V}^{L}_{\mathfrak{ne}}$, then there exists $V'\in  \mathfrak{V}^{L}_{\mathfrak{i}}$ such that
$M_0(V)=M_0(V')$. Thus
we  may assume that $V\in \mathfrak{V}^{L}_{\mathfrak{i}}$ is a standard subspace of $L_{-1}$.
Suppose $V_2,\overline{V}_2$ and $V_3$ are subspaces of $L_{-1}$
associated with $V$ which are defined as in \eqref{4.2},  and $\mathbf{I}_{v_2}, \mathbf{I}_{\overline{v}_2}$ and
$\mathbf{I}_{v_3}$ are corresponding index sets.
Obviously, $M_0(V)=VL_{-1}\oplus V_3^2$.
Furthermore, by induction we have
$$
M(L_{-1}, M_0(V))=\Bigg(\bigoplus_{i\geq 0,j\geq0}V^iV_3^j\Bigg)\oplus\Bigg(\bigoplus_{l\geq0}V^l\overline{V}_2\Bigg).
$$
Moreover
$$
\dim M(L_{-1}, M_0(V))=p^{n-s+t}2^{n-t}+p^{t}2^{s-t}s-1,
$$
where
$\mathrm{misdim}V=s$
and $\dim V_0=t$.
Computing  the dimension of $M_0(V)$, we can see that in this case,
$$
M(L_{-1}, M_0(V))\stackrel{\mathrm{alg}}{\cong} M(L_{-1}, M_0(\widetilde{V}) )\Longleftrightarrow
V\stackrel{\mathrm{sspace }}{\cong} \widetilde{V}.
$$
Thus, there are $(n^2+n)/2$ pairwise maximal R-subalgebras up to isomorphism.
Note that
$$M(SHO_{-1},M_0(V))=M(HO_{-1},M_0(V))\cap SHO,$$
and
$M(SHO_{-1},M_0(V))$ is maximal in $SHO$ if and only if $M(HO_{-1},M_0(V))$ is maximal in $HO$.
So to finish the theorem, we only need to compute the dimension of $M(SHO_{-1},M_0(V))$.
Suppose $\mathrm{R}(V)=2k$,
$\mathrm{misdim}V=s$
and $\dim V_0-k=t$.

If $V\in \mathfrak{V}_{\mathfrak{n}}^{SHO}$,
then
\begin{align}
&\dim M(SHO_{-1}, M_0(V))\nonumber\\
=&\dim M(HO_{-1}, M_0(V))\cap SHO\nonumber\\
=&\dim SHO'(k)+\dim SHO'(n-k)\nonumber\\
=&\sum^{k}_{l=2}\bigg((2^{k-1}-2^{k-l})\sum_{(i_1,\ldots,i_l)\in\mathbf{J}(l,k)}\prod_{c=1}^l\pi_{i_c}\bigg)+(p+1)^k\nonumber\\
&+\sum^{n-k}_{m=2}\bigg((2^{n-k-1}-2^{n-k-m})\sum_{(j_1,\ldots,j_m)\in\mathbf{J}(m,n-k)}\prod_{c=1}^m\pi_{j_c}\bigg)+(p+1)^{n-k}-2.\nonumber
\end{align}

If $V\in \mathfrak{V}_{\mathfrak{i}}$,
then
\begin{align}
&\dim M(SHO_{-1}, M_0(V))\nonumber\\
=&\dim M(HO_{-1}, M_0(V))\cap SHO\nonumber\\
=&\dim \bigg(\bigoplus_{i>0} V_3^i\bigg)\oplus\Bigg(\bigg(\bigoplus_{j>0,q\geq0}V^jV_3^q\bigg)\cap SHO\Bigg)+ \dim \Bigg(\bigg(\bigoplus_{l\geq0}V^l\overline{V}_2\bigg)\cap SHO\Bigg)\nonumber\\
=&p^t2^{n-s}\dim SHO'(n-s)-\dim\{x^{(\alpha)}x^u\in\oplus_{i>1} V_3^i\mid\mathbf{I}(\alpha, u)=\widetilde{\mathbf{I}}(\alpha, u)=\phi\}-1\nonumber\\
&+(t-1)p^{t-2}(p-1)^22^s+(s-1)p^t2^{s-2}+p^{t-1}(p-1)2^{s-1}\nonumber\\
=&p^t2^{n-s}\Bigg(\sum^{n-s}_{i=2}\bigg((2^{n-s-1}-2^{n-s-l})\sum_{(i_1,\ldots,i_l)\in\mathbf{J}(l,n-s)}\prod_{c=1}^l\pi_{i_c}\bigg)+(p+1)^{n-s}-1\Bigg)\nonumber\\
&-2^{n-s}+(t-1)p^{t-2}(p-1)^22^s+(s-1)p^t2^{s-2}+p^{t-1}(p-1)2^{s-1}-1. & \qedhere \nonumber
\end{align}

\end{proof}

\subsection{$KO(n)$ and $SKO(n,\lambda)$}
Let $L=KO(n)$ or $SKO(n,\lambda)$.
In this subsection,
we will determine the  maximal R-subalgebras of type (\MyRoman{3}) for $L$.
 As for $HO(n)$ and $SHO(n)$, we have the following lemma:
\begin{lemma}\label{th4.11}
Let $A_{0}$ be a reducible maximal subalgebra of $L_{0}$.
If $V\subset L_{-1}$ is a nontrivial invariant subspace of $A_{0}$ such that $A_0=M_0(V)$,
then $M(L_{-1},A_{0})$ is an MGS of $L$ if and only if $V\in\mathfrak{V}^{L}_{\mathfrak{i}}\cup\mathfrak{V}^{L}_{\mathfrak{ne}}$.
\end{lemma}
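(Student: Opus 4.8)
The plan is to prove the two implications separately, in each case reducing to the degree-zero and degree-one components and importing the $HO/SHO$ machinery of Proposition~\ref{th:4.5} and Lemma~\ref{th:4.6} together with the $KO/SKO$-specific facts in Lemmas~\ref{th:2.4}, \ref{th:2.5}, \ref{th:3.6} and \ref{th3.8}. First I would record the $KO/SKO$ analogue of Lemma~\ref{th:4.6}: since $L_0=KO_0$ decomposes as $\mathrm{span}_{\mathbb{F}}\{x_ix_j\mid i,j\in\overline{1,2n}\}\oplus \mathbb{F}z$ and a direct check gives $\mathrm{ad}\,z=-\mathrm{zd}$ (so that $z$ acts on $L_{-1}$ and $L_{-2}$ as the scalars $1$ and $2$ and is \emph{central} in $L_0$), both the reducibility of $A_0=M_0(V)$ and its maximality in $L_0$ are governed entirely by the symplectic part, exactly as for $HO$. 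Consequently a reducible maximal $A_0=M_0(V)$ forces $V\in \mathfrak{V}^{L}_{\mathfrak{n}}\cup\mathfrak{V}^{L}_{\mathfrak{i}}\cup\mathfrak{V}^{L}_{\mathfrak{ne}}$, and the stated biconditional becomes equivalent to proving that $M(L_{-1},A_0)$ is an MGS precisely when $V$ is \emph{not} nondegenerate.

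For the sufficiency $V\in \mathfrak{V}^{L}_{\mathfrak{i}}\cup\mathfrak{V}^{L}_{\mathfrak{ne}}$ I would follow the scheme of Proposition~\ref{th:4.5}. Given $0\neq D\in L\setminus M(L_{-1},A_0)$, the definition of the filtration $M_i(L_{-1},A_0)$ lets me bracket $D$ repeatedly with elements of $L_{-1}$ to produce $0\neq D'\in L_0\setminus A_0$; the maximality of $A_0$ then yields $L_0\subset \langle M(L_{-1},A_0),D\rangle$. The genuinely new point over the $HO$ case is the generation of $L_1$: here I would use that $M_1(L_{-1},A_0)\neq 0$ and that, for $V$ isotropic, it contains a $U(L_0)$-generator of $L_1$ in the sense of Lemma~\ref{th:2.5}; combining this with the now-available full $L_0$ and the submodule description of Lemma~\ref{th:2.4} forces $L_1\subset\langle M(L_{-1},A_0),D\rangle$, after which transitivity together with $L=\langle L_{-1}+L_0+L_1\rangle$ gives $\langle M(L_{-1},A_0),D\rangle=L$. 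The class $\mathfrak{V}^{L}_{\mathfrak{ne}}$ is absorbed by this argument because, as in the proof of Theorem~\ref{0.003}(1), each $V\in\mathfrak{V}^{L}_{\mathfrak{ne}}$ satisfies $M_0(V)=M_0(V')$ for some isotropic $V'$, so $M(L_{-1},A_0)$ coincides with a subalgebra already treated.

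The necessity — showing that a nondegenerate $V$ does \emph{not} yield an MGS — is where the contact structure genuinely departs from $HO/SHO$, and this is the step I expect to be the main obstacle. Here $[V,V]=L_{-2}$ because the form is nondegenerate on $V$, and the central contact element $z$ lies in $A_0$; these two features are exactly what allow a strict enlargement. The plan is to show that $M(L_{-1},M_0(V))$ is strictly contained in a \emph{proper} subalgebra, namely in one of the type~(\MyRoman{1}) MGS of Theorem~\ref{03.1}(3): I would compute $M_i(L_{-1},M_0(V))$ for nondegenerate $V$ and verify that the divergence-type constraint cutting out $L^{\lambda}_i$ (equivalently the $\mathcal{O}(n,n)$-constraint cutting out $L_{i,0}$) is automatically met by the nondegenerate stabilizer, so that $M(L_{-1},M_0(V))\subseteq M^{\lambda}$ or $M(L_{-1},M_0(V))\subseteq \overline{M}$ for a suitable parameter. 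Since any such type~(\MyRoman{1}) MGS has full degree-zero part while $M_0(V)=A_0\subsetneq L_0$, the containment is strict and the ambient subalgebra is proper, so $M(L_{-1},M_0(V))$ fails to be maximal. The delicate part, and the place where nondegeneracy rather than isotropy is essential, is the explicit degree-one computation pinning down which $\lambda$ (or the $z$-free component) the nondegenerate stabilizer respects; equivalently, one must exhibit the one escaping submodule of $L_1$ that the generation argument of the sufficiency step fails to recover when $[V,V]=L_{-2}$.
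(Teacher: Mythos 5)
Your plan coincides with the paper's own proof in both directions: for sufficiency the paper does exactly what you outline, exhibiting $x_{l}z\in L_{1,1}\cap M(L_{-1},A_{0})$ and $x_{l}x_{m}x_{m'}\in (L_{1,0}\setminus L_{1,0}')\cap M(L_{-1},A_{0})$ so that, once the maximality of $A_{0}$ recovers all of $L_{0}$, these two elements escape every maximal $L_{0}$-submodule of $L_{1}$ (Lemmas \ref{th:2.5} and \ref{th:2.4}) and force $L_{1}\subset\langle M(L_{-1},A_{0}),D\rangle$; for necessity the paper shows $M_{i}(L_{-1},A_{0})=V^{i+2}\oplus V_{3}^{i+2}\subset L_{i,0}$ for nondegenerate $V$, so that $M(L_{-1},A_{0})$ is strictly contained in the proper type (I) subalgebra $L_{-2}+L_{-1}+L_{0}+L_{1,0}+\cdots$, which is your intended containment in a type (I) MGS. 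The one step you defer as ``the delicate part'' --- ruling out a $z$-component in $M_{1}(L_{-1},A_{0})$ when $[V,V]=L_{-2}$ --- is carried out in the paper by writing a putative $u=x_{i}z+f_{1}$, reducing to $f_{1}''$ with $[x_{t},f_{1}'']=(-1)^{\tau(i)}x_{t}x_{i}$ for all $x_{t}\in V_{3}$, and deriving the contradiction $\partial_{t}\partial_{t'}(\widetilde{f_{1}''})=(-1)^{\tau(i)}2x_{i}\neq 0$, which is precisely the obstruction you anticipated.
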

\begin{proof}
Let $L=KO(n)$. Since $V$ is a invariant subspace of $A_{0}$,
we can see that $A_0=M_0(V)$.
Then as $M_0^{KO}(V)=M_0^{HO}(V)\oplus \mathbb{F}z$, we can see that
$V\in\mathfrak{V}_{\mathfrak{n}}^L\cup\mathfrak{V}^{L}_{\mathfrak{i}}\cup\mathfrak{V}^{L}_{\mathfrak{ne}}$.
We may assume that $V$ is a standard subspace of $L_{-1}$.
Suppose $V_1,V_2,\overline{V}_2$ and $V_3$ are subspaces of $L_{-1}$
associated with $V$ which are defined as in \eqref{4.2},  and $\mathbf{I}_{v_1},\mathbf{I}_{v_2}, \mathbf{I}_{\overline{v}_2}$ and
$\mathbf{I}_{v_3}$ are corresponding index sets.

We consider the sufficiency firstly.
Suppose $V\in\mathfrak{V}^{L}_{\mathfrak{i}}\cup\mathfrak{V}^{L}_{\mathfrak{ne}}$.
Assert that
$$
M(L_{-1},A_{0})\cap L_{1,1}, M(L_{-1},A_{0})\cap (L_{1,0}\backslash L'_{10}) \neq \emptyset.
$$
Indeed if $V\in\mathfrak{V}^{L}_{\mathfrak{i}}\cup\mathfrak{V}^{L}_{\mathfrak{ne}}$,
then there exists $l \in \mathbf{I}_{v_2}$ and $ m\in \overline{1,2n}$
such that
$x_{l}z\in L_{1,1}\cap M(L_{-1},A_{0})$
and $x_{l}x_{m}x_{m'}\in L_{1,0}\cap M(L_{-1}, A_{0})$ respectively.
Now we show that  $M(L_{-1}, A_0)$ is an MGS of $L$.
For any $D\in L\backslash M(L_{-1},A_{0}),$
there exists
$
D'\in L_{0}\backslash M_{0}(L_{-1},A_{0})
$,
by the definition of $M(L_{-1},A_{0})$.
We have $\langle{A_{0},D'\rangle}= L_{0}$,
as $A_{0}$ is maximal in $L_0$.
So
$\langle{M(L_{-1},A_{0}),D'\rangle}=L$ by our assertion.

Next we consider the necessity.
Suppose $V\notin\mathfrak{V}^{L}_{\mathfrak{i}}\cup\mathfrak{V}^{L}_{\mathfrak{ne}}$.
Then $V\in \mathfrak{V}^{L}_{\mathfrak{n}}$.
Now we show that $M(L_{-1},A_{0})$ is not an MGS of $L$.
For any $i>0$, set
$$
M_{i}^{0}=V^{i+2}\oplus V_3^{i+2}.
$$
Assert that $M_{i}^{0}=M_i(L_{-1}, A_0)$, for any $i>0$.
Firstly, we show that $M_{1}^{0}=M_1(L_{-1}, A_0)$.
Obviously, $M_1^0\subset M_1(L_{-1}, A_0)$.
Conversely, suppose there is
$
u\in M_1(L_{-1}, A_0)\backslash \mathrm{Ann}(L_{-2}).
$
We may assume that $u=x_iz+f_1$, where $x_i\in V$ and $f_1\in \mathrm{Ann}(L_{-2})$.
For any $t \in \overline{1,2n}$,
$$
[x_t,u] = [x_t,x_iz+f_1] = -(-1)^{\tau(i)}x_tx_i +[x_{t},x_i]z +[x_t,f_1]\in A_0.
$$
Consequently,
\begin{equation}\label{5.4}
-(-1)^{\tau(i)}x_tx_i + [x_t,f_1] \in A_0
\end{equation}
Suppose  $f_1 = f_1' +f_1'' +f_1'''$,
where $f_1'\in V^2V_3$, $f_1''\in VV_3^2$ and
 $f_1'''\in V^3\oplus V_3^3$.
For any $x_t \in V$, from \eqref{5.4}
we can see that
$$
[x_t,f_1'] + [x_t,f_1'']+ [x_t,f_1'''] -(-1)^{\tau(i)} x_t x_i \in A_0.
$$
So
$[x_t,f_1'] \in A_0$.
Since $VV_3\nsubseteq A_0$
and $[x_t,f_1']\in VV_3 $,
we have $[x_t,f_1'] =0$.
As $x_t$ can be any element in $V$,
we have $f_1' =0$.
Consequently, $f_1=f_1''+f_1'''$.
For any $x_t\in V_3$, we have
$$
-(-1)^{\tau(i)}x_tx_i +[x_t,f_1''] =0.
$$
Take $x_t\in (V_3)_{\bar 0}$. Then
$$
f_1'' =(-1)^{\tau(i)} x_t x_{t'}x_i + \widetilde{f}_1'',
$$
where $\partial_{t'}(\widetilde{f}_1'')=0.$
Moreover,
$\partial_t(\widetilde{f_1''}) =-(-1)^{\tau(i)}2x_{t'}x_i$,
since $\partial_t (f_1'') = -(-1)^{\tau(i)}x_{t'}x_i$.
So
$$
\partial_t \partial_{t'}(\widetilde{f_1''})= (-1)^{\tau(i)}2x_i \neq 0,
$$
which is contradict to $\partial_t \partial_{t'}(\widetilde{f_1''}) =0$.
Thus
$
 M_1(L_{-1}, A_0)\subset M_1^0.
$
For $i>1$, it is easily to see that
$M_i(L_{-1}, A_0)=M_i^0$ by induction.
Take $(s,t)\in \mathbf{I}_{v_2}\times\mathbf{I}_{v_3}$, then
$$
x_{s}x_{t}x_{t'}\in M_{1}(V)\setminus M_{1}(L_{-1}, A_{0}).
$$
Thus,
$
M(L_{-1}, A_{0})\subsetneq L_{-2}+L_{-1}+L_{0}+L_{1,0}+\cdots.
$
So $M(L_{-1},A_{0})$ is not an MGS of $L$.

For $L=SKO$, the lemma can be shown in a similar way.
\end{proof}
Let $A_{0}$ be a reducible maximal subalgebra of $L_{0}$.
Then there exists $V\subset L_{-1}$
such that $A_0=M_0(V)$.
Moreover, if $V\in\mathfrak{V}_{\mathfrak{ne}}^L$,
then there exists an isotropic subspace $V'$ of $V$ such that
$A_0=M(V')$.
Thus we only need to consider the case when $V\in \mathfrak{V}_{\mathfrak{i}}^L$
in order to enquire the maximal R-subalgebras of type (\MyRoman{3}).

Suppose $L=KO(n)$ or $SKO(n,\lambda)$.
Let $A_0$ be a maximal R-subalgebra of $L_{0}$.
Suppose $V\in \mathfrak{V}_{\mathfrak{i}}^L$ such that $A_0=M(V)$ and $\dim V=s$.
We  may assume that $V$ is a standard subspace of $L_{-1}$.
Suppose $V_1,V_2,\overline{V}_2$ and $V_3$ are subspaces of $L_{-1}$
associated with $V$ which are defined as in \eqref{4.2},  and $\mathbf{I}_{v_1},\mathbf{I}_{v_2}, \mathbf{I}_{\overline{v}_2}$ and
$\mathbf{I}_{v_3}$ are corresponding index sets.
Let $y_0=\sum_{i\in\mathbf{I}_{\overline{v}_2}}(-1)^{\tau(i)}x_ix_{i'}$.
We have the following lemma:
\begin{lemma}\label{th:4.12}
Suppose $u\in L $ and $u\notin \mathrm{Ann}(L_{-2})$.
\item[(1)]For $L=KO(n)$,
\begin{enumerate}
\item[(a)] if $s=n$, then $u\in M{(L_{-1},A_{0})}$ if and only if
$u=fz+fy_0+g$,
where $f, \ g\in \left(\bigoplus_{i\geq0}V^i\overline{V}_2 \right)\oplus\left(\bigoplus_{j\geq0}V^j\right)$.
\item[(b)]if $s\neq n$, then $u\in M{(L_{-1},A_{0})}$ if and only if $u=fz+fy_0+g$,
where
$f\in \oplus_{i\geq 0,j\geq0}V^iV_3^j, \ g\in\left(\bigoplus_{i\geq0}V^i\overline{V}_2\right)\oplus\left(\bigoplus_{i\geq 0,j\geq0}V^iV_3^j\right)$.
\end{enumerate}
\item[(2)] For $L=SKO(n,\lambda)$,
\begin{enumerate}
\item[(a)]if $n\lambda-t+s=0 ~\mbox{in}~ \mathbb{F}$,
then $u\in M{(L_{-1},A_{0})}$ if and only if
$u=f(z+n\lambda x_1x_{1'})_+g$, where
 $f\in \bigoplus_{i\geq0}V^i,  g\in\bigg(\Big(\bigoplus_{i\geq0}V^i\overline{V}_2\Big)\oplus\Big(\bigoplus_{i\geq 0,j\geq0}V^iV_3^j\Big)\bigg)\cap L$.
\item[(b)]if $n\lambda-t+s\neq 0  ~\mbox{in}~ \mathbb{F}$, then $u=k(z+n\lambda x_1x_{1'})+g$, where
$k\in \mathbb{F}, g\in V_1^2\oplus V_2L_{-1}\oplus V_3^2$.
\end{enumerate}
\end{lemma}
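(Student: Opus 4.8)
The plan is to argue by induction on the $\mathbb{Z}$-degree $i$, reducing at once to the case where $u$ is homogeneous of degree $i$ (and $V$ is the standard subspace fixed before the lemma, as we may by Lemma \ref{th:1.3}); since every element of $M(L_{-1},A_{0})$ is the sum of its homogeneous components and the spaces on the right-hand side are graded, this loses nothing. Write $z=x_{2n+1}$ and decompose $u=u_0+u_1z$ with $u_0,u_1\in\mathrm{Ann}(L_{-2})=\mathcal{O}(n,n)$; the hypothesis $u\notin\mathrm{Ann}(L_{-2})$ says exactly $u_1\neq 0$.

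First I would pin down $u_1$. Since $L_{-2}=\mathbb{F}\cdot 1$ and the condition $[M_i,L_{-1}]\subset M_{i-1}$ forces $[M_i,L_{-2}]\subset M_{i-2}$ through the Jacobi identity, membership $u\in M_i(L_{-1},A_0)$ yields $[1,u]\in M_{i-2}(L_{-1},A_0)$. A direct computation with the $KO$-bracket gives $[1,u]=-2\partial_{2n+1}(u)=\mp 2u_1$, and as $p>3$ this places $u_1$ in $M_{i-2}(L_{-1},A_0)\cap\mathrm{Ann}(L_{-2})$. That intersection coincides with the $HO$-subalgebra $M(HO_{-1},M_0(V))$ described in the proof of Theorem \ref{0.003}(1), so $u_1=f$ lies in the annihilator part $\bigl(\oplus_{i,j\geq0}V^iV_3^j\bigr)\oplus\bigl(\oplus_{l\geq0}V^l\overline{V}_2\bigr)$; the further restriction of $f$ will come out of the next step.

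Next I would analyse $[u,x_t]$ for $t\in\overline{1,2n}$. A direct computation with the $KO$-bracket produces the decomposition
\[
[u,x_t]=\bigl(D_{HO}(u_0)(x_t)+u_1x_t\bigr)+(-1)^{\tau(t')}D_{HO}(u_1)(x_t)\,z,
\]
so its $z$-coefficient is $\pm D_{HO}(f)(x_t)$ and its $\mathrm{Ann}(L_{-2})$-part is $D_{HO}(u_0)(x_t)+fx_t$. Matching this against the inductive description of $M_{i-1}(L_{-1},A_0)$, whose non-annihilator elements have the shape $f'z+f'y_0+g'$, reduces everything to the eigenvalue identity $D_{HO}(y_0)(x_t)=-x_t$ for $t\in\mathbf{I}_{v_2}\cup\mathbf{I}_{\overline{v}_2}$ and $D_{HO}(y_0)(x_t)=0$ for $t\in\mathbf{I}_{v_3}$; that is, $\mathrm{ad}\,y_0$ acts as $-\mathrm{id}$ on $V_2\oplus\overline{V}_2$ and as $0$ on $V_3$. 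Feeding this together with the Leibniz rule into $D_{HO}(fy_0)(x_t)$ shows that the particular choice $u_0=fy_0$ cancels the anomalous term $fx_t$ for every $t\in\mathbf{I}_{v_2}\cup\mathbf{I}_{\overline{v}_2}$, while the remaining freedom in $u_0$ is precisely an element $g$ of the $HO$-nice space; this gives both inclusions. When $s=n$ one has $V_3=0$, no constraint from the $V_3$-directions arises, and $f,g$ range over $\bigl(\oplus_{i\geq0}V^i\overline{V}_2\bigr)\oplus\bigl(\oplus_{j\geq0}V^j\bigr)$, which is (1)(a); when $s\neq n$ the indices $t\in\mathbf{I}_{v_3}$ (where the eigenvalue is $0$) force $fx_t$ itself to be nice, confining $f$ to $\oplus_{i,j\geq0}V^iV_3^j$ and leaving $g$ in the larger space, which is (1)(b).

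Finally, for $L=SKO(n,\lambda)$ I would use $M^{SKO}(L_{-1},A_0)=M^{KO}(L_{-1},A_0)\cap SKO$ and impose the divergence-free condition $\mathrm{div}_\lambda=0$ on the $KO$-form $fz+fy_0+g$, rewriting the contact direction through the torus element $z+n\lambda x_1x_{1'}\in T_{SKO}$. Computing $\mathrm{div}_\lambda$ of $f(z+n\lambda x_1x_{1'})$ yields a scalar factor $n\lambda-t+s$, the value of $n\lambda-\mathrm{zd}$ on the relevant monomials, matching the coefficient $n\lambda-\mathrm{zd}(x^{(\alpha)}x^u)$ appearing in $B(\alpha,u,\lambda,q)$ of Lemma \ref{3.11}: when $n\lambda-t+s=0$ in $\mathbb{F}$ the whole family survives and $f$ ranges over $\oplus_{i\geq0}V^i$, giving (2)(a), whereas when $n\lambda-t+s\neq0$ the only surviving $z$-parts are the scalar multiples $k(z+n\lambda x_1x_{1'})$, giving (2)(b). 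The main obstacle is the sign-and-coefficient bookkeeping in the two displayed computations, namely verifying the eigenvalue identity $D_{HO}(y_0)(x_t)=-x_t$ with the $(-1)^{\tau(j)}$ weights in $y_0$ and tracking exactly which monomials survive $\mathrm{div}_\lambda$, while keeping the inductive description of the annihilator part synchronised with the $HO$ and $SHO$ computations of Section 4.
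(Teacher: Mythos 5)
Your proposal is correct and follows essentially the same route as the paper's proof: both extract the $z$-coefficient $f$ via $[1,u]=\mp 2\partial_{2n+1}(u)$ and identify it with an element of the $HO$-part $\Omega\cap HO$, and both rest on the bracket computation $[x_k,fz+fy_0]=[x_k,f]z-(-1)^{|f|}x_kf+(-1)^{|f|}[x_k,y_0]f+[x_k,f]y_0$, in which the diagonal action of $y_0$ on $L_{-1}$ cancels the anomalous term $x_kf$ — this is exactly your ``eigenvalue identity,'' and the induction on the degree of $u$ versus the paper's induction on the degree of $f$ is only a presentational difference. The paper writes out only the case $L=KO(n)$ with $s=n$ and declares the rest similar, so your sketches of (1)(b) and the $SKO$ divergence computation go beyond what is printed but are consistent with it.
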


\begin{proof}
Here we only show the case that $L=KO(n)$ with $s=n$.
Suppose  $L=KO(n)$ with $s=n$ below.
First we consider the sufficiency.
Denote $\Omega=M(L_{-1}, A_0)$.
Suppose $u=fz+fy_{0}+g$ satisfying the condition (1)(a).
Evidently, $g\in \Omega$.
Thus we only need to show that $fz+fy_0\in \Omega$ by induction on $|f|$.
For $|f|=1$, we may  assume that  $f=x_{i},\ i\in \overline{1,2n}$.
For any $k\in  \overline{1,2n}$, we have
$$
[x_{k}, x_iz+x_iy_0]=[x_{k},x_i]z-(-1)^{\tau(i)}x_kx_i+(-1)^{\tau(i)}[x_{k},y_{0}]x_i+[x_{k},x_i]y_{0}\in A_0.
$$
Thus $x_iz+x_iy_0\in \Omega, \ \mbox{for}\  i\in \overline{1,2n}$.
Suppose $|f|>1$. For any $k\in  \overline{1,2n}$,
we see from induction that
$$
[x_{k}, fz+fy_0]=[x_{k},f]z-(-1)^{|f|}x_kf+(-1)^{|f|}[x_{k},y_{0}]f+[x_{k}, f]y_{0}\in \Omega.
$$
So $fz+fy_0\in \Omega$,
and consequently $u\in \Omega$.
Next we consider the necessity. Suppose $u=fz+g\in \Omega$, $f,g\in\mathrm{Ann}(L_{-2})$.
We will show that $u$ satisfies the condition (1)(a).
Since  $u\in \Omega$,
we have
$$
f=-(-1)^{|f|}[1,u]\in \Omega\cap HO(n)=\Bigg(\bigoplus_{i\geq0}V^i\overline{V}_2 \Bigg)\oplus\Bigg(\bigoplus_{j\geq0}V^j\Bigg).
$$
Hence $fz+fy_0\in \Omega$. Now $u=fz+fy_0+g'$, where $g'\in \Omega\cap HO$.
So $u$ satisfies the condition (1)(a).
\end{proof}
\begin{proof}[\textit{Proof of Theorem \ref{0.003}(2)}] It is a direct consequence  of  Lemma \ref{th:4.12}.
\end{proof}

In order to complete the description of all the MGS of Lie superalgebra for odd Cartan type,
it remains for us to describe the maximal S-subalgebras.
Unfortunately, we have not obtained their explicit description similar to other types.
However, we know that
$$
HO_0\cong \widetilde{\frak{p}}(2n)\cong SKO_0; \ SHO_0\cong \frak{p}(2n)
;\ KO_0\cong \widetilde{\frak{p}}(2n)\oplus \mathbb{F}I_{2n\times2n},
$$
where
\begin{align}
 \widetilde{\frak{p}}(2n)&=\Big\{\begin{pmatrix}
A & B\\
 C&-A^{T}
 \end{pmatrix}\in \mathfrak{gl}(2n,2n)\mid B=B^{T},\ C=-C^{T}\Big\},\nonumber\\
\frak{p}(2n)&=\Big\{\begin{pmatrix}
A & B\\
 C&-A^{T}
\end{pmatrix}\in \mathfrak{gl}(2n,2n)\mid B=B^{T},\ C=-C^{T}, \mathrm{tr}A=0\Big\}.\nonumber
\end{align}
We only need to determine all the  maximal subalgebras of $\frak{p}(2n)$, $\widetilde{\frak{p}}(2n)$ and $\widetilde{\frak{p}}(2n)\oplus \mathbb{F}I$,
by Lemmas \ref{th4.1} and \ref{th:4.2}.
So the classification of maximal S-subalgebras of $L$ can be reduced to
the classification  for the maximal subalgebras of $\frak{p}(2n)$, $\widetilde{\frak{p}}(2n)$ and $\widetilde{\frak{p}}(2n)\oplus \mathbb{F}I$.

\addcontentsline{toc}{chapter}{Appendix}
\appendix
\renewcommand\thesection{Appendix}
\section{}

\renewcommand\thesection{\arabic{section}}
\renewcommand{\thelemma}{A.\arabic{lemma}}
\numberwithin{equation}{section}
Assume that  the underlying field $\mathbb{F}$ is
of characteristic $p\neq2$.

Recall the definition of $\mathbb{Z}_2$-graded vector space.
A $\mathbb{Z}_2$-graded vector space is a vector space $V$
which decomposes into a direct sum of the form
$V=V_{\bar 0}\oplus V_{\bar 1}$,
 where $V_{\bar 0}$ and $V_{\bar 1}$ are vector spaces.
 Denote  the superdimension of $V$
 by $\mathrm{sdim}V=(\dim V_{\bar 0}, \dim V_{\bar 1})$.
The elements of $V_{\bar 0}$ (resp. $V_{\bar 1}$ ) are called even (resp. odd).
An element of $V$ is called $\mathbb{Z}_2$-homogeneous (or simply homogeneous)
if it is an  even or odd element.
A basis of $V$ is called a homogeneous basis
if each basis element is homogeneous.
To say $\{\alpha_1,\ldots,\alpha_m\mid\beta_1,\ldots,\beta_n\}$ is an ordered homogeneous basis of $V$
means $\alpha_i\in V_{\bar 0}$, $\beta_j\in V_{\bar 1}$.
A subspace $U$ of $V$ is called $\mathbb{Z}_2$-graded
if $U=\oplus_{\theta\in \mathbb{Z}_2}(U\cap V_{\theta})$.

We are now in the position to describe a $\mathbb{Z}_2$-graded vector space with a non-degenerate bilinear form.
From linear algebra theory,
a vector space $W$ is called a symplectic space,
if there is a non-degenerate, skew-symmetric bilinear form $(-,-)$ on $W$,
denoted as $(W, (-,-))$.

\begin{definition}\label{A.1}
A $\mathbb{Z}_2$-graded vector space $V$ is called a symplectic superspace,
if there is an odd, non-degenerate, skew supersymmetric bilinear form $(-,-)$ on $V$,
denoted as $(V, (-,-))$  or simply, $V$.
\end{definition}
\begin{remark}
 A subspace  of a symplectic superspace in general is  not a symplectic superspace.
\end{remark}
From the linear algebra theory,
if $(W, (-,-))$ is a symplectic space,
then there exists a basis of $W$
$$
\{\alpha_1, \alpha_2, \ldots, \alpha_n, \alpha_{1'}, \alpha_{2'},\ldots,
\alpha_{n'}\}
$$
such that $(\alpha_i, \alpha_{i'})=-(\alpha_{i'}, \alpha_{i})=1$  for $i\in \{1,\ldots,n\}$;
$(\alpha_i, \alpha_{j})=0$, for $i\neq j'$.
We have a similar fact as in non-super case:
\begin{lemma}\label{th:0.19}

Let $U$ be a subspace of a symplectic superspace $V$.
Then there exists a homogeneous basis of $U$
$$
\{\alpha_1, \ldots,\alpha _k, \alpha_{k+1}, \ldots, \alpha_l \mid \alpha_{1'}, \ldots, \alpha_{k'},\alpha_{(k+1)'},\ldots,\alpha_{s'}\}
$$
such that for
$i\in \{1,\ldots, l\}$ and $j\in\{1',\ldots,s'\}$
\begin{equation}\tag{A.3}
(\alpha_i, \alpha_{j})=
\begin{cases}
1,&  i \in \{1,\ldots,k\}, j=i'\\
0,& \text{otherwise}
\end{cases}
\end{equation}
where $2k$ is the rank of $U$ and $\mathrm{sdim}U=(l,s)$.
In particular, the dimension of $V$ is even.
\end{lemma}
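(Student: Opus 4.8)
The plan is to reduce everything to the ordinary symplectic basis theorem by exploiting the fact that the form is \emph{odd}, which forces all nonzero pairings to occur between an even and an odd vector; consequently the entire ``symplectic'' content of $(-,-)$ lives in the even--odd pairing, and the hyperbolic pairs produced below are necessarily even--odd pairs. First I would introduce the radical of the restricted form, $R=\{u\in U\mid (u,u')=0\text{ for all }u'\in U\}$, and observe that $R$ is a $\mathbb{Z}_2$-graded subspace. This is exactly where oddness is essential: writing $u=u_{\bar 0}+u_{\bar 1}$, for a homogeneous $u'\in U$ at most one of $(u_{\bar 0},u')$, $(u_{\bar 1},u')$ can be nonzero (the summand whose parity is opposite to that of $u'$), so the vanishing of $(u,u')$ for every homogeneous $u'$ forces $(u_{\bar 0},u')=0$ and $(u_{\bar 1},u')=0$ separately. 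Hence $u_{\bar 0},u_{\bar 1}\in R$, and $R=R_{\bar 0}\oplus R_{\bar 1}$.

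Next I would pick a graded complement $W$ of $R$ in $U$, so that $U=W\oplus R$ with both summands $\mathbb{Z}_2$-graded, and check that the restriction of $(-,-)$ to $W$ is nondegenerate: any $w\in W$ orthogonal to all of $W$ is also orthogonal to $R$ (since $R$ is the radical of $U$, so $(w,R)\subset(U,R)=0$), hence $(w,U)=0$ and $w\in R\cap W=0$. The decisive structural step is that $W$, with its odd nondegenerate form, induces a \emph{perfect} pairing $W_{\bar 0}\times W_{\bar 1}\to\mathbb{F}$: if $w_{\bar 0}\in W_{\bar 0}$ pairs to zero with all of $W_{\bar 1}$, it also pairs to zero with $W_{\bar 0}$ by oddness, so it lies in the radical of $W$ and vanishes. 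Therefore $\dim W_{\bar 0}=\dim W_{\bar 1}=:k$, and I may choose a basis $\alpha_1,\ldots,\alpha_k$ of $W_{\bar 0}$ together with its dual basis $\alpha_{1'},\ldots,\alpha_{k'}$ of $W_{\bar 1}$ relative to this perfect pairing, i.e.\ $(\alpha_i,\alpha_{j'})=\delta_{ij}$; skew supersymmetry then gives $(\alpha_{i'},\alpha_j)=-\delta_{ij}$, while even--even and odd--odd pairings vanish automatically. The Gram matrix of $(-,-)$ on $W$ in this basis is the $2k\times 2k$ hyperbolic block, so the rank of the restricted form is $2k$.

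Finally I would complete the list by choosing arbitrary bases $\alpha_{k+1},\ldots,\alpha_l$ of $R_{\bar 0}$ and $\alpha_{(k+1)'},\ldots,\alpha_{s'}$ of $R_{\bar 1}$; as $R$ is the radical, every pairing involving these vectors is $0$. Assembling $\{\alpha_1,\ldots,\alpha_l\mid\alpha_{1'},\ldots,\alpha_{s'}\}$ yields precisely the relations (A.3), with $\mathrm{sdim}\,U=(l,s)$ since $\dim U_{\bar 0}=k+(l-k)=l$ and $\dim U_{\bar 1}=k+(s-k)=s$, and with rank $2k$ coming from the hyperbolic part. The concluding assertion that $\dim V$ is even is the special case $U=V$: then $R=0$ by nondegeneracy of $V$, so $V=W$, $\dim V_{\bar 0}=\dim V_{\bar 1}=k$, and $\dim V=2k$. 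I do not expect a genuine obstacle; the only points demanding care are the parity bookkeeping and the verification that the radical is graded, both of which rely squarely on the form being odd rather than even, so I would make sure these are stated cleanly before the routine basis construction.
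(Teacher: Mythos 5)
Your proof is correct, but it takes a genuinely different route from the paper's. The paper argues by induction on $\dim U$: it locates one hyperbolic pair $\beta_1\in U_{\bar 0}$, $\beta_{1'}\in U_{\bar 1}$ with $(\beta_1,\beta_{1'})=1$, orthogonalizes the remaining basis vectors against this pair by the substitutions $\beta_i=\beta_i'-(\beta_i',\beta_{1'})\beta_1$ and $\beta_{j'}=\beta_{j'}'-(\beta_{j'}',\beta_1)\beta_{1'}$, splits off $U_1=\mathrm{span}\{\beta_1\mid\beta_{1'}\}$, and applies the induction hypothesis to the complement. You instead make a single structural decomposition $U=W\oplus R$ with $R$ the radical of the restricted form, verify that $R$ is graded and that the form on $W$ is a perfect pairing $W_{\bar 0}\times W_{\bar 1}\to\mathbb{F}$, and then take a dual basis. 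Both arguments are sound (your use of the paper's standing convention that all subspaces are $\mathbb{Z}_2$-graded is needed for $R$ and its complement to be graded, and you justify the gradedness of $R$ correctly from the oddness of the form). Your version is non-inductive and makes the key structural consequences of oddness more transparent: the rank is even and the hyperbolic part splits equally between parities because $W_{\bar 1}\cong W_{\bar 0}^{*}$; it also delivers the parenthetical claim about $\dim V$ immediately as the case $R=0$. The paper's induction is more elementary and constructive, producing the basis by explicit Gram--Schmidt-type corrections, which is closer in spirit to how the symplectic bases are actually manipulated elsewhere in the appendix (e.g.\ in Lemma A.5). Neither approach is more general than the other; the choice is one of taste.
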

\begin{proof}
We use induction on $\dim U$.
If $\dim U=1$, then the lemma obviously holds
since $(-,-)$ is odd.
Now assume that  $\dim U>1$.
If $(-,-) =0$, the lemma is obviously true.
Otherwise, there are  vectors  $\beta_1 \in V_{\overline 0}$, $\beta_{1'} \in V_{\overline 1}$
such that  $(\beta_1, \beta_{1'}) \neq 0$.
We may assume that $(\beta_1, \beta_{1'})=1$.
Extend $\{\beta_1, \beta_{1'}\}$ to a homogeneous basis of $U$
$$\{\beta_1,\beta_2', \ldots, \beta_l'\mid \beta _{1'}, \beta_{2'}', \ldots, \beta_{s'}'\}.$$
For all $i\in \{2\ldots l\}$, $j\in\{ 2\ldots s\}$,
put
$$
\beta_i = \beta_{i}' - (\beta_i', \beta_{1'})\beta_1, \
\beta_{j'} = \beta_{j'}' - (\beta_{j'}', \beta_1) \beta_{1'}.
$$
We have $(\beta_i, \beta_{1'}) =0 = (\beta_{j'}, \beta_1)$,
and $\{\beta_1,\beta_2, \ldots, \beta_l \mid \beta _{1'}, \beta_{2'}, \ldots, \beta_{s'}\}$ is
a homogeneous basis of $U$.
Set
$$U =U_1+U_2$$
where $U_1=\mathrm{span}_{\mathbb{F}}\{\beta_1 \mid \beta_{1'}\},$
$U_2=\mathrm{span}_{\mathbb{F}}\{\beta_2, \ldots, \beta_l \mid \beta_{2'}, \ldots, \beta_{s'}\}$.
By induction there exists a homogeneous  basis of $U_2$
$$\{\alpha_2, \ldots, \alpha_{k_1}, \alpha_{k_1+1}, \ldots, \alpha_l \mid \alpha_{2'}, \ldots, \alpha_{k_1'}, \alpha_{(k_1+1)'}, \ldots, \alpha_{s'}\}$$
such that  for $i\in \{2,\ldots, l\}$ and $j\in\{2',\ldots,s'\}$
 $$
(\alpha_i, \alpha_{j})=
\begin{cases}
1,&  i \in\{2\ldots k_1\}, j=i',\\
0,& \text{otherwise},
\end{cases}
$$
where $2(k_1-1)$ is the rank of $U_2$.
Putting $\alpha_1=\beta_1, \alpha_{1'}=\beta_{1'}$, one sees that
$$\{\alpha_1, \ldots,\alpha _{k_1}, \alpha_{k_1+1}, \ldots, \alpha_l\mid \alpha_{1'}, \ldots, \alpha_{{k_1'}},\alpha_{(k_1+1)'},\ldots, \alpha_{s'}\}$$
is a desired basis and $k_1=\mathrm{R}(U)/2=k$.
\end{proof}
Suppose $U$ is a subspace of a symplectic superspace $V$.
If a homogeneous basis of $U$ satisfies the identity (A.3),
we call it a homogeneous symplectic basis of $U$.
\begin{corollary}\label{th:001}
Let $V$ be a symplectic superspace.
Then there exists a homogeneous basis
$$
\{\alpha_1, \alpha_2, \ldots \alpha_n\mid \alpha_{1'}, \alpha_{2'},\ldots
\alpha_{n'}\}
$$
such that for
$i\in \{1\ldots n\}$, $j\in \{1'\ldots n'\}
$
\begin{equation*}
(\alpha_i, \alpha_{j})=
\begin{cases}
1,&   j=i',\\
0,& \text{otherwise},
\end{cases}
\end{equation*}
where $\dim V=2n$.
\end{corollary}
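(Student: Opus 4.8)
The plan is to obtain this as a direct consequence of Lemma \ref{th:0.19}, applied with the subspace taken to be all of $V$. First I would invoke Lemma \ref{th:0.19} with $U=V$, which yields a homogeneous basis
$$
\{\alpha_1, \ldots, \alpha_k, \alpha_{k+1}, \ldots, \alpha_l \mid \alpha_{1'}, \ldots, \alpha_{k'}, \alpha_{(k+1)'}, \ldots, \alpha_{s'}\}
$$
of $V$ for which $(\alpha_i, \alpha_j)=1$ when $i\in\overline{1,k}$ and $j=i'$, and $(\alpha_i,\alpha_j)=0$ otherwise, where $2k=\mathrm{R}(V)$ and $\mathrm{sdim}V=(l,s)$. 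It then suffices to show that there are no unpaired basis vectors, that is, that $l=s=k$; writing $n=k$ and relabelling gives the asserted basis, and $\dim V=l+s=2k=2n$, which also forces $\dim V$ to be even.

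The key step is to verify that every ``extra'' basis vector lies in the radical of $(-,-)$ and is therefore excluded by nondegeneracy. Consider an even vector $\alpha_i$ with $i\in\overline{k+1,l}$. Since $(-,-)$ is odd, it pairs trivially with every even basis vector; and by the pairing conditions of Lemma \ref{th:0.19} it pairs to $0$ with every odd basis vector $\alpha_{j'}$, because $i>k$. Hence $\alpha_i$ is orthogonal to all of $V$, so $\alpha_i=0$ by nondegeneracy, a contradiction; thus $l=k$. An identical argument applied to an odd vector $\alpha_{j'}$ with $j\in\overline{k+1,s}$ --- using oddness of $(-,-)$ to dispose of same-parity pairings and the pairing table to dispose of the cross-parity ones --- forces $s=k$ as well.

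Since Lemma \ref{th:0.19} already carries the substantive content, this corollary presents no genuine obstacle; the only point requiring care is the systematic use of the oddness of $(-,-)$ to conclude that pairings between two even (or two odd) vectors vanish automatically, so that the full radical computation reduces to reading off the cross-parity entries of the table. Once $l=s=k$ is established, the statement follows immediately.
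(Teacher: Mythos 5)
Your proposal is correct and follows exactly the route the paper intends: the corollary is stated without proof as an immediate consequence of Lemma \ref{th:0.19} applied to $U=V$, with nondegeneracy forcing $l=s=k$. Your filling-in of the details --- using the oddness of $(-,-)$ to kill same-parity pairings and the pairing table to kill the cross-parity ones, so that any unpaired basis vector lies in the radical --- is precisely the argument the authors leave implicit, and it is sound.
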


If $U$ is a $\mathbb Z_2$-graded subspace of $V$,
put $U^{\bot}=\{x\in V\mid (x, U)=0\}$,
then
$U^{\bot}$ is a $\mathbb Z_2$-graded subspace of $U$, called a complementary space of $U$.
Moreover,
$
\operatorname{dim}U + \operatorname{dim}U^{\bot} = \operatorname{dim}V,
$
if $(-,-)$ is nondegenerate on $V$.
\begin{definition}\label{A6}
Let $V$ be  a symplectic superspace, and $V_1$, $V_2$ be subspaces of $V$.
We call $V_1$ is sym-isomorphic to $V_2$
if there is an isomorphism $f: V_1\longrightarrow V_2$ as supervector space such that
$(u,v)=(fu, fv)$, for all $u, \ v \in V$,
denoted as $V_1\stackrel{\mathrm{sspace }}{\cong} V_2$.  We call $f$ a symplectic isomorphism.
\end{definition}
Let $V$ be a symplectic superspace.
Put $\mathrm{ssdim}V=(k,s,m,n)$, where $2k$ is the rank of $V$, $s$
is the dimension of maximal isotropic subspace of $V$, $m=\dim V_{\bar0}$,
$n=\dim V_{\bar1}$.
\begin{proposition}
Let $V$ be a symplectic superspace,
and $V_1$, $V_2$ be subspaces of $V$. Then
$$
V_1\stackrel{\mathrm{sspace }}{\cong} V_2\Longleftrightarrow \mathrm{ssdim}V_1=\mathrm{ssdim}V_2
$$
\end{proposition}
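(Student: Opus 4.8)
The plan is to prove the two implications separately: the forward direction is essentially the observation that each of the four numerical data making up $\mathrm{ssdim}$ is an invariant preserved by a symplectic isomorphism, while the backward direction is a construction built on the normal form supplied by Lemma~\ref{th:0.19}.

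For the forward implication, suppose $f\colon V_1\longrightarrow V_2$ is a symplectic isomorphism. Since $f$ is an isomorphism of supervector spaces it preserves parity, so $\dim(V_1)_{\bar 0}=\dim(V_2)_{\bar 0}$ and $\dim(V_1)_{\bar 1}=\dim(V_2)_{\bar 1}$, matching the third and fourth entries of $\mathrm{ssdim}$. Because $f$ intertwines the two restricted forms, it carries the radical $V_1\cap V_1^{\bot}$ isomorphically onto $V_2\cap V_2^{\bot}$; as the rank equals the dimension minus that of the radical, the ranks agree and the first entries match. Finally, $f$ maps isotropic subspaces of $V_1$ bijectively onto isotropic subspaces of $V_2$ while preserving dimension, so the dimensions of maximal isotropic subspaces coincide and the second entries match. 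Hence $\mathrm{ssdim}V_1=\mathrm{ssdim}V_2$.

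For the backward implication, write $\mathrm{ssdim}V_1=\mathrm{ssdim}V_2=(k,s,m,n)$. Applying Lemma~\ref{th:0.19} to each $V_i$ produces a homogeneous symplectic basis with $m$ even vectors, $n$ odd vectors, and $k$ hyperbolic pairs $(\alpha_i,\alpha_{i'})$, $i\in\overline{1,k}$; denote the resulting bases by $\{\alpha_\bullet\}$ for $V_1$ and $\{\beta_\bullet\}$ for $V_2$, both obeying the identical Gram relations (A.3). I would then define $f$ by $\alpha_\bullet\mapsto\beta_\bullet$ on basis vectors and extend linearly. This is a parity-preserving linear isomorphism, and since the form is bilinear and its values on every pair of basis vectors agree for $V_1$ and $V_2$ by (A.3), we get $(u,v)=(fu,fv)$ for all $u,v\in V_1$. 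Thus $f$ is a symplectic isomorphism and $V_1\stackrel{\mathrm{sspace }}{\cong}V_2$.

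The step requiring care is reconciling the four invariants so that the two normal bases genuinely match index for index. One must observe that the normal form of Lemma~\ref{th:0.19} is controlled entirely by $(k,m,n)$, since both its index structure and its Gram matrix (A.3) depend on nothing else, and that $s$ is not an independent datum but satisfies $s=m+n-k$: a maximal isotropic subspace is obtained by adjoining the entire radical (of even dimension $m-k$ and odd dimension $n-k$, and automatically isotropic because the form is odd) to a $k$-dimensional maximal isotropic subspace of the nondegenerate hyperbolic part, the latter bound following from the perfect pairing between even and odd vectors there. Consequently the hypothesis $\mathrm{ssdim}V_1=\mathrm{ssdim}V_2$ collapses to the agreement of $(k,m,n)$, which is exactly what makes the index-matching map $f$ above well defined and form-preserving. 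This compatibility verification is the only real obstacle; once it is settled the construction is immediate.
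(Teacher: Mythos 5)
Your proof is correct: both implications are sound, and the key point you isolate --- that the normal form of Lemma~\ref{th:0.19} is determined by $(k,m,n)$ alone, with the maximal isotropic dimension forced to be $s=m+n-k$ --- is verified accurately, so the index-matching map between the two normal bases is well defined and form-preserving. The paper states this proposition without proof, and your argument via the normal-form lemma is evidently the intended one; the only cosmetic quibble is that the radical is isotropic simply because it pairs trivially with everything, not because the form is odd (oddness is needed only to see that the even part of the hyperbolic block realizes the bound $k$).
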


\begin{lemma}\label{0.8}
Let $V$ be a symplectic superspace.
Then $f:V\longrightarrow V$ is a sym-isomorphism if and only if
$K^TJK=J$, where
$K$ is the matrix of $f$ for any chosen homogeneous basis of $V$, denoted as $\begin{pmatrix}
A_{n\times n} & 0\\
 0&D_{n\times n}
\end{pmatrix}$, $\dim V=2n$,
$J=\begin{pmatrix}
 0& I_n\\
 -I_n&0
\end{pmatrix}$, that is, $A^TD=I$.
\end{lemma}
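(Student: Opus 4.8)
The plan is to reduce the statement to a coordinate computation in a homogeneous symplectic basis. First I would invoke Corollary \ref{th:001} to fix a homogeneous symplectic basis $\{\alpha_1,\ldots,\alpha_n\mid\alpha_{1'},\ldots,\alpha_{n'}\}$ of $V$, so that $\dim V_{\bar 0}=\dim V_{\bar 1}=n$ and $\dim V=2n$. Ordering it as $(\alpha_1,\ldots,\alpha_n,\alpha_{1'},\ldots,\alpha_{n'})$, I would record that the Gram matrix $\big((\alpha_a,\alpha_b)\big)$ is precisely $J$: the even-even and odd-odd blocks vanish because $(-,-)$ is odd, the even-odd block equals $I_n$ by the normalization in Corollary \ref{th:001}, and the odd-even block equals $-I_n$ because skew supersymmetry gives $(\alpha_{i'},\alpha_i)=-(\alpha_i,\alpha_{i'})=-1$ (the sign $(-1)^{|\alpha_i||\alpha_{i'}|}$ equals $1$ since $\alpha_i$ is even). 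Since $f$ is an isomorphism of supervector spaces it is even, hence preserves the grading, so its matrix in this basis is block-diagonal, $K=\begin{pmatrix}A&0\\0&D\end{pmatrix}$ with $A,D$ being $n\times n$ matrices as in the statement.

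Next I would prove the general principle that $f$ is a sym-isomorphism if and only if $K^TJK=J$. Writing $f(\alpha_b)=\sum_a K_{ab}\alpha_a$ and expanding by bilinearity gives $(f(\alpha_a),f(\alpha_b))=\sum_{c,d}K_{ca}K_{db}(\alpha_c,\alpha_d)=(K^TJK)_{ab}$; no Koszul sign intervenes because $f$ is even and the coordinates $K_{ca}$ are scalars in $\mathbb{F}$, which pull out of the bilinear form freely. By Definition \ref{A6}, the condition $(u,v)=(f u,f v)$ for all $u,v\in V$ is, again by bilinearity, equivalent to its validity on all pairs of basis vectors, that is, to $(K^TJK)_{ab}=J_{ab}$ for all $a,b$, which is exactly $K^TJK=J$.

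Finally I would carry out the block computation. With $K^T=\begin{pmatrix}A^T&0\\0&D^T\end{pmatrix}$ and $J=\begin{pmatrix}0&I\\-I&0\end{pmatrix}$, one gets $JK=\begin{pmatrix}0&D\\-A&0\end{pmatrix}$ and hence $K^TJK=\begin{pmatrix}0&A^TD\\-D^TA&0\end{pmatrix}$. Equating this to $J$ yields the two relations $A^TD=I$ and $D^TA=I$. The first already forces $D=(A^T)^{-1}$, whence $D^TA=A^{-1}A=I$ holds automatically, so the pair of conditions collapses to the single relation $A^TD=I$, which is the asserted form of $K^TJK=J$.

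The computation itself is entirely routine; the only points requiring genuine care are conceptual. The first is confirming that the prescribed matrix $J$ really is the Gram matrix of the chosen basis, which hinges on getting the sign in the odd-even block right from the definition of odd skew supersymmetry in Definition \ref{A.1}. The second is justifying that no Koszul signs appear when evaluating $(-,-)$ on the images $f(\alpha_a)$; this is where the evenness of $f$ (forced by its being a morphism of supervector spaces) and the scalar nature of the matrix entries are essential, and it is the step I would state most carefully in order to keep the reduction to $K^TJK=J$ clean.
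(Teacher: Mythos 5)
Your argument is correct, and it is the routine verification one would expect; the paper in fact states Lemma \ref{0.8} without any proof, so there is nothing to compare against beyond noting that your computation (Gram matrix of a homogeneous symplectic basis equals $J$, block-diagonality of $K$ from evenness of $f$, and the collapse of $A^TD=I$ and $D^TA=I$ to the single condition $A^TD=I$) fills that gap correctly. The one point worth flagging is that the lemma's phrase ``for any chosen homogeneous basis'' is only accurate for a homogeneous \emph{symplectic} basis as produced by Corollary \ref{th:001} -- for an arbitrary homogeneous basis the Gram matrix is some other odd nondegenerate matrix $J'$ and the condition becomes $K^TJ'K=J'$ -- and your proof correctly, if implicitly, makes that choice.
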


\begin{definition}
A $\mathbb Z_2$-graded subspace $W$ of a symplectic superspace $V$
is called isotropic (resp. symplectic) subspace of $V$
if $W \subset  W^{\bot}$ (resp. $W \cap W^{\bot}=\{0\}$).
\end{definition}

\begin{lemma}\label{th:0.4}
Let $V$ be a symplectic superspace,
and $W$ be an isotropic (symplectic) subspace of $V$.
Then any homogeneous basis of $W$ can be extended to a
homogeneous symplectic basis of $V$.
\end{lemma}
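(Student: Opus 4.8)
The plan is to argue by induction on $\dim W$, peeling off one symplectic (hyperbolic) plane at a time and passing to its orthogonal complement. The base case $\dim W=0$ is exactly Corollary \ref{th:001}, which furnishes a homogeneous symplectic basis of $V$ extending the empty basis. The geometric input for the inductive step is the splitting $V=U\oplus U^{\bot}$ valid for any symplectic subspace $U$: recall that $\dim U+\dim U^{\bot}=\dim V$ since $(-,-)$ is nondegenerate, and $U\cap U^{\bot}=\{0\}$ by the definition of a symplectic subspace. Moreover the restriction of $(-,-)$ to $U^{\bot}$ is still odd, skew supersymmetric and nondegenerate (if $x\in U^{\bot}$ pairs trivially with $U^{\bot}$, then it pairs trivially with all of $V=U\oplus U^{\bot}$, so $x=0$), hence $U^{\bot}$ is a symplectic superspace to which the inductive hypothesis applies.

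For the isotropic case, let $\{w_1,\ldots,w_d\}$ be a homogeneous basis of $W$; isotropy gives $(w_i,w_j)=0$ for all $i,j$. The key step is to produce a single homogeneous vector $\alpha_{1'}$ that is dual to $w_1$ and orthogonal to the remaining basis vectors, i.e. $(w_1,\alpha_{1'})=1$ and $(w_j,\alpha_{1'})=0$ for $j\geq 2$. This is possible because the functionals $\phi_i=(w_i,-)$ are linearly independent (nondegeneracy of $(-,-)$ together with linear independence of the $w_i$), so the map $v\mapsto((w_1,v),\ldots,(w_d,v))$ is surjective onto $\mathbb{F}^d$; oddness of the form lets me take $\alpha_{1'}$ homogeneous of parity $|w_1|+\bar 1$, for which the conditions coming from basis vectors of parity different from $|w_1|$ hold automatically. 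Setting $U=\mathrm{span}_{\mathbb{F}}\{w_1,\alpha_{1'}\}$, a two-dimensional symplectic subspace, I then observe that $w_2,\ldots,w_d$ all lie in $U^{\bot}$: indeed $(w_j,w_1)=0$ by isotropy and $(w_j,\alpha_{1'})=0$ by the choice of $\alpha_{1'}$. Thus $W'=\mathrm{span}_{\mathbb{F}}\{w_2,\ldots,w_d\}$ is an isotropic subspace of the symplectic superspace $U^{\bot}$ of dimension $d-1$, and by induction its basis extends to a homogeneous symplectic basis of $U^{\bot}$. Adjoining the dual pair $w_1,\alpha_{1'}$, and using that $U$ and $U^{\bot}$ are mutually orthogonal, the resulting set satisfies the symplectic pairing relations (A.3) after relabeling, hence is a homogeneous symplectic basis of $V$ containing every original vector $w_1,\ldots,w_d$.

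For the symplectic case the same peeling works: starting from a homogeneous symplectic basis of $W$, I pick a dual pair $w_1,w_{1'}$ with $(w_1,w_{1'})=1$, which spans a symplectic plane $U$; relation (A.3) for the basis of $W$ guarantees that the remaining basis vectors lie in $U^{\bot}$ and still form a homogeneous symplectic basis of a subspace $W'\subset U^{\bot}$, so induction applies verbatim. Alternatively, $W$ symplectic gives $V=W\oplus W^{\bot}$ directly, and Corollary \ref{th:001} supplies a homogeneous symplectic basis of the symplectic superspace $W^{\bot}$, whose union with the given basis of $W$ is the desired basis of $V$.

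The main obstacle is the construction of the dual vector $\alpha_{1'}$ in the isotropic case, which must be chosen simultaneously homogeneous, normalized against $w_1$, and orthogonal to all other $w_j$. The delicate point is the parity bookkeeping: one verifies that only the basis vectors sharing the parity of $w_1$ impose nontrivial conditions, and that nondegeneracy of the odd form restricts to a perfect pairing between the even and odd components, so the required linear system is solvable within the parity-$(|w_1|+\bar 1)$ subspace. Once $\alpha_{1'}$ is in hand, the reduction to $U^{\bot}$ and the final application of Corollary \ref{th:001} are routine.
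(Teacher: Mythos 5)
Your proof is correct, but it is organized differently from the paper's. The paper argues only the Lagrangian case $W=W^{\bot}$ in detail: for each basis vector $\alpha_r$ it produces a dual vector $\beta_r\in W_r^{\bot}\setminus W$ (where $W_r$ is the span of the remaining basis vectors, so that $W=W^{\bot}\subsetneq W_r^{\bot}$), and then performs an explicit Gram--Schmidt-type correction $\alpha_{i_k'}=\beta_{i_k}-\sum_l(\beta_{j_l},\beta_{i_k})\alpha_{j_l}$ to make the dual family pairwise orthogonal; the general isotropic and symplectic cases are then dismissed as obvious. You instead induct on $\dim W$, splitting off a single hyperbolic plane $U=\mathrm{span}_{\mathbb F}\{w_1,\alpha_{1'}\}$ at each step and passing to $U^{\bot}$, which already contains $w_2,\ldots,w_d$; this removes the need for any orthogonalization step (orthogonality to the previously chosen pairs is automatic once you work inside $U^{\bot}$) and treats non-maximal isotropic subspaces and symplectic subspaces uniformly, at the mild cost of checking at each step that the form restricted to $U^{\bot}$ is again odd and nondegenerate, which you do. Your parity bookkeeping for $\alpha_{1'}$ --- solve the linear system given by the independent functionals $(w_i,-)$ and project onto the component of parity $|w_1|+\bar 1$, noting that the conditions coming from $w_j$ of the opposite parity hold automatically because the form is odd --- is sound, and is the one point the paper's construction of $\beta_r$ also relies on implicitly. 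One caveat that you handle and the paper does not make explicit: for a symplectic subspace $W$ the statement is only true verbatim when the given basis is a homogeneous \emph{symplectic} basis of $W$ (an arbitrary homogeneous basis need not satisfy the relations (A.3) within $W$ and so cannot be completed); your reading is the correct one, and the only place the lemma is invoked in the paper is for an isotropic subspace, where any homogeneous basis qualifies.
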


\begin{proof}
First we show that the lemma holds for $W= W^{\bot}$. In fact,
let $\mathrm{sdim}W=(s, t)$  and
$$
\{\alpha_{i_{1}}, \ldots, \alpha_{i_{s}}\mid \alpha_{j_{1}}, \ldots, \alpha_{j_{t}}\}
$$ be a homogeneous basis of $W$.
Put
$$
W_r = \mathrm{span}_{\mathbb{F}}\Big\{\{\alpha_{i_{1}}, \ldots, \alpha_{i_{s}}\mid\alpha_{j_{1}}, \ldots, \alpha_{j_{t}}\}\backslash \{\alpha_{r}\}\Big\},
$$
for any $r\in\Gamma=\{i_{1},\ldots,  i_{s}, j_{1}, \ldots, j_{t} \}$.
Since $W_r \subsetneq W$, we have $W = W^{\bot} \subsetneq W_r^{\bot}$.
Thus there is $\beta_{r} \in W_r^{\bot}\setminus W$.
We can assume that $\beta_{r}$ is a homogeneous element of $V$
such that $(\beta_{r}, \alpha_{r})=1$, since $\beta_{r}\notin W$.
Obviously $(\alpha_{r_{1}}, \beta_{r})=0$,
for any $r_{1}\in \Gamma \backslash \{r\} $.
Hence,
there is a basis of $W_r$
$$
\{\alpha_{i_{1}}, \ldots, \alpha_{i_{s}},\beta_{j_{1}}, \ldots, \beta_{j_{t}}\mid
\beta_{i_{1}}, \ldots, \beta_{i_{s}},\alpha_{j_{1}}, \ldots, \alpha_{j_{t}}\}
$$
such that for $i,\ j\in\{i_1,\ldots,i_s, j_1,\ldots,j_t\}$,
$$(\alpha_i, \beta_j)=
\begin{cases}
1, &  i=j,\\
0, &  i\neq j.
\end{cases}$$
For  $k\in \{1,\ldots,s\}$, $m\in \{1,\ldots,t\}$,
set
$$
\alpha_{i_{k}'}=\beta_{i_{k}}-\sum^{t}_{l=1}(\beta_{j_{l}},\beta_{i_{k}})\alpha_{j_{l}},\
\alpha_{j_{m}'}=\beta_{j_{k}}-\sum^{s}_{l=1}(\beta_{i_{l}},\beta_{i_{m}})\alpha_{i_{l}}.
$$
A short calculation shows that
$$\{\alpha_{i_{1}}, \ldots, \alpha_{i_{s}},\alpha_{j_{1}}', \ldots, \alpha_{j_{t}}'
 \mid \alpha_{i_{1}}', \ldots, \alpha_{i_{s}}',\alpha_{j_{1}}, \ldots, \alpha_{j_{t}}\}
$$
is a homogeneous symplectic basis of $V$.
The reminder is obvious  in view of the fact above.
\end{proof}

\end{document}